\newenvironment{itemizeequation}%
 {\par\noindent\minipage[r]{\textwidth-3.5em}\csname equation*\endcsname}%
 {\vspace{.5em}\csname endequation*\endcsname\endminipage\par\noindent}
\newtheoremstyle{mytheorem}{3pt}{}{\itshape}{}{\bfseries}{\nopagebreak\newline}{.5em}{}%
\newtheoremstyle{mydefinition}{3pt}{}{}{}{\bfseries}{\nopagebreak\newline}{.5em}{}%
\gdef\mytheorem{\theoremstyle{mytheorem}}%													%
\gdef\mydefinition{\theoremstyle{mydefinition}}%										%
\gdef\mytheoremcounter{theorem}%																		%
\newtheorem{theorem}{Theorem}[section]%															%
\newtheorem{corollary}[\mytheoremcounter]{Corollary}%								%
\newtheorem{lemma}[\mytheoremcounter]{Lemma}%												%
\newtheorem{proposition}[\mytheoremcounter]{Proposition}%						%	
\newtheorem{assumptions}[\mytheoremcounter]{Assumptions}%						%
\newtheorem{definition}[\mytheoremcounter]{Definition}%							%
\newtheorem{notation}[\mytheoremcounter]{Notation}%									%
\theoremstyle{remark}%																							%
\newtheorem{remark}[\mytheoremcounter]{Remark}%											%
\NewDocumentCommand\indexsmall{m}{\@ifnextchar\egroup{#1}{\indexsmallsnd{#1}}}
\NewDocumentCommand\indexsmallsnd{mg}{\IfValueTF{#2}{#1{#2}}{\indexsmallthd{#1}}}
\NewDocumentCommand\indexsmallthd{mm}{\expandafter\indexsmallfth\expandafter#1#2}
\NewDocumentCommand\indexsmallfth{mg}{\IfValueTF{#2}{#1{#2}}{\indexsmallfith{#1}}}
\NewDocumentCommand\indexsmallfith{mm}{#1\ifx\indexsmall#2\hspace{-.075em}\fi#2}
\gdef\ii{\indexsmall{I}}\gdef\ij{\indexsmall{J}}\gdef\ik{\indexsmall{K}}\gdef\il{\indexsmall{L}}
\gdef\oi{i}\gdef\oj{j}
\gdef\ui{\alpha}
\NewDocumentCommand\outsymbolsmall{om}{\bar{\IfValueTF{#1}{#1{#2}}{#2}}}
\NewDocumentCommand\outsymbol{om}{\overline{\IfValueTF{#1}{#1{#2}}{#2}}}
\NewDocumentCommand\unisymbol{om}{\widehat{\IfValueTF{#1}{#1{#2}}{#2}}}
\NewDocumentCommand\outtensor{d()}{\IndexSymbol(\outsymbol[#1])}
\newdimen\middle@width
\newcommand*\phantomas[3][c]{\ifmmode\makebox[\widthof{$#2$}][#1]{$#3$}\else\makebox[\widthof{#2}][#1]{#3}\fi}
\NewDocumentCommand\tracefree{m}%
{\setlength\middle@width{\widthof{\ensuremath{#1}}}%
 #1\hskip-\middle@width%
 \phantomas[r]{#1}{\vphantom{\ensuremath{#1}}^{\,\!^\circ}}}%
\NewDocumentCommand\mean{om}%
{\IfValueTF{#1}%
 {\setlength\middle@width{\widthof{\ensuremath{#1#2}}}%
  \phantomas{#1#2}{{#1\hspace{-.1em}\textbf{\small$\boldsymbol\backslash$}\!}}%
  \hskip-\middle@width#1#2}%
 {\mathchoice{\mean[\displaystyle]{#2}}{\mean[\textstyle]{#2}}{\mean[\scriptstyle]{#2}}{\mean[\scriptscriptstyle]{#2}}}}
\gdef\meanH{\hspace{-.08em}\mean{\hspace{.08em}\H}}%
\NewTensor*\mass{\MakeSymbol[\outsymbol m]<{\outsymbol m}>{\vvecmass{\outsymbol m}}}
\def\totalmass{\mass*}
\let\oldPhi\Phi\RenewTensor\Phi\oldPhi
\let\oldvarphi\varphi\RenewTensor*\varphi\oldvarphi
\let\oldPsi\Psi\RenewTensor*\Psi<\!>\oldPsi
\NewDocumentCommand\intervalI{s}{\IfBooleanTF{#1}I{\ensuremath{\intervalI*}\xspace}}
\NewDocumentCommand\intervalJ{s}{\IfBooleanTF{#1}J{\ensuremath{\intervalJ*}\xspace}}
\gdef\hyperbolich{{\mathcal h}}
\let\AdSa\antidesittera
\gdef\referencer{r}
\gdef\houtg{\outg[\hyperbolich]}
\gdef\hcenterz{\centerz[\hyperbolich]}
\gdef\houtcenterz{\outcenterz[\hyperbolich]}
\gdef\houtsc{\outsc[\hyperbolich]}
\gdef\hg{\g[\hyperbolich\,]}
\gdef\houttr{\outtr[\hyperbolich\,]}
\gdef\htr{\tr[\hyperbolich]}
\gdef\hH{\H[\hyperbolich]}
\gdef\hzFund{\zFund[\hyperbolich]}
\gdef\hzFundtrf{\zFundtrf[\hyperbolich\hspace{.1em}]}
\gdef\houtric{\outric[\hyperbolich]}
\gdef\houtlevi{\outlevi[\hyperbolich]}
\gdef\houtdiv{\outdiv[\hyperbolich]}
\gdef\houtHess{\outHess[\hyperbolich\,]}
\gdef\hlevi{\levi[\hyperbolich]}
\gdef\sinh{\sinhsnd{sinh}}\gdef\cosh{\sinhsnd{cosh}}
\NewDocumentCommand\sinhsnd{mg}{\IfValueTF{#2}{\operatorname{\text{\normalfont #1}(#2)}}{\sinhthd{#1}}}
\NewDocumentCommand\sinhthd{md()}{\IfValueTF{#2}{\sinhsnd{#1}{#2}}{\operatorname{\text{\normalfont #1}}}}
\gdef\hmug{\mug[\hyperbolich]}
\gdef\houtmug{\outmug[\hyperbolich\hspace{.05em}]}
\gdef\hnu{\nu[\hyperbolich]}
\gdef\refoutg{\outg[\referencer]}
\gdef\refoutsc{\outsc[\referencer]}
\gdef\AdSoutg{\outg[\totalmass]}
\gdef\AdSoutric{\outric[\totalmass]}
\gdef\AdSoutsc{\outsc[\totalmass]}
\gdef\sphtr{\tr[\sphg*\hspace{.05em}]}
\gdef\sphmug{\mug[\hspace{.05em}\sphg*\hspace{.05em}]}
\gdef\euclideane{e}
\gdef\eukoutg{\outg[\euclideane]}
\gdef\eukoutlevi{\outlevi[\euclideane]}
\gdef\eukg{\g[\euclideane\hspace{.05em}]}
\gdef\eukH{\H[\euclideane]}
\gdef\eukzFund{\zFund[\euclideane]}
\gdef\eukzFundtrf{\zFundtrf[\euclideane\hspace{.08em}]}
\NewDocumentCommand\rad{t@}{\IfBooleanTF{#1}\radsnd{\radsnd\outx}}
\gdef\radsnd#1{|#1|}
\gdef\atime{\tau}
\NewDocumentCommand\volume{od<>om}
{\IfValueTF{#3}{\volume[#3]<#2>{#4}}%
 {\IfValueTF{#1}%
  {\vphantom{\left|#1\right|}^{#1}\IfValueTF{#2}{_{#2}}\relax\hspace{-.25em}}%
	{\IfValueTF{#2}{\vphantom{\left|#1\right|}_{#2}\hspace{-.25em}}\relax}\left|#4\right|}}%
\gdef\hvolume{\volume[\hyperbolich]}
\gdef\eukvolume{\volume[\euclideane]}
\gdef\lieD#1#2{\mathfrak L_{#1}#2}
\NewDocumentCommand\c{sG{c}}{\IfBooleanTF{#1}{#2}{\IndexSymbol*{#2}}}
\gdef\cSob{\c{\c_S}}
\NewDocumentCommand\Cof{G{C}d<>d()}
{\IfValueTF{#3}{\Cof{#1}<\IfValueTF{#2}{#2,}\relax{#3}>}{\CofSnd{#1\IfValueTF{#2}{[#2]}\relax}}}
\NewDocumentCommand\CofSnd{G{C}d<>o}
{\IfValueTF{#3}{\CofSnd{#1}<\IfValueTF{#2}{#2,}\relax{#3}>}{\mathop{#1\IfValueTF{#2}{(#2)}\relax}}}
\NewTensor\M[\hspace{-.05em}]<\hspace{-.05em}>{{\normalfont\Sigma}}
\global\let\graphfsymbol\varsigma
\NewTensor*\graphf[\hspace{-.05em}]\graphfsymbol
\let\graphft s
\gdef\fX{f_{\hspace{-.08em}X}}
\gdef\fXsolX{{\fX^\solX}}
\let\solX\alpha
\NewTensor\graphF[\!]S
\let\graphFt S
\NewTensor\graphpushforward[\!]{\pushforward{S}\hspace{-.05em}}
\NewTensor\sphg{\Omega\vphantom{\Omega}}
\newcommand\sphlaplace{\laplace[\sphg*]}
\newcommand\sphlevi{\levi[\sphg*]}
\gdef\sphHess{\Hess[\sphg\,]}
\NewTensor*\rnu[\!]u
\NewTensor*\lapse[\!]u
\NewTensor\rbeta[\!]\beta
\metric\NewTensor[\newmathcal]\metric[\hspace{-.15em}]<\hspace{.05em}>g
\let\g\metric
\let\outg\outmetric
\gdef\trmassaspect{\sphtr\hspace{.08em}\massaspect}
\let\unig\unimetric
\let\rc\riem
\let\outrc\outriem
\let\ric\ricci
\let\outric\outricci
\let\outsc\outscalar
\let\zFund\k
\let\zFundtrf\ktrf
\let\outzFund\outk
\NewDocumentCommand\troutzFund{D<>{}O{}}{\tr<#1>[#2]\hspace{.05em}\outzFund[#2]}
\NewDocumentCommand\outzFundnu{D<>{}O{}}{\mathop{{\outzFund[#2]_{\nu<#1>[#2]}}}}
\NewDocumentCommand\outzFundnunu{D<>{}O{}}{\mathop{{\outzFund[#2]_{\nu<#1>[#2]\nu<#1>[#2]}}}}
\let\H\mc
\let\oldnu\nu
\NewTensor*\normal[\!]\oldnu
\let\nu\normal
\NewTensor*\laplace[\!]\Delta
\NewTensor\Hess[\!]{\text{Hess}}
\NewTensor\Hesstrf[\!]{\text H\tracefree{\text{es}}\text s}
\NewTensor\div[\hspace{-.05em}]{\text{div}}
\NewTensor*\tr[\hspace{0em}]{\text{tr}}
\NewTensor*\vol{\text{vol}}
\NewTensor*\mug[\!]\mu
\NewDocumentCommand\trzd{smm}{{#2}\odot{#3}}
\NewDocumentCommand\htrzd{smm}{{#2}{}\hspace{.15em}^\hyperbolich\hspace{-.32em}\odot{#3}}
\NewDocumentCommand\trtr{smm}
{\ifthenelse{\equal{#2}{#3}}{\left|#2\right|_{\hspace{-.05em}\g*}^2}{\tr(\trzd{#2}{#3})}}
\NewDocumentCommand\htrtr{smm}
{\ifthenelse{\equal{#2}{#3}}{\left|#2\right|_{\hg*}^2}{\htr(\htrzd{#2}{#3})}}
\NewDocumentCommand\outc{G{c}}{\IndexSymbol(\outsymbol)*{#1}}\let\oc\outc
\NewDocumentCommand\outck{d<>o}{\outc<#1>[#2]_{\outzFund*}\vphantom{\outc_{\outzFund*}}}
\gdef\outtr{\IndexSymbol(\outsymbol)[\!]{\text{tr}}}
\NewDocumentCommand\outtrzd{D<>{}O{}mm}
{\mathop{{#3\IndexSymbol(\outsymbol)[#2]<#1>\odot{#4}}}}
\NewDocumentCommand\outtrtr{D<>{}O{}mm}
{\mathop{{\ifthenelse{\equal{#3}{#4}}%
 {{\left|#3\right|_{\outg<#1>[#2]}^2}}%
 {\mathop{{\outtr<#1>[#2](\outtrzd<#1>[#2]{#3}{#4})}}}}}}%
\NewDocumentCommand\houttrzd{mm}
{\mathop{#1\mathop{\hspace{.05em}^{\hyperbolich}\hspace{-.05em}\outsymbol\odot}#2}}
\NewDocumentCommand\houttrtr{mm}
{\ifthenelse{\equal{#1}{#2}}%
 {\left|#1\right|_{\houtg*}^2}%
 {\mathop{\houttr(\houttrzd{#1}{#2})}}}%
\NewTensor*\eflap[\!]f
\NewTensor*\ewlap[\!]\lambda
\NewTensor*\efjac[\!]{\mathcal f}
\NewTensor*\ewjac[\hspace{-.05em}]<\!>\kappa
\NewTensor*\funcg[\!]g
\NewTensor*\funch[\!]h
\gdef\boostLp^#1(#2){\Lp^{#1}(#2)\hspace{-.05em}\boost{\vphantom{#2}}}
\gdef\deformLp^#1(#2){\Lp^{#1}(#2)\hspace{-.05em}\deform{\vphantom{#2}}}
\NewDocumentCommand\normof{d<>od<>m}
{\IfValueTF{#1}%
 {\IfValueTF{#2}%
  {\IfValueTF{#3}{\normofleft<#1#3>[#2]{#4}}{\normofleft<#1>[#2]{#4}}}%
  {\IfValueTF{#3}{\normofleft<#1#3>{#4}}{\normofleft<#1>{#4}}}}%
 {\IfValueTF{#2}%
  {\IfValueTF{#3}{\normofleft<#3>[#2]{#4}}{\normofleft[#2]{#4}}}%
  {\IfValueTF{#3}{\normofleft<#3>{#4}}\relax}}%
 \lvert#4\rvert}
\NewDocumentCommand\normofleft{d<>om}%
{\vphantom{\lvert#3\rvert}\IfValueTF{#1}{_{#1}}\relax\IfValueTF{#2}{^{#2}}\relax}
\gdef\sphnormof{\normof[\sphg*]}
\let\Hradius\sigma
\let\Aradius R
\gdef\eukAradius{{\vphantom{R}^{\euclideane\hspace{-.2em}}R}}
\def\rradius{{\underline r}}
\def\Rradius{{\overline r}}
\NewTensor\mHaw{{\normalfont m_{\text{H}}}}
\NewTensor\HmHaw{{\normalfont m_{\text{H}}^{\hspace{-.05em}\text h}}}
\NewTensor\HmInt{{\normalfont m_{\text{I}}^{\text h}}}
\NewDocumentCommand\d{s}{\IfBooleanTF{#1}\relax{\mathop{}\!}\mathrm d}
\newcommand\dr{{\d r}}
\newcommand\pullback[1]{{#1}^*}
\newcommand\pushforward[1]{{#1}_*}
\DeclareMathOperator\id{id}
\DeclareMathOperator\graph{graph}%
\DeclareMathOperator\hgraph{\vphantom{h}^{\hyperbolich\hspace{-.05em}}graph}
\newcommand\R{\mathds{R}}
\newcommand\N{\mathds{N}}
\newcommand\X{\mathfrak{X}}
\newcommand\Lp{{\normalfont\textrm L}}
\newcommand\Wkp{{\normalfont\textrm W}}
\newcommand\Hk{{\normalfont\textrm H}}
\newcommand\Ck{\mathcal C}
\NewDocumentCommand\sphere{t^}{\mathds S\IfBooleanTF{#1}^{^2}}
\gdef\hsphere{\vphantom{S}^\hyperbolich\sphere}
\gdef\euksphere{\vphantom{S}^{\euclideane\hspace{-.05em}}\sphere}
\gdef\sphball{\vphantom{B}^{\sphg*}\!B}
\gdef\eukball{\vphantom{B}^{\euclideane}\!B}
\gdef\hball{\vphantom{B}^{\hyperbolich}\!B}
\NewDocumentCommand\hyperbolicspace{t^}{\IfBooleanTF{#1}{\mathbbm H^}{\hyperbolicspace^3}}%
\newcommand\hyperbolicisometry{\boldsymbol\gamma}
\NewDocumentCommand\HISO{t_}{\boldsymbol\Gamma\IfBooleanTF{#1}{\hspace*{-.2em}_}\relax}
\let\ve\varepsilon
\DeclareMathAlphabet{\mathcal}{OT1}{pzc}{m}{n}
\let\newmathcal\mathcal
\def\my@overarrow@#1#2#3{\vbox{\ialign{##\crcr#1#2\crcr\noalign{\kern-2.75\p@\nointerlineskip}$\m@th\hfil#2#3\hfil$\crcr}}}
\gdef\vvecmass{%
 \mathpalette{\my@overarrow@\vectfillb@}}
\newcommand{\vecbar}{%
  \scalebox{1}{$\relbar$}}
\def\vectfillb@{\arrowfill@\vecbar\vecbar{\raisebox{-3.65pt}[\p@][\p@]{$\mathord\mathchar"017E$}}}
\NewDocumentCommand\Cka{mt^}{\IfBooleanTF{#2}{\Ckasnd{#1}^}{\Ckasnd{#1}\relax\relax}}
\NewDocumentCommand\Ckasnd{mmmt_}{\IfBooleanTF{#4}{\Ckathd{#1}#2{#3}_}{\Ckathd{#1}#2{#3}\relax\relax}}
\NewDocumentCommand\Ckathd{mmmmmt^}{\IfBooleanTF{#6}{\Ckafth{#1}#2{#3}#4{#5}^}{\Ckafth{#1}#2{#3}#4{#5}\relax\relax}}
\NewDocumentCommand\Ckafth{mmmmmmm}{\short{$\Ck#2{#3}\vphantom{\Ck}#4{#5}\vphantom{\Ck}#6{#7}$-asymp\-to\-tic\-ally #1}}
\gdef\Ckah{\Cka{hyperbolic}}
\gdef\Ckae{\Cka{Euclidean}}
\NewDocumentCommand\short{ms}{#1\IfBooleanTF{#2}\relax\xspace}
\NewDocumentCommand\functionpart{mod()om}%
{{#5}^{\IfValueTF{#2}{\hspace{-.05em}\IndexSymbol[#2]<#3>{#1}\hspace{-.05em}}{\IfValueTF{#4}{\hspace{-.05em}}\relax\IndexSymbol[#4]<#3>{#1}\IfValueTF{#4}{\hspace{-.05em}}\relax}}}
\gdef\boost{\functionpart{\text{\normalfont b}}}
\gdef\deform{\functionpart{\text{\normalfont d}}}
\let\expandafter\dddd\csname'\endcsname
\gdef\csname'\endcsname{\ifmmode\hspace{.05em}\else\expandafter\dddd\fi}
\title[CMC-Foliations for asymptotically hyperbolic manifolds]{Existence and uniqueness of\\constant mean curvature foliations of\\general asymptotically hyperbolic $\boldsymbol3$-manifolds}
 \author[Christopher Nerz]{Christopher Nerz}
 \address{Department of mathematics\\Royal institut of Technology KTH\\Stockholm\\Sweden}
 \email{ncroman@kth.se}
 \date\today
\gdef\ch{{\operatorname{ch}}}
\gdef\sh{{\operatorname{sh}}}
\gdef\Oof@switch#1#2#3{{#1}{#3}{#2}}
\NewDocumentCommand\Oof{o}{\IfValueTF{#1}{\Oof@snd{O}[#1]}{\Oof@upper{O}{#1}{#1}}}
\NewDocumentCommand\oof{o}{\IfValueTF{#1}{\Oof@snd{o}[#1]}{\Oof@upper{o}{#1}{#1}}}
\gdef\Oof@precheck#1#2#3{\IfValueTF{#2}{\IfValueTF{#3}{\Oof@snd{#1}{#2}{#3}}{\Oof@lower{#1}{#2}{#3}}}{\Oof@upper{#1}{#2}{#3}}}
\NewDocumentCommand\Oof@upper{mmmt^}{\IfBooleanTF{#4}{\expandafter\Oof@precheck\Oof@switch{#1}{#2}}{\Oof@lower{#1}{#2}{#3}}}
\NewDocumentCommand\Oof@lower{mmmt_}{\IfBooleanTF{#4}{\Oof@precheck{#1}{#2}}{\Oof@snd{#1}{#2}{#3}}}
\NewDocumentCommand\Oof@snd{mmmd()}{\IfValueTF{#4}{\Oof@thd{#1}{#2}{#3}{#4}}{\Oof@thd{#1}{#2}{#3}}}
\NewDocumentCommand\Oof@thd{mmmg}{\mathcal#1\IfValueTF{#2}{^{#2}}\relax\IfValueTF{#3}{_{#3}}{_0}\IfValueTF{#4}{(#4)}\relax}
\begin{document}
\begin{abstract}
In 1996, Huisen-Yau proved that every three-dimensional, asymptotically Schwarzschilden manifold with positive mass is uniquely foliated by stable spheres of constant mean curvature and they defined the \emph{center of mass} using  this \emph{CMC-foliation}. Rigger and Neves-Tian showed in 2004 and 2009/10 analogous existence and uniqueness theorems for three-dimensional, asymptotically \emph{Anti-de Sitter} and asymptotically \emph{hyperbolic} manifolds with positive mass aspect function, respectively. Last year, Cederbaum-Cortier-Sakovich proved that the CMC-foliation characterizes the center of mass in the hyperbolic setting, too. In this article, the existence and the uniqueness of the CMC-foliation are further generalized to the wider class of asymptotically hyperbolic manifolds which do not necessarily have a well-defined mass aspect function, but only a timelike mass vector. Furthermore, we prove that the CMC-foliation also characterizes the center of mass in this more general setting.
\end{abstract}
\maketitle
\let\sc\scalar
\let\oldexp\exp
\undef\exp
\NewDocumentCommand\exp{s}{\IfBooleanTF{#1}\oldexp\expsnd}
\NewDocumentCommand\expsnd{d()}{\IfNoValueTF{#1}\expthd{e^{#1}}}
\NewDocumentCommand\expthd{t-m}{\IfBooleanTF{#1}{\exp({-}#2)}{\exp(#2)}}
\gdef\Eoutric{\outsymbol{\mathcal E}_{\bar{\mathcal R}}}
\gdef\Eoutsc{\outsymbol{\mathcal E}_{\bar{\mathcal S}}}
\gdef\ie{\hbox{i.e.}\nopagebreak\xspace}
\gdef\eg{\hbox{e.g.}\nopagebreak\xspace}
\gdef\etc{\hbox{etc.}\nopagebreak\xspace}
\gdef\resp{\hbox{resp.}\nopagebreak\xspace}
\gdef\thesubsection{\thesection.\alph{subsection}}
\section{Introduction}\bgroup\def\thetheorem{\arabic{theorem}}
Huisken-Yau proved 1996 that manifolds which are asymptotic to the spatial Schwarzschild metric with positive mass possess a foliation by stable constant mean curvature (CMC) hypersurfaces, \cite{huisken_yau_foliation}. They used this foliation as a definition for the center of mass of the manifold and also gave a coordinated version of this center. Since then, this foliation proved to be a suitable tool for the study of asymptotically Euclidean (\ie asymptotically flat Riemannian) manifolds and several generalizations of Huisken-Yau's result were made, \eg by Metzger, Huang, Eich\-mair-Metzger, and the author, \cite{metzger2007foliations,Huang__Foliations_by_Stable_Spheres_with_Constant_Mean_Curvature,metzger_eichmair_2012_unique,nerz2015CMCfoliation,nerz2015GeometricCharac}.
In 2004, Rigger used Huisken-Yau's method---the mean curvature flow---to prove the existence and uniqueness of such a foliation for manifolds asymptotic to the spatial Anti-de Sitter-Schwarzschild solution, \cite{rigger2004foliation}. This result was generalized by Neves-Tian and Chodosh, \cite{NevesTianExistenceCMC_I,NevesTianExistenceCMC_II,chodosh2014large}.

In this article, we generalize the existence and uniqueness to the setting of asymptotically hyperbolic manifolds for which the full non-constant part of the curvature is unknown. In particular, the mass aspect tensor of these manifolds is \emph{not} necessarily well-defined, as it was assumed by all results mentioned.\footnote{As explained in Subsection~\ref{ResultsSoFar} Chodosh did not use the mass aspect tensor to prove existence, \ie convergence of the $\sinh(\rad)^{-3}$-term of the metric, but only its boundedness. However his proof of uniqueness of the foliation needs the strong assumptions that the manifold is a compact deformation of the Anti-de Sitter-Schwarzschild spatial solution.} Our method of proof furthermore generalizes the result to manifolds with past-pointing mass vector. Furthermore, we prove uniqueness of the CMC-leaves in a wide class of surfaces which was previously used by Neves-Tian in the restrictive case of a manifold asymptotic to the spatial Anti-de Sitter-Schwarzschild solution. The combination of our existence and uniqueness proof also implies that the CMC-foliation is stable under perturbation of the metric.

As an additional result, we show that the center of mass defined by Cederbaum-Cortier-Sakovich is in our general setting characterized by the CMC-foliation, too---as Cederbaum-Cortier-Sakovich proved in the setting of a well-defined mass aspect tensor, \cite{cederbaum2015center}. Combined with \cite[Thm~5.1]{cederbaum2015center}, this characterizes also the evolution of the CMC-foliation in time, see Remark~\ref{EvolutionInTime}.

Finally, we prove a new regularity theorem for large, almost umbilic hypersurfaces in the hyperbolic space.

\subsection{The main results}
Let us now state the main results in a simplified version---for the full theorems see Theorems~\ref{Regularity_Theorem}, \ref{Existence_Theorem_full}, \ref{Uniqueness_Theorem_full}, \ref{Regularity_Theorem_full}, \ref{Stability_Theorem_full}, and~\ref{W2pSurfaceRegularity}. The meaning of the assumed decay rates are discussed in Subsection~\ref{DecayRates} and the corresponding formal definitions are given in Definitions~\ref{Ck_asymptotically_hb} and~\ref{Definition_Mass}. In Subsection~\ref{ResultsSoFar}, we compare our results with the previous ones.
\begin{theorem}[{Existence of the CMC-foliation, see Theorem~\ref{Existence_Theorem_full}}]\label{Existence_Theorem}
Let $(\outM,\outg*)$ be a $\Ck^2_{\decay,\scdecay}$-asymptotically hyperbolic manifold with decay rates $\decay>\frac52$ and $\scdecay>3$. If the mass vector is timelike, then $\outM$ (outside of a compact set) is foliated by surfaces $\M<\Hradius>$ of constant mean curvature $\H<\Hradius>\equiv{-}2\:\frac{\cosh(\Hradius)}{\sinh(\Hradius)}$. Furthermore, the hyperbolic coordinate center of $\M<\Hradius>$ converges to the hyperbolic center of mass.
\end{theorem}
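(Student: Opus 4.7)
The plan is to realize each prospective CMC-leaf $\M<\Hradius>$ as a normal graph $\graphf_\Hradius\colon\sphere^2\to\R$ over the reference hyperbolic geodesic sphere of radius $\Hradius$, and to solve the nonlinear equation prescribing the mean curvature to equal $-2\cosh(\Hradius)/\sinh(\Hradius)$ by a quantitative implicit function argument uniform in $\Hradius$. First I would compute the linearization at a coordinate sphere, which is the Jacobi operator $\jacobiext$. In the reference hyperbolic space this operator on the sphere of radius $\Hradius$ is explicit and has a three-dimensional near-kernel corresponding to the hyperbolic translations, with the remaining spectrum uniformly bounded away from zero as $\Hradius\to\infty$. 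The smallness of the $\Ck^2_{\decay,\scdecay}$-perturbation then lets me invert $\jacobiext$ on the $\Lp^2$-orthogonal complement of this near-kernel, uniformly in the radius.

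Accordingly, I split $\graphf=\graphf^\perp+\graphf^{\mathrm{trans}}$ into the part orthogonal to the translation modes and a three-dimensional translation component. The $\perp$-part is obtained by a contraction mapping for any fixed translation, which reduces the problem to a finite-dimensional equation for $\graphf^{\mathrm{trans}}$. Testing the mean-curvature equation against the three translation eigenfunctions and integrating by parts, using the Gauss equation to rewrite the curvature term, produces a $3\times 3$ linear system whose leading-order contribution is precisely the mass vector of $(\outM,\outg*)$ coupled to the coordinate center. Timelikeness of the mass vector renders this system uniformly nondegenerate, so $\graphf^{\mathrm{trans}}$ is fixed uniquely; the same computation identifies the limit of the hyperbolic coordinate center of $\M<\Hradius>$ with the Cederbaum-Cortier-Sakovich hyperbolic center of mass.

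To upgrade the individual surfaces into an actual foliation, I would differentiate the implicit function solution in $\Hradius$: the associated lapse inherits strict positivity from the monotonicity of $\Hradius\mapsto -2\cosh(\Hradius)/\sinh(\Hradius)$ together with the spectral-gap estimate for $\jacobiext$, so the $\M<\Hradius>$ do not intersect and sweep out a neighbourhood of infinity. Any competing CMC surface of the correct mean curvature is \emph{a priori} close to a coordinate sphere by the almost-umbilic regularity result, Theorem~\ref{Regularity_Theorem}, which brings it back into the graphical regime and hence into the foliation.

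The principal obstacle is the finite-dimensional translation step. Without a well-defined mass aspect tensor one cannot simply average against explicit angular functions to read off the center, so all center-of-mass information must be squeezed out of the integrated mass vector alone. This is exactly where the decay rates $\decay>\frac52$ and $\scdecay>3$ are used at their sharpest: they are just strong enough to make the mass integrals absolutely convergent and to keep all error terms of strictly lower order than the $\sinh(\Hradius)^{-3}$ contribution carrying the mass. A past-pointing mass vector requires no additional ideas beyond tracking sign conventions, since only timelikeness enters the nondegeneracy step.
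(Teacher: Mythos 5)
Your overall framework is a Lyapunov--Schmidt reduction: split the graph function into a near-kernel part plus its orthogonal complement, invert the Jacobi operator on the complement, and solve a projected finite-dimensional equation. That is not the paper's method, and more importantly the step where your proposal breaks is exactly the finite-dimensional projection. The paper explains in Remark~\ref{Remark_NotControllingTheCenter} that, under decay rates $\decay\in(\frac52,3]$, the first non-constant term of $\outric$ can be of the same order $\exp(-\decay\rad)$ as the error term $\outg-\houtg$, so one cannot isolate the mass vector by testing the mean-curvature equation against angular modes the way Neves--Tian could at $\decay\ge4$ via the Kazdan--Warner identity. Your claim that the hypotheses ``keep all error terms of strictly lower order than the $\sinh(\Hradius)^{-3}$ contribution carrying the mass'' is precisely what fails here: $\exp(-\frac52\Hradius)\gg\exp(-3\Hradius)$, so the ``error'' is in general not subordinate to the mass term in the projected equation, and the $3\times3$ system is not readable off at leading order. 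The paper's existence proof therefore runs a continuity argument in an artificial time $\tau$ from the Anti-de Sitter--Schwarzschild reference metric $\AdSoutg$ (whose CMC leaves are explicit coordinate spheres) to $\outg*$, in balanced coordinates fixed from the start, and proves a~posteriori via Lemma~\ref{First_estimates_on_lapse} that the boost component of the lapse stays bounded along the path using Herzlich's Ricci formula for the mass; this propagated control replaces your projected equation and is what actually locates the center of the leaves.

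Two further points. First, the quantitative input needed even to close the contraction at fixed $\Hradius$ is the new hyperbolic $\Wkp^{2,p}$ regularity theorem for almost-umbilic surfaces, Theorem~\ref{W2pSurfaceRegularity}: the usual Poincar\'e/Sobolev bootstrap on $\zFundtrf$ implicitly assumes $\levi\zFundtrf$ decays one order faster than $\zFundtrf$, which is false in the hyperbolic setting (Remark~\ref{Expectation_on_k}) and costs a factor of $\sinh(\Hradius)$; you invoke the regularity result only for uniqueness, but without it your existence iteration does not close for $\decay<3$. Second, the near-kernel modes are \emph{linearized boosts}, not hyperbolic translations: unlike Euclidean translations, boosts of $\hyperbolicspace$ are nonlinear isometries and their lapse functions on geodesic spheres depend on the radius, a conceptual point the paper stresses in Remark~\ref{LinearizedBoosts} and which you must account for when differentiating the family in $\Hradius$ to verify the foliation property.
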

Here, we used the definition of hyperbolic coordinate center and hyperbolic center of mass by Cederbaum-Cortier-Sakovich, \cite{cederbaum2015center}, see Definition~\ref{center}.

\begin{theorem}[Uniqueness of the CMC-foliation, see Theorem~\ref{Uniqueness_Theorem_full}]\label{Uniqueness_Theorem}
Let $(\outM,\outg*)$ be a $\Ck^2_{\decay,\scdecay}$-asymptotically hyperbolic manifold with decay rates $\decay\in\interval{\frac52}*3$ and $\scdecay>3$. If the mass vector is timelike, then the CMC-surfaces constructed in  Theorem~\ref{Existence_Theorem} are unique within the class
\[ \mathcal M :=
 \lbrace \sphere\cong\M\hookrightarrow\outM \ \middle|\
  \begin{aligned}
		\rradius_0\le\max_{\M}\rad \le \zeta'\min_{\M}\rad,\quad \H(\M)\equiv{-}2\,\frac{\cosh(\Hradius)}{\sinh(\Hradius)}, \\
		\M\text{ \normalfont has }{-}(4-\eta)\,\sinh(\Hradius)^{{-}2}\text{\normalfont-controlled instability}, \\
		\c^{-1}\exp(2\Hradius)\le\volume{\M}\le\c\,\exp(2\Hradius)\text{ \normalfont or } \eta>2\end{aligned}\ \rbrace,
\]
for every $\c\ge0$, $\eta\in\interval0*4$, and $1\le\zeta'<\min\lbrace\frac{4\decay-5}{2\decay},\frac{\scdecay}3\rbrace$, where $\rradius_0:=\Cof{\rradius_0}[\decay][\scdecay][\c][\eta][\zeta']$ is finite.
\end{theorem}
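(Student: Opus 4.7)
The plan is to proceed by comparison: assuming the existence-leaf $\M^{Ex}_\Hradius$ of Theorem~\ref{Existence_Theorem} and an arbitrary competitor $\M \in \mathcal M$ with the same prescribed mean curvature $\H\equiv-2\,\cosh(\Hradius)/\sinh(\Hradius)$, I would show that the two surfaces coincide for $\Hradius \ge \rradius_0$ large enough. The first step is to reduce to a graph formulation. Applying the new regularity theorem (Theorem~\ref{Regularity_Theorem}) for large almost-umbilic hypersurfaces, together with the radial pinching $\max_\M \rad \le \zeta' \min_\M \rad$, the area control $\c^{-1}\exp(2\Hradius) \le \volume{\M} \le \c\exp(2\Hradius)$ (or its replacement by $\eta>2$), and the $\Ck^2_{\decay,\scdecay}$-asymptotic hyperbolicity, yields a well-defined hyperbolic coordinate center $\outcenterz(\M)$ and a small function $\graphf_\M \in \Ck^{2,\alpha}(\sphere)$ with $\M = \hgraph(\graphf_\M)$ over the hyperbolic coordinate sphere of radius $\Hradius$ around $\outcenterz(\M)$; the threshold $\rradius_0$ encodes the radius above which this representation is uniform across the class.

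After translating both $\M^{Ex}_\Hradius$ and $\M$ by a hyperbolic isometry so that they are hyperbolic graphs of $\graphf_{Ex}, \graphf_\M$ over a common coordinate sphere, the prescribed mean-curvature equation becomes a quasilinear elliptic PDE for each graph function, and the difference $v := \graphf_\M - \graphf_{Ex}$ satisfies a linearized equation of the form $L_\Hradius v = N(v, \levi v, \levi^2 v)$, where $L_\Hradius$ is a Jacobi-type operator obtained by linearizing the CMC-map at $\graphf_{Ex}$ and $N$ is a remainder quadratic in $v$ with coefficients bounded uniformly in the asymptotic-hyperbolic geometry.

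The decisive analytical step is an $\Lp^2$-coercivity estimate for $L_\Hradius$. On the unperturbed hyperbolic coordinate sphere, the stability operator has a four-dimensional near-kernel: one constant mode (with positive eigenvalue of order $\sinh(\Hradius)^{-2}$, pinned by the common $\H$-prescription) and three translation modes at $\ell = 1$ (eigenvalue zero). Under the decay rates $\decay > 5/2$ and $\scdecay > 3$ the metric perturbation shifts the $\ell = 1$ eigenvalues by a quantity whose leading part is computed from the mass vector, and the timelike assumption makes this contribution strictly negative of order $\sinh(\Hradius)^{-3}$, in the spirit of Cederbaum-Cortier-Sakovich. I would decompose $v$ into its constant, translation, and higher modes, use the freedom in choosing a common hyperbolic center to orthogonalize $v$ against the translation modes, and bound the higher modes via the controlled-instability hypothesis $-(4 - \eta)\sinh(\Hradius)^{-2}$ combined with the round-sphere Laplace spectrum. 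Feeding the resulting coercivity estimate into the energy identity for $L_\Hradius v = N$, and absorbing the nonlinear term by the smallness of $\graphf_{Ex}, \graphf_\M$ from the first step, one concludes $v \equiv 0$, hence $\M = \M^{Ex}_\Hradius$.

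The main obstacle is the spectral analysis of the translation modes: they form the kernel of the unperturbed Jacobi operator, so the uniqueness argument stands or falls with the simultaneous control of (i) the shift of the $\ell = 1$ eigenvalue by the timelike mass vector and (ii) the freedom to synchronize the hyperbolic coordinate centers of both surfaces. The second item is itself a contraction-type argument that must be set up so that its smallness constants are uniform across the entire admissible class $\mathcal M$—only then can the $\rradius_0$-threshold be chosen independently of $\c, \eta, \zeta'$ in the sense of the statement. The additional constraints $\zeta' < (4\decay - 5)/(2\decay)$ and $\zeta' < \scdecay/3$ appear precisely here, as they ensure that the perturbative error from the radial spread of the surface remains subleading compared with the mass-vector contribution.
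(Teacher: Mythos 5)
Your proposal takes a genuinely different route from the paper. You propose a \emph{direct graph-comparison} argument: write both the existence leaf $\M^{Ex}_\Hradius$ and the competitor $\M$ as hyperbolic graphs over a common geodesic sphere, compare the graph functions $v=\graphf_\M-\graphf_{Ex}$ via a linearized quasilinear PDE, and use a spectral coercivity estimate to conclude $v\equiv0$. The paper instead reuses its existence machinery as a \emph{continuity method run in reverse}: it replaces the boundary conditions ($\intervalI$-1)/($\intervalI$-3) with $1\in\intervalI$ and $\M[1]=\M$, deforms $\M$ backward through the convex family $\outg[\atime]=\atime\,\outg+(1-\atime)\,\AdSoutg$ to $\tau=0$, identifies the endpoint with the unique CMC sphere in the Anti--de Sitter--Schwarzschild metric (Brendle), and then invokes the uniqueness of the forward deformation (Lemma~\ref{I_is_a_neighborhood_of_J}, i.e.\ invertibility of the stability operator along the path) to conclude $\M=\M^{Ex}_\Hradius$. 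The continuity method thus avoids ever comparing $\M$ and $\M^{Ex}_\Hradius$ directly as graphs.

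The difference is not only stylistic; there is a genuine gap in your proposal precisely where you flag ``the main obstacle.'' Your plan requires, before the coercivity step can even be set up, that the hyperbolic centers of $\M$ and $\M^{Ex}_\Hradius$ be close enough that a single isometry makes both surfaces graphs over the same coordinate sphere with small graph functions. But the radial pinching $\max_\M\rad\le\zeta'\min_\M\rad$ with $\zeta'>1$ allows the center $p_1$ produced by Theorem~\ref{Regularity_Theorem} to lie at hyperbolic distance $O((\zeta'-1)\Hradius)$ from the origin, i.e.\ an \emph{unbounded} distance as $\Hradius\to\infty$; there is no a~priori reason why it should coincide with the center of mass (toward which the center of $\M^{Ex}_\Hradius$ converges). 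The ``contraction-type argument'' you invoke to synchronize the centers is exactly what fails at the weak decay rate $\decay>\frac52$: as the paper stresses in Remark~\ref{Remark_NotControllingTheCenter}, Neves-Tian's direct Kazdan--Warner control of the center is unavailable here because the non-constant part of $\outric$ and the error term $\outg-\houtg$ can be of the same order. The paper's deformation resolves this \emph{a~posteriori}: the lapse estimates of Lemma~\ref{First_estimates_on_lapse}, applied in balanced coordinates via Proposition~\ref{Stability}, bound the drift of the center along $\atime$, so the center control comes for free from the same spectral data you identified, but packaged through the continuity path rather than a one-shot comparison. To close your argument you would either have to reconstruct an equivalent center-control lemma from scratch or, in effect, rediscover the deformation.
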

The definition of \emph{controlled instability} is given in Definition~\ref{ControlledInstability}, but let us note here that it is a weaker assumption than stability.
In Remark~\ref{Compare_uniqueness}, we explain the differences between our uniqueness result and the ones by Neves-Tian.\smallskip

As Rigger and Neves-Tian, we also prove regularity estimates for the CMC-leaves, see below. Note that we prove an analogous theorem (not including the estimates on $\ewjac_i$) for a wide class of CMC-surfaces in $\Ck^2_{\decay}$-asymptotically hyperbolic manifolds with decay rate $\decay>2$ (not only $\decay>\frac52$)---if there exists such a CMC-surface for such a decay rate, see Theorem~\ref{Regularity_Theorem}. We explain in Remark~\ref{Expectation_on_k} why it is justified to expect that the regularity given here is optimal, \ie that for any decay rate between $\decay\in\interval2*3$ the regularity cannot be strengthened without additionally assumptions on $\outg$.
\begin{theorem}[Regularity of the CMC-leaves, see Theorems~\ref{Regularity_Theorem} and \ref{Regularity_Theorem_full}]\label{Regularity_Theorem_short}
Let $p\in\interval1\infty$ be a constant and $(\outM,\outg*)$ be a $\Ck^2_{\decay,\scdecay}$-asymptotically hyperbolic manifold with decay rates $\decay\in\interval*{\frac52+\outve}*3$ and $\scdecay=3+\outve>3$. If the mass vector is timelike, then the leaves $\M<\Hradius>$ of the CMC-foli\-ation are almost umbilic, $\zFundtrf<\Hradius>=\Oof(\exp({-}\decay\Hradius))$, and almost round, $\g<\Hradius>=\sinh(\Hradius)^2(\sphg+\Oof_2(\exp((2-\decay)\Hradius)))$. Furthermore, the three eigenvalues $\ewjac_i$ of the stability operator with smallest absolute value satisfy $\ewjac_i=\frac{6\mass*}{\sinh(\Hradius)^3}+\Oof{\exp({-}(3+\outve)\Hradius)}$. In particular, these surfaces are stable if and only if the mass vector is future-pointing.

The surfaces $\M<\Hradius>$ are graphs over (hyperbolic) geodesic spheres, $\M<\Hradius>=\hgraph\graphf<\Hradius>$, and their graph functions $\graphf<\Hradius>\in\Wkp^{2,p}(\hsphere_\Hradius(\hcenterz<\Hradius>))$, their hyperbolic center $\hcenterz<\Hradius>$, and their metric $\g<\Hradius>$ satisfy $\graphf<\Hradius>=\Oof_2{\exp((2-\decay)\Hradius)}$, $\hcenterz<\Hradius>=\houtcenterz+\Oof(\exp({-}\outve\Hradius))$, and $\g<\Hradius>=\sinh(\Hradius+\graphf<\Hradius>)^2(\sphg+\Oof_2(\exp({-}(2+\outve)\Hradius)))$, respectively, where $\houtcenterz$ denotes the hyperbolic center of mass.
\end{theorem}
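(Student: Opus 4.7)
The strategy is to combine the general regularity Theorem~\ref{Regularity_Theorem}---which applies to any CMC-surface of sufficient size in a $\Ck^2_{\decay}$-asymptotically hyperbolic manifold and which only sees the decay rate $\decay$---with a perturbative spectral analysis of the stability operator that exploits the extra assumption $\scdecay>3$ and the timelike-mass hypothesis.

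First, I would apply Theorem~\ref{Regularity_Theorem} to the leaves $\M<\Hradius>$ produced by Theorem~\ref{Existence_Theorem}. This already delivers every assertion that depends only on $\decay$: the almost-umbilicity $\zFundtrf<\Hradius>=\Oof(\exp({-}\decay\Hradius))$, the almost-roundness $\g<\Hradius>=\sinh(\Hradius)^2(\sphg+\Oof_2(\exp((2-\decay)\Hradius)))$, and the graph representation $\M<\Hradius>=\hgraph\graphf<\Hradius>$ with $\graphf<\Hradius>=\Oof_2(\exp((2-\decay)\Hradius))$ over a hyperbolic geodesic sphere centred at some $\hcenterz<\Hradius>$. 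The underlying mechanism is to read the CMC equation $\H<\Hradius>\equiv{-}2\cosh(\Hradius)/\sinh(\Hradius)$ as a quasilinear elliptic equation on $\sphere$ for $\graphf<\Hradius>$, decompose the perturbation $\outg-\houtg=\Oof_2(\exp({-}\decay\rad))$ into its $\ell\le 1$ and $\ell\ge 2$ spherical harmonic modes, absorb the $\ell=1$ modes into the choice of $\hcenterz<\Hradius>$, and invert the linearisation of the mean curvature operator on the complement of the translation modes, where it is uniformly controlled.

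The genuinely new content lies in the precise expansion of the three smallest-absolute-value eigenvalues of the stability operator $\jacobiext<\Hradius>=-\laplace<\Hradius>-|\zFund<\Hradius>|^2-\outric(\normal<\Hradius>,\normal<\Hradius>)$ and in the identification of $\hcenterz<\Hradius>$ with the Cederbaum-Cortier-Sakovich centre. On the exact hyperbolic sphere of radius $\Hradius$ in $\hyperbolicspace$, $\jacobiext<\Hradius>$ reduces to $-\hlaplace-2\sinh(\Hradius)^{-2}$, whose kernel is three-dimensional and spanned by the restrictions of the ambient hyperbolic coordinate functions (the infinitesimal generators of hyperbolic translations). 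I would then use first-order eigenvalue perturbation theory, testing $\jacobiext<\Hradius>$ against the translation modes transported to $\M<\Hradius>$ via $\hgraph\graphf<\Hradius>$. Using the sharp expansions of $\graphf<\Hradius>$ and $\g<\Hradius>$ from the previous step, and invoking the Gauss and contracted Codazzi equations to trade intrinsic curvature terms on $\M<\Hradius>$ for ambient scalar curvature $\outsc$ and momentum density $\outmomden$, the leading shift reduces---modulo $\Oof(\exp({-}(3+\outve)\Hradius))$---to spherical integrals of $\outsc$ and $\outg-\houtg$ against the translation modes over $\hsphere_\Hradius$. Matching these integrals with Cederbaum-Cortier-Sakovich's integral definition of $\mass*$ (Definition~\ref{Definition_Mass}) produces $\ewjac_i=6\mass*/\sinh(\Hradius)^3+\Oof(\exp({-}(3+\outve)\Hradius))$; the stability claim is then immediate, since $\mass*$ being future-pointing timelike is precisely what forces these three eigenvalues to be eventually positive.

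The principal obstacle is that in our generality the mass aspect function need \emph{not} exist pointwise: only the integrated hyperbolic mass vector $\mass*$ is well-defined. Consequently I cannot pass to pointwise leading coefficients of $\outg-\houtg$; every step of the perturbation analysis has to be phrased purely in terms of spherical integrals against translation modes, which is exactly what $\scdecay=3+\outve$ together with the integral definition of $\mass*$ permits. For the centre convergence $\hcenterz<\Hradius>=\houtcenterz+\Oof(\exp({-}\outve\Hradius))$, I would project the CMC equation onto the translation modes: the orthogonality condition that defined $\hcenterz<\Hradius>$ becomes an equation of the schematic form $\mass*\cdot(\hcenterz<\Hradius>-\houtcenterz)=\Oof(\exp({-}\outve\Hradius))$ which, since $\mass*$ is timelike and hence nonzero, inverts to the claim. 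Finally, the refined metric identity $\g<\Hradius>=\sinh(\Hradius+\graphf<\Hradius>)^2(\sphg+\Oof_2(\exp({-}(2+\outve)\Hradius)))$ is obtained by pulling back $\outg$ under $\hgraph\graphf<\Hradius>$, separating the radial factor $\sinh(\Hradius+\graphf<\Hradius>)^2$ from the transverse perturbation, and using the sharp $\graphf<\Hradius>$-decay together with the extra scalar curvature decay to upgrade the transverse remainder to the stated rate.
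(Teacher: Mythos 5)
The first half of your proposal matches the paper's structure: you apply Theorem~\ref{Regularity_Theorem} to get the $\decay$-dependent estimates (almost-umbilicity, roundness, graph representation), and you correctly identify that the eigenvalue expansion $\ewjac_i=\frac{6\mass*}{\sinh(\Hradius)^3}+\Oof(\exp({-}(3+\outve)\Hradius))$ comes from the spectral analysis of $\jacobiext*$ against linearized-boost modes (the paper's Proposition~\ref{Stability}), with the Gau{\ss} equation and the Hawking mass bridging the geometry to $\mass*$.

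Where your argument departs from the paper, and where the gap lies, is the centre estimate $\hcenterz<\Hradius>=\houtcenterz+\Oof(\exp({-}\outve\Hradius))$ and the associated sharper $\outve$-rate bounds. You propose to ``project the CMC equation onto the translation modes'' and read off $\mass*\cdot(\hcenterz<\Hradius>-\houtcenterz)=\Oof(\exp({-}\outve\Hradius))$. This is precisely the direct, Neves--Tian/Kazdan--Warner type of argument that the paper's Remark~\ref{Remark_NotControllingTheCenter} explicitly says cannot be used here: in the regime $\decay\in\interval{\frac52}{3}$ the error $\outg-\houtg=\Oof(\exp(-\decay\rad))$ is of the \emph{same} order as (or larger than) a would-be mass-aspect contribution, and the integral of the ambient curvature against translation modes evaluated on $\M<\Hradius>$ is only close to the mass integral once one already knows the surface sits near a coordinate sphere around $\houtcenterz$---exactly what you are trying to establish. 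Your proposal papers over this circularity by invoking ``the integral definition of $\mass*$,'' but that alone does not break it. The paper avoids the obstruction entirely via the a-posteriori/continuity approach of Section~\ref{proofmaintheorem}: in balanced coordinates, the boost part of the $\atime$-lapse function $\boost\rnu[\atime]$ is bounded by $C\exp(-\outve\Hradius)$ (second case of Lemma~\ref{First_estimates_on_lapse}, propagated via Lemma~\ref{I=J}), and integrating $\rnu[\atime]$ along $\atime$ from the AdS reference solution (where the CMC sphere is centred exactly at the origin $=\outcenterz$) to $\atime=1$ controls both $\hcenterz<\Hradius>-\outcenterz$ and the graph function $\graphf<\Hradius>'$ over $\hsphere_\Hradius(\outcenterz)$. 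This continuity step is the genuinely new content of Theorem~\ref{Regularity_Theorem_full} beyond what Theorem~\ref{Regularity_Theorem} already gives, and it is absent from your proposal. You should either supply the mass-integral convergence-rate and graph-self-interaction estimates needed to make a direct projection argument rigorous despite the absence of a mass aspect tensor, or replace that paragraph with the continuity-method argument of the paper.
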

In particular, the above theorem gives a quantitative version of the statement \lq the hyperbolic coordinate center of $\M<\Hradius>$ converges to the hyperbolic center of mass\rq\ from Theorem~\ref{Existence_Theorem}.\pagebreak[3]

Our method of proof implies a stability of the surfaces under perturbations of the metric---the precise statement can be found in Theorem~\ref{Stability_Theorem_full}.
\begin{theorem}[Stability of the CMC-foliation, see Theorem~\ref{Stability_Theorem_full} and Corollary~\ref{Stability_Corollary_full}]
Let $(\outM[1],\outg[1])$ and $(\outM[2],\outg[2])$ be two $\Ck^2_{\decay,\scdecay}$-asymptotically hyperbolic Riemannian metrics with decay rates $\decay>\frac52$ and $\scdecay>3$. In balanced coordinates, each CMC-leaf $\M[1]<\Hradius>$ (CMC with respect to $\outg[1]$) is as $\Wkp^{2,p}$-close to the corresponding CMC-leaf $\M[2]<\Hradius>$ (CMC with respect to $\outg[2]$) as the metrics are $\Ck^1_{\decay}$-close to each other.

In particular, if a sequence of uniformly $\Ck^2_{\decay,\scdecay}$-asymptotically hyperbolic Riemannian metrics $\outg[n]$ on $\R^3\setminus\overline{B_1(0)}$ converges to a $\Ck^2_{\decay,\scdecay}$-asymptotically hyperbolic Riemannian metric $\outg$ with respect to the $\Ck^1_{\decay}$-topology and the mass vectors of $\outg[n]$ and $\outg$ are balanced, \ie $\mass[i]=(\mass[i]_0,0,0,0)$ and $\mass=(\mass_0,0,0,0)$, then each sequence of CMC-leaves $\M[i]<\Hradius>$ converge in $\Wkp^{2,p}$ to the CMC-leaf $\M<\Hradius>$ of the limit metric and this convergence is uniform in $\Hradius$.
\end{theorem}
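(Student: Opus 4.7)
The plan is to parameterize both families of CMC-leaves as $\Wkp^{2,p}$-graphs over common hyperbolic geodesic spheres and to compare the graph functions via the linearized CMC equation. Balanced coordinates place the centers of mass of both metrics at the origin, so by Theorem~\ref{Regularity_Theorem_short} the hyperbolic coordinate centers of $\M[1]<\Hradius>$ and $\M[2]<\Hradius>$ both decay to the origin at rate $\exp({-}\outve\Hradius)$. For $\Hradius\ge\rradius_0$ each leaf can thus be written as $\M[i]<\Hradius>=\hgraph\graphf[i]<\Hradius>$ over the same centered sphere $\hsphere_\Hradius(0)$, with graph functions of size $\Oof(\exp((2-\decay)\Hradius))$ in $\Wkp^{2,p}(\hsphere_\Hradius(0))$.

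Both graph functions satisfy the same prescribed-mean-curvature equation $\H(\graphf[i]<\Hradius>,\outg[i])\equiv{-}2\,\frac{\cosh(\Hradius)}{\sinh(\Hradius)}$, where $\H(\graphf,\outg)$ denotes the mean curvature of $\hgraph\graphf$ computed with respect to $\outg$. Subtracting and rewriting
\[
 \H(\graphf[1]<\Hradius>,\outg[1])-\H(\graphf[2]<\Hradius>,\outg[1])=\H(\graphf[2]<\Hradius>,\outg[2])-\H(\graphf[2]<\Hradius>,\outg[1]),
\]
the left-hand side equals $L_\Hradius\bigl(\graphf[1]<\Hradius>-\graphf[2]<\Hradius>\bigr)$ modulo a quadratic error, where $L_\Hradius=\laplace+|\zFund|^2+\outric(\nu,\nu)$ is the Jacobi operator on $\M[1]<\Hradius>$ (more precisely, integrated along the interpolating family of graphs). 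The right-hand side is pointwise bounded on $\M[2]<\Hradius>$ by the $\Ck^1$-norm of the metric difference, hence by $\|\outg[1]-\outg[2]\|_{\Ck^1_{\decay}}\cdot\exp({-}\decay\Hradius)$.

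The main obstacle is the uniform-in-$\Hradius$ invertibility of $L_\Hradius$. By Theorem~\ref{Regularity_Theorem_short}, the three smallest eigenvalues of $L_\Hradius$ are $\frac{6\mass*}{\sinh(\Hradius)^3}+\Oof(\exp({-}(3+\outve)\Hradius))$, with eigenfunctions close to the first spherical harmonics on $\hsphere_\Hradius$ (infinitesimal translations), while after rescaling the leaf to the unit sphere the remaining eigenvalues stay bounded away from zero uniformly in $\Hradius$. Timelikeness of the mass vector keeps the three small eigenvalues non-degenerate, so $L_\Hradius$ is genuinely invertible. The translation component of $\graphf[1]<\Hradius>-\graphf[2]<\Hradius>$ reflects the difference of the two coordinate centers; because both mass vectors are balanced (spatial part zero), the quantitative part of Theorem~\ref{Regularity_Theorem_short} controls this difference by $\|\outg[1]-\outg[2]\|_{\Ck^1_{\decay}}$.

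Rescaling to unit-sized round spheres and applying standard $\Wkp^{2,p}$-elliptic theory for $L_\Hradius$ then yields
\[
 \|\graphf[1]<\Hradius>-\graphf[2]<\Hradius>\|_{\Wkp^{2,p}(\hsphere_\Hradius(0))}\le C\,\|\outg[1]-\outg[2]\|_{\Ck^1_{\decay}}
\]
uniformly in $\Hradius\ge\rradius_0$, establishing the first statement. The corollary then follows by taking $\outg[1]:=\outg[n]$ and $\outg[2]:=\outg$; the uniform asymptotics of the family $\outg[n]$ yield uniform constants $\rradius_0$ and $C$, so the $\Wkp^{2,p}$-convergence is uniform in $\Hradius$.
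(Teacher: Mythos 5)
Your strategy — writing both CMC-leaves as graphs over a common geodesic sphere, forming the difference equation $\H(\graphf[1],\outg[1])-\H(\graphf[2],\outg[1])=\H(\graphf[2],\outg[2])-\H(\graphf[2],\outg[1])$, and inverting the Jacobi operator — is the ``one-step'' version of what the paper does. The paper instead interpolates, setting $\outg[\atime]:=\atime\,\outg[1]+(1-\atime)\,\outg[2]$, observes that all $\outg[\atime]$ remain balanced (the mass functional is linear in the metric perturbation to leading order), reruns the Section~\ref{proofmaintheorem} continuity argument for each $\Hradius$ along this family, and then integrates the lapse-function bound of Lemma~\ref{First_estimates_on_lapse} in $\atime$. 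The continuity version buys you a clean control of the quadratic error (you only ever need infinitesimal smallness of $\partial_\atime\outg[\atime]$), and, crucially, it lets the balancedness of each $\outg[\atime]$ feed directly into Lemma~\ref{First_estimates_on_lapse}.

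That is where your proposal has a genuine gap. The operator $L_\Hradius$ is not uniformly invertible after rescaling: on the boost (translation) block it is approximately multiplication by $\frac{6\mass*}{\sinh(\Hradius)^3}$, so its inverse amplifies by $\sinh(\Hradius)^3/|\mass*|$. Your estimate on the source term is $\lesssim\|\outg[1]-\outg[2]\|_{\Ck^1_{\decay}}\exp((\frac2p-\decay)\Hradius)$, and for $\decay\in\interval{\frac52}*3$ the composed bound $\exp((3+\frac2p-\decay)\Hradius)$ grows without bound. You then try to control the boost component geometrically, saying ``the quantitative part of Theorem~\ref{Regularity_Theorem_short} controls this difference by $\|\outg[1]-\outg[2]\|_{\Ck^1_{\decay}}$.'' But Theorem~\ref{Regularity_Theorem_short} only gives $\hcenterz<\Hradius>=\houtcenterz+\Oof(\exp({-}\outve\Hradius))$ for each metric separately; it bounds the centers by a fixed decaying quantity, not by a quantity proportional to the metric difference. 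To get the required linearity in $\|\outg[1]-\outg[2]\|$, you must show that the \emph{boost projection} of the source term decays like $\exp({-}(3+\outve)\Hradius)$ rather than $\exp({-}\decay\Hradius)$, which is exactly the computation in the proof of Lemma~\ref{First_estimates_on_lapse}: one expresses $\boost{(\jacobiext*\rnu)}$ via the Ricci-type formula from Definition~\ref{Definition_Mass}/Remark~\ref{remarks_on_mass}(\ref{Ricci_version_of_mass}) and uses $\mass^\oi=0$ in balanced coordinates. Without this step, your argument does not close; with it, it essentially reproduces the paper's proof in a one-step form.
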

Note that this also implies a characterization of the evolution of the CMC-leafs in time (under the Einstein equations), see Remark~\ref{EvolutionInTime}.\smallskip

Finally, one of the central steps in our argument may be worth noting by itself: it is a regularity theorem for large, (pointwise) nearly umbilic surfaces with (pointwise) nearly constant mean curvature in the hyperbolic space. Although, this theorem seems to be very natural, it is a cruicial step in the proof of all the other theorems. In fact, it contains one of the most central improvements of this article compared to the one by Neves-Tian, \cite{NevesTianExistenceCMC_II}.
\begin{theorem}[Regularity of almost umbilic surfaces in \texorpdfstring{$\hyperbolicspace^3$}{hyperbolic space}, see~Theorem~\ref{W2pSurfaceRegularity}]
Let $\M\hookrightarrow\hyperbolicspace^3$ be a hypersurface in the hyperbolic space $\hyperbolicspace^3=(\R^3,\houtg)$. Assume that the hyperbolic mean curvature of $\M$ is very close to a constant, $\hH={-}2\,\frac{\cosh(\Hradius)}{\sinh(\Hradius)}+\Oof(\exp((2+\outve)\Hradius))$,\vspace{-.2em} $\M$ is pointwise almost umbilic, $\hzFundtrf=\Oof(\exp((1+\outve')\Hradius))$, is large, \ie $\Hradius\gg1$. If $\M$ is also bounded by two geodesic spheres of radius $\frac12\Hradius$ and $\frac32\Hradius$ around some point $p\in\hyperbolicspace^3$, then it is a graph over a geodesic sphere, $\M=\graph\graphf$, this graph function $\graphf$ satisfies $\graphf=\Oof_2(\exp({-}\outve\Hradius))$, and $\M$ is as close to being umbilic as its mean curvature is close to be constant, \ie $\hzFundtrf=\Oof(\exp((2+\outve)\Hradius))$.
\end{theorem}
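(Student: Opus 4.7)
The plan is to adapt the De Lellis--M\"uller regularity strategy for almost umbilic closed surfaces in $\R^{3}$ to the hyperbolic ambient space, exploiting the constant sectional curvature $-1$ of $\hyperbolicspace^{3}$ and the fact that geodesic spheres are the unique closed totally umbilic surfaces of prescribed constant mean curvature.

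\emph{Step 1 --- Bochner iteration for $\hzFundtrf$.} For a surface $\M\hookrightarrow\hyperbolicspace^{3}$ the Codazzi equation reduces to $\div\hzFundtrf=\tfrac12 d\hH$ and Simons' identity for the traceless second fundamental form takes the schematic form
\[
 \laplace|\hzFundtrf|^{2}
 = 2|\levi\hzFundtrf|^{2}
 + Q(\hH,\hzFundtrf)
 + R(d\hH),
\]
where $|Q|\le C(1+\hH^{2})|\hzFundtrf|^{2}$ and $R$ is linear in $|d\hH|^{2}$. Rescaling the induced metric by the intrinsic length scale $\sinh(\Hradius)^{-2}$ (so that $\M$ has bounded diameter) and running a Moser iteration on $\M$, the starting pointwise bound $|\hzFundtrf|=\Oof(\exp(-(1+\outve')\Hradius))$ together with $|d\hH|=\Oof(\exp(-(2+\outve)\Hradius))$ bootstraps to the sharp pointwise estimate $|\hzFundtrf|=\Oof(\exp(-(2+\outve)\Hradius))$.

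\emph{Step 2 --- Graph representation.} The sandwich condition combined with the improved umbilicity from Step~1 shows, by a Hausdorff-compactness argument in the rescaled picture, that $\M$ is the hyperbolic graph of a small function $\graphf$ over some geodesic sphere $\hsphere_\Hradius(\hcenterz)$ with $\hcenterz$ lying in a bounded region around $p$. The centre $\hcenterz$ is fixed uniquely by requiring $\graphf$ to be $\Lp^{2}$-orthogonal to the coordinate functions on $\sphere$, which form the three-dimensional kernel of the linearised mean curvature operator on the round sphere.

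\emph{Step 3 --- Elliptic estimate on the round sphere.} Writing $\hH(\hgraph\graphf)+2\cosh(\Hradius)/\sinh(\Hradius)=\Oof(\exp(-(2+\outve)\Hradius))$ as a quasilinear PDE on $\hsphere_\Hradius(\hcenterz)$ and rescaling by $\sinh(\Hradius)^{2}$, the principal symbol becomes $\sphlaplace+2$, which is invertible on the orthogonal complement of its three-dimensional kernel. Standard Calder\'on--Zygmund $\Wkp^{2,p}$ estimates on $\sphere$ then yield $\graphf=\Oof_{2}(\exp(-\outve\Hradius))$, and the Gauss equation applied with the now-controlled $\graphf$ recovers the statement about the induced metric.

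\emph{Main obstacle.} The delicate point is Step~1: the Simons reaction coefficient stays bounded but does not tend to zero, so the Moser iteration has to be performed in the correctly rescaled intrinsic metric with every constant explicitly tracked as a function of $\Hradius$ in order to transfer smallness from $d\hH$ onto $|\hzFundtrf|^{2}$ without loss. Neves--Tian bypassed this at the cost of assuming a well-defined mass aspect function; here only the pointwise almost-umbilicity and the sandwich hypothesis are available, which forces the iteration to be run intrinsically on $\M$ rather than by comparison with ambient barriers.
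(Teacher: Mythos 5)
There is a genuine gap, and it sits exactly at the point you flag as the "main obstacle." In Step~1 you run a Bochner/Moser iteration on Simons' identity and feed in a bound $|d\hH|=\Oof(\exp(-(2+\outve)\Hradius))$. No such bound is available: the hypothesis only controls $\hH+2\coth(\Hradius)$ itself, pointwise to order $\exp(-(1+\outve)\Hradius)$ and in $\Lp^p$ to order $\exp(-(2+\outve)\Hradius)$, with no control on $d\hH$. When Simons' identity is tested against $\zFundtrf$ and integrated by parts, the contribution from $\levi\div\zFundtrf$ becomes $-\tfrac14\int|d\hH|^2$, which cannot be absorbed from the stated hypotheses. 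Beyond this, the paper's Remark~\ref{Expectation_on_k} explains precisely why a direct improvement of $\zFundtrf$ via Simons+elliptic iteration does not give the sharp rate in the hyperbolic setting: here $\levi\zFundtrf$ has the \emph{same} decay order as $\zFundtrf$ (both $\exp(-\decay\Hradius)$ for geodesic coordinate spheres in an asymptotically hyperbolic manifold), so the scaling structure on which Moser iteration relies is broken. The paper therefore \emph{does not} improve $\zFundtrf$ first; it instead establishes the graph structure by a maximum-principle/ODE argument along integral curves of $X=(\houtlevi*\rad)^{T}$ (Step~1 of the paper), deduces from the mean curvature equation that the graph function approximately solves the constant Gauss curvature equation $\sphlaplace f=1-\exp(2f)$ (Step~2), and only afterwards recovers the estimate on $\zFundtrf$ from the $\Wkp^{2,p}$-control of the graph function.

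Step~2 of your proposal also underestimates the nonlinear obstruction. Fixing the centre $p_{1}$ by requiring $\graphf\perp_{\Lp^{2}}\{x^{i}\}$ is the linearization of the correct condition and does not by itself give smallness of $\graphf$: the moduli space of solutions to $\sphlaplace f=1-\exp(2f)$ is non-compact, parametrized by a conformal parameter $\lambda$ that can diverge even while every individual $\Lp^{2}$-projection is zero, and the existence of a centre making these three projections vanish simultaneously is not free. The paper resolves this with a genuinely nonlinear device: it defines the \emph{pseudo-centre} $Z^{i}(\Omega)=\int_{\Omega}\frac{\outx^{i}}{\rad}\d\houtmug$ of the enclosed region, shows via an explicit flow of hyperbolic isometries (Lemma~\ref{pseudo-center}) that $|Z|$ can be decreased monotonically to zero, and relates $|Z|$ \emph{exactly} to the conformal parameter $\lambda=\exp(\Vert\solutionf\Vert_{\Lp^{\infty}})$ through the classification of constant-Gauss-curvature conformal factors on $\sphere$ (Lemma~\ref{lambda_control_by_ux_i_control}, using Chen's theorem). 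This explicit nonlinear identity is what replaces Neves--Tian's non-concentration argument (which relied on the mass aspect function); it is absent from your proposal and is the crucial new idea of the proof. Your Step~3 elliptic estimate is sound in principle but cannot be deployed until the graph function has been shown to be small, which is exactly the output of the paper's Step~3.
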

Note that the results $\graphf=\Oof_2(\exp({-}\outve\Hradius))$ and $\hzFundtrf=\Oof(\exp((2+\outve)\Hradius))$ are not necessarily pointwise true, but only in a Sobolev/Lebesgue sense.

\subsection{Decay rates}\label{DecayRates}
Throughout the paper, we study asymptotically hyperbolic manifolds. This means, there is a given Riemannian manifold $(\outM,\outg*)$ and a coordinate system of $\outM$ (outside of some compact set) given such that $\outg$ in these coordinates is asymptotically equal to the hyperbolic metric $\houtg$. More precisely, we compare the metric $\outg*$ with a reference metric $\refoutg$ (mostly the hyperbolic metric) and assume that 
\[ \outg=\refoutg+\Oof_k{\exp({-}\decay\rad)}, \qquad
	\outsc=\refoutsc+\Oof{\exp(-\scdecay\rad)}, \]
where $\decay\le\scdecay$ and $k$ are called \emph{decay rates} (of the metric and the scalar curvature) and \emph{decay order}, respectively. Here, $\Oof_k(\exp({-}\tau\rad))$ denotes a tensor $E$ for which $\vert E\vert_{\houtg*}$, $\vert\houtlevi*E\vert_{\houtg*}$, $\vert\houtlevi*\houtlevi*E\vert_{\houtg*}$, \dots, $\vert\houtlevi*^{k}E\vert_{\houtg*}$ decay at infinity, \ie for $\rad\to\infty$, at least as fast as $\exp({-}\tau\rad)$. Note that in many cases (\eg in this article), we can weaken the above assumption by only assuming
\[ \outg=\refoutg+\Oof_k{\exp({-}\decay\rad)}, \qquad
	\outsc\ge\refoutsc+\Oof{\exp(-\scdecay\rad)} \]
and still get more or less the same result, see Remark~\ref{AlternativeAssumptions}.

A very important example of an asymptotically hyperbolic manifold is the spatial Anti-de Sitter-Schwarzschild solution $\AdSoutg$ satisfying
\[ \AdSoutg = \houtg + \frac{\frac23\mass*}{\sinh(\rad)} + \Oof_\infty(\exp({-}5\rad)) = \d r^2 + (\sinh(\rad)^2 + \frac{\frac23\mass*}{\sinh(\rad)})\sphg + \Oof_\infty(\exp({-}5\rad)). \]
Furthermore, this metric has the same constant scalar curvature as the hyperbolic space, $\AdSoutsc={-}6$. Here, $\mass*\neq0$ denotes the \emph{mass parameter} of the manifold.\footnote{Note that we choose the mass parameter to be compatible with the definition of mass by Chru{\'s}ciel-Herzlich \cite{chrusciel2003mass,herzlich2015computing} differing from the definition used by Neves-Tian by a factor $2$.} Note that the first term which differs from the hyperbolic space is the \emph{mass term} $\frac{\frac13\mass*}{\sinh(\rad)}\sphg$ which is of order ${-}3$ (its components decay with $\sinh(\rad)^{-1}$ as $\vert\sphg\vert_{\houtg}=\sinh(\rad)^{{-}2}$). In particular, we can calculate the first non-constant term (again of order ${-}3$) of the Ricci curvature 
\begin{equation*}\labeleq{Ass_Ricci}
 \AdSoutric={-}2+\frac{\mass*}{\sinh(\rad)}\sphg*-\frac{2\mass*}{\sinh(\rad)^3}\dr^2+\Oof_\infty(\exp({-}5\rad))
\end{equation*}
without knowing the \lq error-term\rq\ $\Oof_\infty(\exp({-}5\rad))$.

\subsection{Known results}\label{ResultsSoFar}
In \cite{NevesTianExistenceCMC_I}, Neves-Tian generalized the results by Rigger, \cite{rigger2004foliation}, by proving the existence and uniqueness of a CMC-foliation for a more general class of asymptotically hyperbolic manifolds than Rigger used: Neves-Tian assumed $\Ck^2_5$-asymptotic to the Anti-de Sitter-Schwarzschild spatial solution, meaning they assumed the decay rates $\scdecay=\decay=5$ with the spatial anti-de Sitter metric $\refoutg=\AdSoutg$ as reference metric, \ie
\begin{equation*}\labeleq{Ass_NevesTian_I}
	\outg = \AdSoutg + \Oof_2(\exp({-}5\rad))
		= \houtg* + \frac{2\,\mass*}{3\sinh(\rad)}\,\sphg + \Oof_2(\exp({-}5\rad)).
\end{equation*}
In particular, the Ricci curvature in their setting is still of the form \eqref{Ass_Ricci} for $\Oof_0$ instead of $\Oof_\infty$.

In a second paper the following year, Neves-Tian further generalized this result to the decay rate $\decay=4$ and the more general reference metric\footnote{Note that this means for each asymptotically Anti-de Sitter metric, \ie of the form \eqref{Ass_NevesTian_I}, we have $\massaspect=\frac{\mass*}2\id_{\sphere}$ and \emph{not} $\massaspect=\mass*\id_{\sphere}$ as one could also think. These explains the additional factor two.} $\refoutg=\houtg+\frac{4\massaspect}{3\sinh(\rad)^3}$, \cite{NevesTianExistenceCMC_II}---note that the increased the decay order to $k=3$---, \ie they assumed
\begin{equation*}\labeleq{Ass_NevesTian_II}
 \outg = \houtg* + \frac{4\,\massaspect}{3\sinh(\rad)}\,\sphg + \Oof_3(\exp({-}4\rad))
\end{equation*}
and therefore the Ricci curvature is of the form \eqref{Ass_Ricci} when we replace $\mass*$ by $\trmassaspect*$ and $\Oof_\infty$ by $\Oof_1$. This reference metric $\refoutg*$ is characterized by the so called \emph{mass aspect tensor} $\massaspect\in\Omega^{(0,2)}(\sphere)$ which got his name as its characterizes the \emph{mass vector} $\mass=(\mass^\ui)_\ui\in\R^{3,1}$ via
\[ \mass^0 = \frac1{16\pi}\int_{\sphere}\trmassaspect\d\sphmug, \qquad \mass^i = \frac1{16\pi}\int_{\sphere} x^i\,\trmassaspect\d\sphmug, \]
where $x^i$ denote the Euclidean coordinates induces on $\sphere=\sphere_1(0)$.
In these works, Neves-Tian had to additionally assume that the mass $\mass*$ is positive and the mass aspect function $\sphtr\,\massaspect$ is positive, respectively. Both assumptions ensure that the mass vector $\mass=(\mass^\ui)_\ui$ is future-pointing and timelike. The latter seems to be a necessary assumption for the existence of a unique CMC-foliation, see \cite{cederbaum2015center}. However, the assumptions on the mass and mass aspect function are stronger than the pure assumption of a future-pointing and timelike mass vector.

In contrast to Neves-Tian, Chodosh in \cite{chodosh2014large} assumed only \emph{control of the metric up to the mass aspect order} $\decay=3$ and $\outsc\ge{-}6$, \ie
\[ \outg=\houtg+\Oof_2(\exp({-}3\rad)),\qquad
		0\le\exp(\rad)(\outsc+6)\in\Lp^1(\outM), \]
and proved existence of \emph{isoperimetric regions} $\Omega$ of volume $V$---in particular, $\partial*\Omega$ is a stable CMC-surface---for every sufficiently large volume $V$. Furthermore, he proved uniqueness of these isoperimetric regions if $(\outM,\outg*)$ is \emph{identical} to the Anti de-Sitter space outside of some compact set.\smallskip

In this paper, we generalize the existence and uniqueness of the CMC-foliation to the decay rates $\decay>\frac52$ and $\scdecay>3$. In doing so, we reach a decay rate \emph{below} the critical order of the mass aspect tensor. We note that in contrast to the Euclidean setting, we still have to assume higher decay rates than $\decay>\frac32$ and $\scdecay>3$ which are the ones necessary to ensure that the mass vector is well-defined. As we do not assume $\outsc\ge\houtsc\equiv{-}6$, the CMC-leaves will not necessarily correspond to solutions of the isoperimetric problem---as Chodosh proved under his stronger assumptions.

\begin{remark}[Comparing the uniqueness result with the one by Neves-Tian]\label{Compare_uniqueness}
In \cite{NevesTianExistenceCMC_I} and \cite{NevesTianExistenceCMC_II}, Neves-Tian proved uniqueness in the class of stable constant mean curvature surfaces $\M$ satisfying
\[ \max_{\M}\rad \le \frac65\min_{\M}\rad + \text{constant}\quad\text{and}\quad
		\max_{\M}\rad \le \min_{\M}\rad + \text{constant}, \]
respectively. Here, there assumption were asymptotics to the spatial Anti-de Sitter solution, \ie \eqref{Ass_NevesTian_I}, and a well-defined mass aspect tensor, \ie \eqref{Ass_NevesTian_II}, respectively. Our uniqueness result holds within the class of constant mean curvature surfaces $\M$ with controlled instability\footnote{This is a slightly weaker assumption than stability. We have to introduce this weaker version to include the case of a past-pointing mass vector.} and
\[ \max_{\M}\rad  \le \zeta'\min_{\M}\rad, \qquad\zeta'<2-\frac5{2\decay} \xrightarrow{\decay\to3} \frac76, \]
\ie it generalizes Neves-Tian's first uniqueness result from decay rate $\decay=\scdecay=5$ to decay rates $\decay\in\interval{\frac52}*3$, $\scdecay>3$. Note that even for our extreme case $\decay=3$, $\scdecay\ge\frac72$, we do not achieve the factor $\zeta'=\frac65$ which Neves-Tian used (for $\decay\ge5$) but only every $\zeta'<\frac76$. However we expect that if we apply our methods to one of their setting, then the uniqueness should hold for a even larger radius factor $\zeta'>\frac65$.
\end{remark}

\par\medskip\par\egroup\textbf{Acknowledgment.}
The author wishes to thank Carla~Cederbaum for discussions on optimal decay rates and for sharing her knowledge on the hyperbolic center of surfaces and asymptotically hyperbolic spaces. Furthermore, the proof of the very important Theorem~\ref{W2pSurfaceRegularity} would not be possible without the inspiring conversations with several other people and therefore the author owes thanks to Sebastian Heller and Gerhard Huisken for fruitful discussions on regularity of graphs, to Mattias Dahl for very helpful comments on isometries of the hyperbolic space, and to Katharina Radermacher for observing a necessary interpretations of an integral related to the coordinate center of regions in the hyperbolic space. Finally, the author thanks the \emph{Alexander von Humboldt Foundation} for ongoing financial support via the \emph{Feodor Lynen scholarship}.

\section*{Structure of the paper}
In Section~\ref{Assumptions_and_notation}, we explain the notation and definitions used in this article. Although it is quite technical, Section~\ref{Regularity_of_the_hypersurfaces} contains the most important step in the proof of all of the main theorems, namely the regularity theory of CMC-surfaces in asymptotically hyperbolic spaces. In particular, we prove important inequalities on the roundness of these surfaces. The stability of these surfaces is then proven in Section~\ref{Section-Stability}. The existence and uniqueness theorems are proven in Section~\ref{proofmaintheorem}---these proofs are based on the same continuity argument as the existence proofs by Metzger and Neves-Tian in \cite{metzger2007foliations,NevesTianExistenceCMC_II}, but extend this proof structure to a \lq smooth\rq\ argument as it was previously done by the author in \cite{nerz2015CMCfoliation}. Now, the regularity theorem is a corollary of the results in Section~\ref{Regularity_of_the_hypersurfaces}. In the short Section~\ref{Section_Stability}, we show the stability of the CMC-foliation, \ie the stability of the CMC-leaves under pertubations of the metric. Finally, we state and prove the regularity theorem for almost umbilic surfaces in $\hyperbolicspace$ in Appendix~\ref{Section-W2pSurfaceRegularity}.

\section{Assumptions and notation}\label{Assumptions_and_notation}
\begin{notation}[Notations for the most important tensors]
In order to study foliations (near infinity) of three-dimensional Riemannian manifolds by two-dimensional spheres, we have to deal with different manifolds (of different or the same dimension) and different metrics on these manifolds, simultaneously. To distinguish between them, all three-dimensional quantities like the surrounding manifold $(\outM,\outg*)$, its Ricci and scalar curvature $\outric$ and $\outsc$ and all other derived quantities carry a bar, while all two-dimensional quantities like the CMC leaf $(\M,\g*)$, its second fundamental form $\zFund*$, the trace-free part of its second fundamental form $\zFundtrf:=\zFund*-\frac12\,(\tr\zFund*)\g*$, its Ricci, scalar, and mean curvature $\ric$, $\sc$, and $\H:=\tr\zFund$, its outer unit normal $\nu$, and all other derived quantities do not.\end{notation}

Here, we interpret the second fundamental form and the normal vector of a hypersurface as quantities of the surface (and thus as two-dimensional). For example, if $\M<\Hradius>$ is a hypersurface in $\outM$, then $\nu<\Hradius>$ denotes its normal (and \emph{not} $\IndexSymbol{\outsymbol\nu}<\Hradius>$). The same is true for the \lq lapse function\rq\ and the \lq shift vector\rq\ of a hypersurfaces arising as a leaf of a given deformation or foliation. Furthermore, we stress that the sign convention used for the second fundamental form, \ie $\zFund(X,Y)=\outg(\outlevi*_{\!X}Y,\nu)$ for $X,Y\in\X(\M)$, results in the \emph{negative} mean curvature $\boldsymbol{\eukH(\sphere_\rradius)\equiv{-}\frac2\rradius}$ for the Euclidean sphere of radius $\rradius$.

\begin{notation}[Left Indexes and accents of tensors]
If different two-dimensional manifolds or metrics are involved, then the lower left index denotes the mean curvature index $\Hradius$ of the current leaf $\M<\Hradius>$, \ie the leaf with mean curvature $\H<\Hradius>\equiv{-}\frac{\cosh(\Hradius)}{\sinh(\Hradius)}$, or the radius $\rradius$ of a coordinate sphere $\sphere_\rradius(0)$. Furthermore, quantities carry the upper left index $\hyperbolich$, $\euclideane$, and $\sphg*$ if they are calculated with respect to the hyperbolic metric $\houtg*$, the Euclidean metric $\eukoutg$, and the standard metric $\sphg<\Hradius>$ of the Euclidean sphere $\sphere_\Hradius(0)$, correspondingly. We abuse notation and suppress the left indexes, whenever it is clear from the context which manifold and metric we refer to.
\end{notation}

\begin{notation}[Indexes]
We use upper case latin indices $\ii$, $\ij$, $\ik$, and $\il$ for the two-dimensional range $\lbrace2,3\rbrace$, lower case latin indices $\oi$ and $\oj$ for the three-dimensional range $\lbrace 1,2,3\rbrace$, and the greek index $\ui$ for the four-dimensional range $\{0,1,2,3\}$. The Einstein summation convention is used accordingly.\pagebreak[3]\smallskip
\end{notation}

As there are different definitions of \lq asymptotically hyperbolic\rq\ in the literature, we now give the one used in this paper. 
\begin{definition}[\texorpdfstring{$\Ck^2_{\decay,\scdecay}$}{C2}-asymp\-to\-tic\-ally hyperbolic Riemannian manifolds]\label{Ck_asymptotically_hb}
Let $\decay,\scdecay>0$ be constants. A triple $(\outM,\outg,\outx)$ is called (three-dimensional) \emph{$\Ck^2_{\decay,\scdecay}$-asymp\-to\-tic\-ally hyperbolic} Riemannian manifold if $(\outM,\outg)$ is a three-dimensional smooth Riemannian manifold and $\outx:\outM\setminus\overline L\to\R^3$ is a smooth chart of $\outM$ outside a compact set $\overline L\subseteq\outM$ such that there exists a constant $\oc\ge0$ with
\begin{equation*} 
 \vert\outg-\houtg\vert_{\houtg*} \hspace{-.05em}+ \vert\houtlevi*(\outg*-\houtg*)\vert_{\houtg*} \hspace{-.05em}+ \vert\outric-\houtric \vert_{\houtg*} \le \oc\,\exp(-\decay\,\rad), \quad
 \vert \outsc-\houtsc\vert \le \oc\,\exp({-}\scdecay\,\rad),\hspace{-.1em}
 \labeleq{Decay_assumptions_g}\end{equation*}
where $\houtg*=\d r^2+\sinh(\rad)^2\sphg*$ and $\sphg*$ denote the hyperbolic metric and the standard metric of the Euclidean unit sphere $\sphere$, respectively. Here, these quantities are identified with their push-forward along $\outx$. Finally, $(\outM,\outg*,\outx)$ is called $\Ck^2_{\decay}$-asymptotically hyperbolic if it is $\Ck^2_{\decay,\decay}$-asymptotically hyperbolic.
\end{definition}
We often abuse notation and suppress the chart $\outx$.
\begin{remark}[Alternative assumptions]\label{AlternativeAssumptions}
We can weaken the assumption on $\outsc$ by only assuming $\outsc\ge\houtsc+\outc\,\exp({-}\scdecay\rad)$, $(\outsc-\houtsc)\in\Lp^1(\sinh(\rad)\,\houtmug)$ and still get the same results except the following: In Theorem~\ref{Regularity_Theorem_short} (and \ref{Regularity_Theorem_full}), we do not achieve $\ewjac_i=\frac{6\mass*}{\sinh(\Hradius)^3}+\Oof(\exp({-}(3+\outve)\Hradius))$ for the three eigenvalues $\ewjac_i$ of the (negative) stability operator with smallest absolute value, but only $\ewjac_i\ge\frac{6\mass*}{\sinh(\Hradius)^3}-\Oof(\exp({-}(3+\outve)\Hradius))$.

Furthermore, we can weaken the above assumptions by only assuming
\[ \outg=\houtg+\oof_2(\exp({-}\frac52\rad)), \quad
	\outsc=\houtsc+\oof(\exp({-}3\rad)), \quad
	\exp(\rad)(\outsc-\houtsc)\in\Lp^1(\outM\setminus\overline L), \]
\ie by replacing $\exp({-}\decay\rad)$ and $\exp({-}\scdecay\rad)$ by $f(\rad)$ and $g(\rad)$, where $f$ and $g$ are smooth functions on $\interval0\infty$ with
\[ \lim_{r\to\infty}\exp(\frac52r)f(r)=0, \qquad
	 \lim_{r\to\infty}\exp(3r)g(r)=0, \qquad
	 \int_0^\infty \exp(3r)g(r)\d r<\infty. \]
However, we then have to replace every $\exp({-}\outve\rad)$ in the claims by $h(\Hradius)$ and can only allow the radius factor $\zeta'=1$, where $h=\oof(\Hradius^0)$ is a smooth function on $\interval0\infty$ with $h(r)\to0$ as $r\to\infty$.

Finally, we can replace the above pointwise assumptions by Sobolev assumptions (up to the third derivative of $(\outg-\houtg)$ and the first of $(\outsc-\houtsc)$) as they were introduced by Bartnik in the asymptotically Euclidean setting, \cite{bartnik1986mass}.\smallskip\pagebreak[2]
\end{remark}

We use the definition of mass of an asymptotically hyperbolic manifold as it was given by Chru{\'s}ciel-Herzlich and Michel \cite{chrusciel2003mass,michel2011geometric}.
\begin{definition}[Mass of an asymptotically hyperbolic manifold {\cite{chrusciel2003mass,michel2011geometric}}]\label{Definition_Mass}
For a $\Ck^2_{\decay,\scdecay}$-asymp\-to\-tic\-ally hyperbolic manifold $(\outM,\outg,\outx)$ with $\decay\in\interval{\frac32}*3$ and $\scdecay>3$ the \emph{mass vector} $\mass\in\R^{1,3}$ is defined by $\mass:=\lim\limits_{\rradius\to\infty}\mass(\sphere_\rradius)$, \vspace*{-.5em}where $\mass(\sphere_\rradius(0)):=(\mass_\ui(\sphere_\rradius(0)))_\ui$ and
\[ \mass_\ui(\M) := \frac1{\omega_n}\,\int_{\M} (\momentum(\houtlevi*V_\ui,\cdot)-V_\ui\,\houtdiv\momentum)(\hnu) \d\hmug \qquad\forall\,\ui\in\{0,\dots,3\}, \]
where $\hnu$ is the outer unit-normal of $\M\hookrightarrow\outM$, $\momentum:=\houttr\outfg\,\houtg-\outfg$, $\outfg:=\outg-\houtg$, $V_0:=\cosh(\rad)$, and $V_\oi:=\sinh(\rad)\frac{\outx_\oi}\rad$ for $\oi\in\{1,\dots,3\}$. The mass vector of $(\outM,\outg,\outx)$ is called \emph{timelike} if
\[ \vert\mass\vert_{\R^{3,1}}^2 := {-}\mass_0
^2 + \vert(\mass_\oi)_\oi\vert_{\R^3}^2 := {-}\mass_0^2 + \sum_{\oi=1}^3\mass_\oi^2 < 0 \]
and it is called \emph{future-pointing} if $\mass_0>0$ and \emph{past-pointing} if $\mass_0\relax<0$. Finally, $\mass*:={-}\vert\mass\vert_{\R^{3,1}}>0$ and $\mass*:=\vert\mass\vert_{\R^{3,1}}<0$ denote the \emph{total mass} if the mass vector is timelike future-pointing and timelike past-pointing, respectively.
\end{definition}
\begin{remark}[Some remarks on this definition]\label{remarks_on_mass}
We recall some basic facts on this mass for further information on it, we refer to \cite{chrusciel2003mass,michel2011geometric,dahl2015density} and the citations therein.\smallskip
\begin{enumerate}[nosep,label=(\roman{*})]
\item $\mass$ is well-defined if $\decay>\frac32$ and $\scdecay>3$ and we can replace $\sphere_\rradius(0)$ by any surface $\M<\rradius>$ sufficiently close to $\sphere_\rradius(0)$, see Chru{\'s}ciel-Herzlich's and Michel's results, \cite{chrusciel2003mass}, \cite[Sect.~IV.A.2]{michel2011geometric}.
\item Chru{\'s}ciel-Herzlich have proven that $\mass$ behaves under change of coordinates as to be expected, \cite{chrusciel2003mass}. In particular, there exists an isometry $\hiso$ of the hyperbolic space $\hyperbolicspace$ such that
\begin{itemizeequation}
 \mass(\hiso\circ\outx) = (\mass*,0,\dots,0), \labeleq{Balanced_condition}
\end{itemizeequation}
where $\mass(\hiso\circ\outx)$ denotes the mass with respect to the asymptotically hyperbolic coordinates $\hiso\circ\outx$ of $\outM$ and with respect to this coordinates~\eqref{Decay_assumptions_g} is still satisfied---for some constant $\oc'=\Cof{\oc'}[\oc][\decay][\mass]$ instead of $\oc$. Note that this isometry is (up to a rotation) uniquely determined by this property. Furthermore, this proves that $\mass*$ is a geometric object, \ie independent of the chosen coordinates satisfying~\eqref{Decay_assumptions_g} for some $\decay>\frac32$ and $\scdecay>3$.
\item Sakovich-Dahl proved that the mass depends \emph{continuously} on the metric, \ie $\mass[1]$ is as close to $\mass[2]$ as the $\Ck^2_{\decay,\scdecay}$-asymptotically hyperbolic metrics $\outg[1]$ and $\outg[2]$ are close to each other (in a sufficiently strong sense). In particular, if $\outg[n]*\to\outg*$ (in a sufficiently strong sense) then $\mass(\outg[n]*)\to\mass(\outg*)$, \cite{dahl2015density}. Note that Theorem~\ref{Stability_Theorem_full} and Corollary~\ref{Stability_Corollary_full} imply the analogous result for the CMC-foliation.
\item Neves-Tian and Cederbaum-Cortier-Sakovich called coordinates $\outx$ satisfying~\eqref{Balanced_condition} \emph{balanced}, \cite{NevesTianExistenceCMC_II,cederbaum2015center}. Furthermore, Cederbaum-Cortier-Sako\-vich defined a \emph{center of mass} vanashing for balanced coordinates and proved (under their stricter assumptions) that it corresponds to the \emph{hyperbolic center} of the CMC-foliation which they also defined, \cite{cederbaum2015center} and see Definition~\ref{center}. As side result, we prove the same result using our weaker assumptions.
\item\label{Ricci_version_of_mass}%
 Recently, Herzlich proved that this mass vector is equivalent to (what he called) the \emph{Ricci version of the mass vector} being
\begin{itemizeequation}\labeleq{Ricci_Version_Mass}
 8\pi\,\mass^\ui = {-}\lim_{\rradius\to\infty}\int_{\sphere_\rradius(0)}\outsymbol G(\outsymbol X^{(\ui)},\nu<\rradius>) \d\mug,\qquad
	\outsymbol G := \outric-(\frac12\outsc+1)\outg,
\end{itemizeequation}
see \cite{herzlich2015computing}. Here, the four conformal vector fields $X^{(\ui)}$ are defined using the Poincar\'e ball model of the hyperbolic space, \ie a chart $\outy:\outM\setminus\overline K\to B_1(0)$, via 
\begin{itemizeequation}
	X^{(0)}:=\outy^\oi\,\partial[\outy]_\oi,\qquad
	 X^{(\oi)}:=\rad@\outy^2\,e_\oi-2\,\outy^\oi\,\outy^\oj\partial[\outy]_\oj. \end{itemizeequation}
Note that this means 
\begin{itemizeequation}
	X^{(0)}=\sinh(\rad)\frac{\outx^\oi}\rad\partial[\outx]_\oi, \quad
	 X^{(\oi)}=\partial[\outx]_\oi-\sinh(\rad)\frac{\outx^\oi\,\outx^\oj}{\rad^2}\,\partial[\outx]_\oj+\Oof_\infty(\exp({-}\rad)).
\end{itemizeequation}
Furthermore, we can again replace $\sphere_\rradius(0)$ by any surface $\M<\rradius>$ sufficiently close to $\sphere_\rradius(0)$.
	\end{enumerate}
\end{remark}
\begin{definition}[Hyperbolic hawking mass{, \eg \cite{wang2001mass}}]
If $(\M,\g)$ is a closed hypersurface in a three-dimensional Riemannian manifold, then
\[ \HmHaw := \HmHaw(\M) := (\frac{\volume{\M}}{16\pi})^{\frac12}(1-\frac1{8\pi}\int_{\M}\frac{\H^2-4}2\d\mug) \]
is called (\emph{hyperbolic}) \emph{Hawking mass} of $\M$.
\end{definition}
\begin{remark}[Hawking and total mass]
A direct calculation proves
\[ \vert\HmHaw(\M) - \mass^0\vert \le C\,(\exp({-}\outve\rradius)+\textrm d_*(\M,\sphere_\rradius(0))) \]
for a suitable chosen definition of the \lq metric\rq\ $\textrm d_*$, where $\rradius:=\min_{\M}\rad$. This means the hyperbolic Hawking mass approaches the $0^{\text{th}}$-component for surfaces sufficiently close to the coordinate spheres around the coordinate origin. Remark~\ref{remarks_on_mass}.\ref{Ricci_version_of_mass}, the Gau\ss-Bonnet theorem, the Gau\ss\ equation, and Theorem~\ref{Regularity_Theorem_short} proves $\HmHaw(\M<\Hradius>)\to\mass*$ for $\Hradius\to\infty$, where $\M<\Hradius>$ denote the CMC-leaf of mean curvature $\H<\Hradius>\equiv{-}2\:\frac{\cosh(\Hradius)}{\sinh(\Hradius)}$ constructed in Theorem~\ref{Existence_Theorem}.
\end{remark}

\begin{definition}[{Hyperbolic center of a hypersurface and hyperbolic (coordinate) center of mass, \cite[eq.(9), Def.~3.10]{cederbaum2015center}}]\label{center}
Let $\M\hookrightarrow\hyperbolicspace$ be a closed hypersurface in the hyperbolic space. The point $\centerz(\M)\in\hyperbolicspace$ with
\[ I(\centerz(\M)) = \centerz\relax_{\R^{3,1}}(\M) = \frac{\centerz'(\M)}{\sqrt{{-}\vert\centerz'(\M)\vert_{\R^{3,1}}^2}}, \qquad\qquad
		{\centerz_{}'}^\ui := \int_{\M} I^\ui \d\hmug \]
is called (\emph{coordinated}) \emph{hyperbolic center of $\M$}, where
\[ I=(I^\ui)_\ui:\hyperbolicspace\to\R^{3,1}:\outx\mapsto(\cosh(\rad),\,\frac{\outx^1\sinh(\rad)}\rad\,,\,\frac{\outx^2\sinh(\rad)}\rad\,,\,\frac{\outx^3\sinh(\rad)}\rad) \]
is the natural embedding of $\hyperbolicspace$ to the hyperboloid in the Minkowski spacetime.

If $\outx$ is a $\Ck^2_{\decay,\scdecay}$-asymptotically hyperbolic chart of a Riemannian manifold $(\outM,\outg*)$, then $\outcenterz\in\hyperbolicspace$ with
\[ I(\outcenterz) := \frac{\mass}{\sqrt{{-}\vert\mass\vert^2_{\R^{3,1}}}} \]
is called \emph{hyperbolic} (\emph{coordinate}) \emph{center of mass} (of $\outM$ with respect to $\outx$).
\end{definition}
\begin{remark}[The center of mass and the CMC-foliation]
In \cite{cederbaum2015center}, proved that the CMC-foliation $\{\M<\Hradius>\}_\Hradius$ constructed by Neves-Tian in \cite{NevesTianExistenceCMC_II} characterizes the center of mass in the following way: the (coordinate) hyperbolic center $\centerz(\M<\Hradius>)$ of the leaves of the foliation converge (as $\Hradius\to\infty$) to the hyperbolic (coordinate) center of mass of $(\outM,\outg*)$. Their proof for this result can also be applied to the CMC-foliation constructed here, see Theorem~\ref{Existence_Theorem_full}.
\end{remark}

As mentioned, we frequently use foliations. In the following, we characterize them infinitesimally by their lapse functions and their shift vectors.
\begin{definition}[Lapse functions, shift vectors]
Let $\theta>0$ and $\Hradius_0\in\R$ be constants, $I\supseteq\interval{\Hradius_0-\theta\Hradius}{\Hradius_0+\theta\Hradius}$ be an interval, and $(\outM,\outg*)$ be a Riemannian manifold. A smooth map $\Phi:I\times\M\to\outM$ is called \emph{deformation} of the closed hypersurface $\M=\M<\Hradius_0>=\Phi(\Hradius_0,\M)\subseteq\outM$ if $\Phi<\Hradius>(\cdot):=\Phi(\Hradius,\,{\cdot}\,)$ is a diffeomorphism onto its image $\M<\Hradius>:=\Phi<\Hradius>(\M)$ and $\Phi<\Hradius_0>\equiv\id_{\M}$. The decomposition of $\spartial*_\Hradius\Phi$ into its normal and tangential parts can be written as
\[ \partial[\Hradius]@\Phi = {\rnu<\Hradius>}\,{\nu<\Hradius>} + {\rbeta<\Hradius>}, \]
where $\nu<\Hradius>$ is the outer unit normal to $\M<\Hradius>$. The function $\rnu<\Hradius>:\M<\Hradius>\to\R$ is called \emph{lapse function} and the vector field $\rbeta<\Hradius>\in\X(\M<\Hradius>)$ is called \emph{shift} of $\Phi$. If $\Phi$ is a diffeomorphism, then it is called a \emph{foliation}.\pagebreak[3]
\end{definition}

For notation convenience, we use the following abbreviated form for the contraction of two tensor fields.
\begin{definition}[Tensor contraction]
Let $(\M,\g*)$ be a Riemannian manifold. The \emph{traced tensor product} of a $(0,k)$ tensor field $S$ and a $(0,l)$ tensor field $T$ on $(\M,\g*)$ with $k,l>0$ is defined by
\[ (\trzd ST)_{I_1\dots I_{k-1}\!J_1\dots J_{l-1}} := S_{I_1\dots I_{k-1} K}\,\g^{KL}\,T_{LJ_1\dots J_{l-1}}. \]
This definition is independent of the chosen frame.
\end{definition}
Finally, we specify the definitions of Lebesgue and Sobolev norms on compact Riemannian manifolds which we use throughout this article.
\begin{definition}[Lesbesgue and Sobolev norms]
If $(\M,\g*)$ is a two-dimensional, compact Riemannian manifold without boundary, then the \emph{Lebesgue norms} are defined by
\[ \Vert T\Vert_{\Lp^p(\M)} := (\int_{\M} \vert T\vert_{\g*}^p \d\mug)^{\frac1p}\quad\forall\,p\in\interval*1\infty, \qquad \Vert T\Vert_{\Lp^\infty(\M)} := \mathop{\text{ess\,sup}}\limits_{\M}\,\vert T\vert_{\g*}, \]
where $T$ is an arbitrary measurable function (or tensor field) on $\M$. Correspondingly, $\Lp^p(\M)$ is defined to be the set of all measurable functions (or tensor fields) on $\M$ for which the $\Lp^p$-norm is finite. The \emph{Sobolev norms} are defined by
\[ \Vert T\Vert_{\Wkp^{k+1,p}(\M)} := \Vert T\Vert_{\Lp^p(\M)} + \volume{\M}^{\frac12} \Vert\levi*T\Vert_{\Wkp^{k,p}(\M)}, \qquad
		\Vert T\Vert_{\Wkp^{0,p}(\M)} := \Vert T\Vert_{\Lp^p(\M)}, \]
where $k\in\N_{\ge0}$, $p\in\interval*1*\infty$ and $T$ is any measurable function (or tensor field) on $\M$ for which the $k$-th (weak) derivative exists. Correspondingly, $\Wkp^{k,p}(\M)$ is the set of all such functions (or tensors fields) for which the $\Wkp^{k,p}(\M)$-norm is finite. Furthermore, $\Hk^k(\M)$ denotes $\Wkp^{k,2}(\M)$ for any $k\ge1$ and $\Hk(\M):=\Hk^1(\M)$.\pagebreak[3]
\end{definition}
\begin{remark}\label{Scaling_Of_Sobolev}
We directly see that natural Sobolev- and Poincar\'e inequalities of such a manifold $\M$ are of the form
\[ \Vert T\Vert_{\Lp^2(\M)} \le \cSob\Vert T\Vert_{\Wkp^{1,1}(\M)}, \qquad
		\Vert f-\mean f\Vert_{\Lp^2(\M)} \le \c_{\text{Poincar\'e}}\Vert\levi*f\Vert_{\Lp^1(\M)}. \]
Let us explain a basic scaling property: If $(\M<n>,\g<n>):=(\M,n^2\,\g)$ is a sequence of rescaled versions of $\M$ and $f\in\Ck^\infty(\M)$ is a function, then
\begin{align*}
 \Vert f\Vert_{\Lp^p(\M<n>)} ={}& (\frac{\volume{\M<n>}}{\volume{\M}})^{\frac1p}\Vert f\Vert_{\Lp^p(\M)}, &
 \Vert\levi<n>*f\Vert_{\Lp^p(\M<n>)} ={}& (\frac{\volume{\M<n>}}{\volume{\M}})^{\frac1p-\frac12}\Vert\levi*f\Vert_{\Lp^p(\M)} \\
 \Vert f\Vert_{\Wkp^{k,p}(\M<n>)} ={}& (\frac{\volume{\M<n>}}{\volume{\M}})^{\frac1p}\Vert f\Vert_{\Wkp^{k,p}(\M)},
\end{align*}
\ie these norms behave nicely under scaling. In particular, the Sobolev- and Poincar\'e inequalities are only optimal inequalities on a sequence of functions ${}_nf\in\Ck^\infty(\M<n>)$ if $\Vert {}_nf\Vert_{\Lp^p(\M<n>)}$ and $\volume{\M}^{\frac12}\Vert\hlevi<n>{}_nf\Vert_{\Lp^p(\M<n>)}$ have the same scaling behavior (as $n\to\infty$). This will lead to some obstacle in the rest of the article.
\end{remark}

\begin{notation}[Hyperbolic radii]
If $\M$ is a two-dimensional manifold, then the real number $\Aradius$ with $\volume{\M}=4\pi\,\sinh(\Aradius)^2$ is called (\emph{hyperbolic}) \emph{area radius}.

If $\M$ is a closed hypersurface of a three-dimensional Riemannian manifold and $\M$ has constant mean curvature, then the real number $\Hradius$ with $\H\equiv{-}2\:\frac{\cosh(\Hradius)}{\sinh(\Hradius)}$ is called (\emph{hyperbolic}) \emph{mean curvature radius}.

If $\M\hookrightarrow\R^3$ is a closed hypersurface in the Euclidean (or hyperbolic) space, then the real number $\rradius:=\min_{\M}\rad$ and $\Rradius:=\max_{\M}\rad$ are called \emph{minimal} and \emph{maximal coordinate radius}, respectively.
\end{notation}
\begin{remark}
In the largest part of this article, we assume that these radii are compatible, \ie $\Hradius\le\Aradius+C\le\Rradius+C'\le\Hradius+C''$ and $\rradius\le\Rradius\le(1+\zeta)\rradius$ for some constant $C$, $C'$, $C''$ being small (compared to the radii). For the surfaces studied here, this compatibility is proven as first step in the proof of Theorem~\ref{Regularity_Theorem}, where we use an argument developed by Neves-Tian in \cite{NevesTianExistenceCMC_I,NevesTianExistenceCMC_II}.
\end{remark}

Finally, we need a kind of stability assumption on the CMC-surfaces. More precisely, we do not need \emph{stability} of the CMC-surfaces, but only a sufficient control of the \emph{instability} of the surface, as the author already used in \cite{nerz2015GeometricCharac} (a suggestion by Carla Cederbaum). We note that this is not only a weaker assumption, but the only one we can assume if the mass vector is past-pointing.
\begin{definition}[$\alpha$-controlled instability]\label{ControlledInstability}
Let $\alpha\in\R$ and $\c>0$ be constants. A closed hypersurface $(\M,\g*)$ with constant mean curvature in a three-dimen\-sio\-nal Riemannian manifold $(\outM,\outg*)$ has \emph{$\alpha$-controlled instability} if the eigenvalues of the stability operator are bounded from bellow by $\alpha$, \ie
\[ \int\trtr{\levi*f}{\levi*f}\d\mug \ge \int(\trtr\zFund\zFund+\outric*(\nu,\nu)+\alpha)\,(f-\mean f)^2\d\mug \qquad\forall\,f\in\Ck^1(\M), \]
where $\mean f:=\fint f\d\mug := \volume{\M}^{-1}\,\int f\d\mug$.
\end{definition}
We see that a CMC-hypersurface is non-strictly stable if and only if it has $0$-controlled instability and it is strictly stable if and only if it has $\alpha$-controlled instability with $\alpha>0$. Furthermore, it is obvious that any CMC-hypersurface with ${-}\alpha$-controlled instability has always ${-}\beta$-controlled instability if $\beta>\alpha$.\smallskip

\section{Regularity of the hypersurfaces}\label{Regularity_of_the_hypersurfaces}
\NewDocumentCommand\regsphere{sO{\decay}D(){\Hradius}mO{p}}%
{\mathcal{R}^{\hspace{-.1em}#5}_{\hspace{.15em}#2}(#4)\IfBooleanTF{#1}\relax{_{#3}}}%
\NewDocumentCommand\strregsphere{sO{\decay}D<>{\scdecay}D(){\Hradius}mO{p}}%
{\mathcal{R}^{\hspace{-.1em}#6}_{\hspace{.15em}#2,#3}(#5)\IfBooleanTF{#1}\relax{_{#4}}}%
In this section, we prove the central regularity result for CMC-surfaces. In Remark~\ref{Expectation_on_k}, we explain why it is justified to expect that the regularity given here is optimal, \ie for every decay rate between $\decay\in\interval2*3$ the regularity cannot be strengthened without additional assumptions on $\outg$. Note that we prove the regularity theorems for CMC-surfaces in $\Ck^2_{\decay,\scdecay}$-asymptotically hyperbolic spaces up to decay rate $\decay>2$ (and not $\decay>\frac52$)---if there exist such a CMC-surface for this decay rate. Note furthermore, that it would be sufficient if $\H$ is $\Wkp^{1,p}$-close to the constant ${-}2\:\frac{\cosh(\Hradius)}{\sinh(\Hradius)}$ instead of being this constant, \ie if $\Vert\H+2\frac{\cosh(\Hradius)}{\sinh(\Hradius)}\Vert_{\Wkp^{1,p}(\M)}\le\c\exp((\frac2p-\decay)\Hradius)$, then we get the same results.\smallskip

Let us state the main result of this section.
\begin{theorem}[Regularity of a CMC-surface]\label{Regularity_Theorem}
Let $(\outM,\outg*)$ be a $\Ck^2_{\decay}$-asymptotically hyperbolic manifold with decay rate $\decay>2$. For all constants $\eta\in\interval0*4$, $\zeta\in\interval*0{1-\frac2{\decay}}$, $\c>0$, and $p\in\interval1\infty$, there exist constants $\rradius_0=\Cof{\rradius_0}[\decay][\oc][\zeta][\eta][\c]$ and $C=\Cof[\decay][\oc][\zeta][\eta][\c][p]$ with the following property:

If $\M$ is a closed hypersurfaces in $(\outM,\outg*)$ diffeomorphic to $\sphere$ with constant mean curvature $\H\equiv{-}2\,\frac{\cosh(\Hradius)}{\sinh(\Hradius)}$ and ${-}(4-\eta)\sinh(\Hradius)^{{-}2}$-controlled instability which satisfies
\begin{equation*}\labeleq{class-assumption}
 \Rradius \le (1+\zeta)\rradius, \qquad\rradius\ge\rradius_0, \qquad
	\eta>2\text{ or } \c^{-1}\exp(2\Hradius)\le\volume{\M}\le\c\,\exp(2\Hradius),
\end{equation*}
then
\[ \Vert\zFundtrf\Vert_{\Lp^p(\M)} + \Vert\levi\zFundtrf\Vert_{\Lp^p(\M)} \le C\,\exp((\frac2p-\decay')\Hradius), \qquad\decay'=\frac{\decay}{1+\zeta}>2 \]
In this setting, there exist an isometry $\hiso:\hyperbolicspace^3\to\hyperbolicspace^3$ of the hyperbolic space $\hyperbolicspace^3=(\R^3,\houtg)$ and a function $\graphf\in\Wkp^{2,p}(\sphere_\Hradius(0))$ such that
\[ \hiso(\M)=\hgraph\graphf, \qquad\qquad
	 \Vert\graphf\Vert_{\Wkp^{2,p}(\sphere_\Hradius(0))} \le C\,\exp((2+\frac2p-\decay')\Hradius). \]\smallskip\pagebreak[3]
\end{theorem}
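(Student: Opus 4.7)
The plan is to combine the controlled-instability hypothesis with Simons' identity for the CMC-hypersurface to obtain the $\Lp^p$-bound on $\zFundtrf$, and then invoke the almost-umbilic regularity theorem (Theorem~\ref{W2pSurfaceRegularity}) to produce the graph representation.

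I would first establish \emph{compatibility of the radii}. Using the pinching $\Rradius\le(1+\zeta)\rradius$ with $\zeta<1-\frac2\decay$, the asymptotic decay $\outg=\houtg+\Oof_2(\exp({-}\decay\rad))$, and the CMC-hypothesis, one argues as Neves--Tian in \cite{NevesTianExistenceCMC_I,NevesTianExistenceCMC_II} that the mean-curvature radius $\Hradius$, the area radius $\Aradius$, and the coordinate radii $\rradius,\Rradius$ coincide up to an additive constant depending on $\oc,\decay,\zeta,\eta,\c$. Consequently $\exp({-}\decay\rad)\le C\exp({-}\decay'\Hradius)$ pointwise on $\M$, with $\decay'=\decay/(1+\zeta)>2$, matching the decay rate claimed in the theorem.

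Next I would derive the $\Lp^2$-bound on $\zFundtrf$. The contracted Gauss equation and the asymptotic decay of $\outric$ give
\begin{equation*}
 |\zFund|^2+\outric(\nu,\nu)=\frac{2}{\sinh(\Hradius)^2}+|\zFundtrf|^2+\Oof(\exp({-}\decay\rad))\quad\text{on }\M,
\end{equation*}
a quantity whose leading value $2\sinh(\Hradius)^{-2}$ is precisely the one realised by a hyperbolic geodesic sphere of radius $\Hradius$. Simons' identity together with the same Gauss equation yields a Bochner-type identity
\begin{equation*}
 \tfrac12\laplace|\zFundtrf|^2=|\levi\zFundtrf|^2+\Bigl(|\zFundtrf|^2-\frac{2}{\sinh(\Hradius)^2}\Bigr)|\zFundtrf|^2+\Oof(\exp({-}\decay\rad))|\zFundtrf|.
\end{equation*}
Integrating this over $\M$ represents $\int|\levi\zFundtrf|^2\d\mug$ as a weighted integral of $|\zFundtrf|^2$ plus ambient error, while the $-(4-\eta)\sinh(\Hradius)^{-2}$-controlled-instability inequality tested against $f=|\zFundtrf|$ (regularized near $\{\zFundtrf=0\}$ and combined with the Kato bound $|\levi|\zFundtrf||\le|\levi\zFundtrf|$) produces a lower bound for the same Dirichlet energy. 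In the comparison of these two estimates, the leading $\sinh(\Hradius)^{-2}$-contributions cancel up to a positive $\eta$-margin, the remaining variance $\int(|\zFundtrf|-\overline{|\zFundtrf|})^2$ is controlled by the Poincar\'e inequality on $\M$ (whose first non-trivial eigenvalue scales as $\sinh(\Hradius)^{-2}$ in view of Remark~\ref{Scaling_Of_Sobolev}), and one arrives at a closed inequality of the form
\begin{equation*}
 \eta\sinh(\Hradius)^{-2}\int_\M|\zFundtrf|^2\d\mug\le C\int_\M\exp({-}\decay\rad)|\zFundtrf|\d\mug+C\exp({-}2\decay'\Hradius)\volume{\M}.
\end{equation*}
Cauchy--Schwarz then gives $\Vert\zFundtrf\Vert_{\Lp^2(\M)}\le C\exp((1-\decay')\Hradius)$, which matches the claim for $p=2$. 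If $\eta>2$ the stability coefficient $(\eta-2)\sinh(\Hradius)^{-2}$ is already positive and the Poincar\'e step is not required; this is why the area hypothesis can be dropped in that regime.

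Finally, I would bootstrap from $\Lp^2$ to $\Lp^p$ by multiplying the Simons PDE by $|\zFundtrf|^{p-2}\zFundtrf$, integrating, and running a Moser iteration using the Sobolev constants rescaled to $\volume{\M}\sim\exp(2\Hradius)$ as in Remark~\ref{Scaling_Of_Sobolev}; the companion bound on $\levi\zFundtrf$ follows from Calder\'on--Zygmund estimates applied to the same equation. At this stage $\M$ is pointwise almost umbilic with mean curvature exactly $-2\cosh(\Hradius)/\sinh(\Hradius)$, so Theorem~\ref{W2pSurfaceRegularity}, applied after composition with a hyperbolic isometry $\hiso$ that centers $\M$ near the coordinate origin of $\hyperbolicspace^3$, furnishes the graph function $\graphf\in\Wkp^{2,p}(\sphere_\Hradius(0))$ together with its $\Wkp^{2,p}$-bound. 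The main obstacle is the sharp cancellation of the leading $\sinh(\Hradius)^{-2}$-terms in the integral bound step: the thresholds $-(4-\eta)\sinh(\Hradius)^{-2}$ and $\zeta<1-2/\decay$ are calibrated precisely so that the leading hyperbolic contributions cancel up to the $\eta$-margin and the curvature error $\exp({-}\decay'\Hradius)$ remains strictly smaller than that margin. Loosening either bound would collapse the closed inequality, which is why the hypotheses of the theorem are so tightly tied to each other.
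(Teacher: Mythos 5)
Your overall architecture—Neves--Tian-style radius comparison, a Simons/stability integral estimate for $\Vert\zFundtrf\Vert_{\Lp^2}$, bootstrap to $\Lp^p$, then Theorem~\ref{W2pSurfaceRegularity}—does track the paper, but two steps as written would not close.

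First, the integral estimate. You test the controlled-instability inequality with $f=\vert\zFundtrf\vert$ and then pass from $\Vert\levi\vert\zFundtrf\vert\Vert_{\Lp^2}^2$ to $\Vert\levi\zFundtrf\Vert_{\Lp^2}^2$ via the plain Kato inequality $\vert\levi\vert\zFundtrf\vert\vert\le\vert\levi\zFundtrf\vert$. That gives $\Vert\levi\vert\zFundtrf\vert\Vert_{\Lp^2}^2\le\Vert\levi\zFundtrf\Vert_{\Lp^2}^2$ with constant~$1$. But when you combine this with the Simons-identity integration (which, after the Gauss equation and the assumptions, reads $\Vert\levi\zFundtrf\Vert_{\Lp^2}^2+2\sinh(\Hradius)^{-2}\Vert\zFundtrf\Vert_{\Lp^2}^2-\Vert\zFundtrf\Vert_{\Lp^4}^4=\Oof(\exp(2(1-\decay')\Hradius))$) and the instability lower bound ($\Vert\levi\vert\zFundtrf\vert\Vert_{\Lp^2}^2\ge(1-\delta)\Vert\zFundtrf\Vert_{\Lp^4}^4+(\eta-2)\sinh(\Hradius)^{-2}\Vert\zFundtrf\Vert_{\Lp^2}^2-\text{err}$), the coefficient in front of $\Vert\levi\zFundtrf\Vert_{\Lp^2}^2$ in the final closed inequality comes out as $1-1=0$ (or, with $\delta$-bookkeeping, negative): the Dirichlet energy and the $\Lp^4$-term drop out of the estimate, and the Poincar\'e inequality you invoke does not supply an upper bound for $\Vert\levi\zFundtrf\Vert_{\Lp^2}^2$ in terms of $\Vert\zFundtrf\Vert_{\Lp^2}^2$ (it goes the other way). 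The paper's Lemma~\ref{Bootstrap_for_trace_free_second_fundamental_form__Lp4} instead uses the Schoen--Simon--Yau refinement $(2-\delta)\vert\levi\vert\zFundtrf\vert\vert^2\le\vert\levi\zFundtrf\vert^2+C_\delta\vert\outric\vert_{\outg*}^2$ (Lemma~\ref{Schoen-Yau75}); it is precisely the factor $(2-\delta)$ that makes the $\Vert\levi\zFundtrf\Vert^2$-terms cancel with a strictly positive leftover coefficient, so that both $\exp({-}2\Hradius)\Vert\zFundtrf\Vert_{\Hk}^2$ and $\Vert\zFundtrf\Vert_{\Lp^4}^4$ are controlled simultaneously.

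Second, the bootstrap. Multiplying the Simons PDE by $\vert\zFundtrf\vert^{p-2}\zFundtrf$ and running Moser iteration presupposes a Sobolev inequality on $\M$ with a constant that is uniform in $\Hradius$. A priori you only know that the intrinsic metric of $\M$ satisfies $\Lp^2$-bounds on its Gau\ss\ curvature; this does not by itself give a uniform Sobolev constant. The paper makes this step explicit: it uses the $\Lp^2$-smallness of $\sc-2\sinh(\Hradius)^{-2}$ to invoke the conformal-parametrization theorem and produce a uniformizing conformal factor (Proposition~\ref{Intrinsic_Regularity_of_the_spheres}), only after which the Sobolev inequality, Stampacchia/elliptic regularity, and the $\Lp^\infty$- and $\Wkp^{1,p}$-bounds on $\zFundtrf$ become available. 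This intermediate uniformization is a genuine logical step, not a formality.

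Finally, you wave at the Neves--Tian radius comparison, but in the case $\eta>2$ (where the volume two-sidedness is not assumed) the paper needs the Christodoulou--Yau/Huisken--Yau test-function argument with $\varphi_\oi=\outx^\oi\circ\psi^{-1}$ and $\int\varphi_\oi\d\mug=0$ in the controlled-instability inequality, combined with Gau\ss--Bonnet, to establish $\c^{-1}\exp(2\Hradius)\le\volume{\M}\le\c\exp(2\Hradius)$. Your sketch omits why the assumption ``$\eta>2$ \emph{or} volume bound'' suffices.
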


Now, let us define the coordinate-invariant class of surfaces, we are working with.
\begin{definition}[Regular spheres]\label{Regular_sphere}
Let $(\outM,\outg*)$ be a three-dimensional Riemannian manifold. A hypersurface $\M\hookrightarrow(\outM,\outg*)$ is called \emph{regular sphere with radius $\Hradius$ and constants $\decay\in\interval0*3$, $p\in\interval2*\infty$, and $\c\ge0$}, in symbols $\M\in\regsphere{\c}$, if $\M$ is a topological sphere with constant mean curvature $\H\equiv{-}2\,\frac{\cosh\Hradius}{\sinh\Hradius}$ satisfying
\begin{equation*}
	\c^{-1}\,\exp(2\Hradius) \le \volume{\M} \le \c\,\exp(2\Hradius), \qquad
	\Vert \vert\outric*+2\hspace{.05em}\outg*\vert_{\outg*} \Vert_{\Lp^p(\M)} \le \c\,\exp((\frac2p-\decay)\,\Hradius),
\end{equation*}
where $\nicefrac2p:=0$ if $p=\infty$. Furthermore, $\M\in\regsphere{\c}$ is \emph{strictly regular}, in symbols $\M\in\strregsphere{\c}$, if additionally
\[ \Vert\outsc\Vert_{\Lp^1(\M)} \le \c\,\exp((2-\scdecay)\Hradius). \]
\end{definition}
\begin{remark}[An alternative assumption on the scalar curvature]\label{AlternativeAssumption_outsc_regsphere}
Note that we can actually replace the assumption $\Vert\outsc\Vert_{\Lp^1(\M)} \le \c\,\exp((2-\scdecay)\Hradius)$ by $\Vert(\outsc+6)^-\Vert_{\Lp^1(\M)} \le \c\,\exp((2-\scdecay)\Hradius)$, where $(\outsc+6)^-:=\min\{0,\outsc+6\}$ denotes the negative part of $\outsc+6$.
\end{remark}

Now, we prove that any regular sphere is in a $\Lp^4$-sense almost umbilic. The proof is almost identical to the one of \cite[Lemma~2.3]{nerz2015GeometricCharac} and familiar with the one of \cite[Prop.~4.3]{NevesTianExistenceCMC_I} (and its analog in \cite{NevesTianExistenceCMC_II}). However, as some technical steps are slightly different, we recall the proof for the readers convenience.
\begin{lemma}[\texorpdfstring{First $\Hk$- and $\Lp^4$}{H1 and Lp-4}-estimates for the second fundamental form]\label{Bootstrap_for_trace_free_second_fundamental_form__Lp4}
Let $(\M,\g*)\in\regsphere{\c}$ be a regular sphere with ${-}(4-\eta)\,\sinh(\sigma)^{{-}2}$-controlled instability within a three-dimensional Riemannian manifold $(\outM,\outg*)$, where $\decay\in\interval1*3$, $p\in\interval*2*\infty$, $\c\ge0$, and $\eta\in\interval0*4$ are constants. There are constants $\Hradius_0=\Cof{\Hradius_0}[\decay][\eta][p][\c]$ and $C=\Cof[\decay][\eta][p][\c]$ such that
\[ \exp(-2\,\Hradius)\,\Vert\zFundtrf\Vert_{\Hk(\M)}^2 + \Vert\zFundtrf\Vert_{\Lp^4(\M)}^4 \le C\,\exp(2(1-\decay)\Hradius) \nopagebreak \]
if $\Hradius>\Hradius_0$.\pagebreak[2]
\end{lemma}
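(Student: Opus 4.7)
The plan is to combine three ingredients: the Gauss--Bonnet theorem (since $\M$ is topologically a sphere), the integrated Simons identity for the trace-free second fundamental form on the CMC hypersurface $\M$, and the ${-}(4-\eta)/\sinh(\Hradius)^2$-controlled instability tested against $f=|\zFundtrf|$. The algebraic heart of the argument is the identity $\H^2/2-2=2/\sinh(\Hradius)^2$ valid on a CMC surface with $\H\equiv{-}2\,\cosh(\Hradius)/\sinh(\Hradius)$, which together with $\outric(\nu,\nu)+2=\Oof(\exp({-}\decay\Hradius))$ and the prescribed instability coefficient produces, on the mean-value term arising from testing stability against $|\zFundtrf|$, the crucial residual coefficient $(\eta-2)/\sinh(\Hradius)^2$.

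First I would apply the Gauss equation $2\,K_{\M}=\outsc-2\,\outric(\nu,\nu)+\H^2/2-|\zFundtrf|^2$ together with $\int_{\M}K_{\M}\,\d\mug=4\pi$, obtaining
\[
	\int_{\M}|\zFundtrf|^2\,\d\mug \;=\; \frac{2\,|\M|}{\sinh(\Hradius)^2}-8\pi+\int_{\M}(\outsc+6)\,\d\mug-2\int_{\M}(\outric(\nu,\nu)+2)\,\d\mug.
\]
Because $|\outsc+6|\le 3\,|\outric+2\outg|$, H\"older combined with the regular-sphere bound $\Vert|\outric+2\outg|\Vert_{\Lp^p(\M)}\le\c\exp((2/p-\decay)\Hradius)$ and with $|\M|\le\c\exp(2\Hradius)$ yields the crude preliminary estimate $\Vert\zFundtrf\Vert_{\Lp^2(\M)}^2\le C$. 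Next I would write down the Simons identity for $\zFundtrf$ on a CMC surface; in dimension two the pointwise identity $\zFundtrf\cdot\zFundtrf=\tfrac{|\zFundtrf|^2}{2}\,g$ kills the $\H$-cubic contributions, and integration by parts against $\zFundtrf$ produces a relation of the schematic form $\int_{\M}|\levi\zFundtrf|^2\,\d\mug=\int_{\M}|\zFundtrf|^4\,\d\mug+\int_{\M}Q\cdot|\zFundtrf|^2\,\d\mug$, where $Q$ combines a fixed hyperbolic-background piece with a deviation bounded by $C\,|\outric+2\outg|$. Simultaneously, applying the controlled-instability inequality to (a smoothing of) $f=|\zFundtrf|$ and invoking Kato's inequality $|\levi|\zFundtrf||^2\le|\levi\zFundtrf|^2$ gives
\[
	\int_{\M}|\levi\zFundtrf|^2\,\d\mug \;\ge\; \int_{\M}\!\biggl(|\zFundtrf|^2+\frac{\eta-2}{\sinh(\Hradius)^2}+\Oof(\exp({-}\decay\Hradius))\biggr)(|\zFundtrf|-m)^2\,\d\mug,
\]
with $m:=\mean{|\zFundtrf|}\le|\M|^{-1/2}\,\Vert\zFundtrf\Vert_{\Lp^2(\M)}$.

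Then I would subtract these two integral relations, expand $|\zFundtrf|^2(|\zFundtrf|-m)^2=|\zFundtrf|^4-2m|\zFundtrf|^3+m^2|\zFundtrf|^2$ so that the leading quartic terms cancel, and match the hyperbolic-background part of $Q$ with the $\frac{\eta-2}{\sinh(\Hradius)^2}$ coefficient on the stability side. H\"older applied to the remaining error-curvature integral, combined with interpolation between $\Lp^2$ and $\Lp^4$, allows me to absorb an $\varepsilon\,\Vert\zFundtrf\Vert_{\Lp^4(\M)}^4$ piece back into the left-hand side. What remains is bounded by $C\exp(2(1-\decay)\Hradius)$ plus a residual $\tfrac{\max(2-\eta,0)}{\sinh(\Hradius)^2}\,\Vert\zFundtrf\Vert_{\Lp^2(\M)}^2$ and cross-terms in $m$ that I control via $m\le|\M|^{-1/2}\Vert\zFundtrf\Vert_{\Lp^2(\M)}$ and the crude bound. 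If $\eta>2$ the residual has the favourable sign and is dropped; if $\eta\le 2$ the alternative volume lower bound $\c^{-1}\exp(2\Hradius)\le|\M|$ from \eqref{class-assumption} together with the crude $\Lp^2$-bound forces this residual to be at most $C\exp({-}2\Hradius)\le C\exp(2(1-\decay)\Hradius)$ whenever $\decay\le 3$. Finally the $\Hk$-contribution is recovered from $\Vert T\Vert_{\Hk(\M)}^2\le 2\,\Vert T\Vert_{\Lp^2(\M)}^2+2\,|\M|\,\Vert\levi T\Vert_{\Lp^2(\M)}^2$, $|\M|\le\c\exp(2\Hradius)$, and the Gauss--Bonnet identity (which, once the $\Lp^4$-bound is in hand, self-improves the preliminary $\Lp^2$-bound to $\Vert\zFundtrf\Vert_{\Lp^2(\M)}^2\le C\exp((4-2\decay)\Hradius)$ via the area-deficit interpretation).

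The hard part will be arranging that the quartic $|\zFundtrf|^4$ contributions from the Simons identity and from the stability inequality cancel \emph{exactly} while the remaining linear-in-$|\zFundtrf|^2$ coefficients combine into the tractable residual $(\eta-2)/\sinh(\Hradius)^2$; this is precisely what dictates the specific form ${-}(4-\eta)/\sinh(\Hradius)^2$ of the controlled-instability coefficient in the hypothesis. The resulting residual has indefinite sign, and the dichotomy ``$\eta>2$ or $\c^{-1}\exp(2\Hradius)\le|\M|\le\c\exp(2\Hradius)$'' in~\eqref{class-assumption} is calibrated precisely to accommodate both possibilities: the $\eta\le 2$ branch requires careful combination of the volume lower bound with Poincar\'e-type control of the cross-terms in $m$, and bookkeeping these cross-terms is the main technical subtlety.
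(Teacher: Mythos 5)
Your overall scaffolding — integrate the Simons identity for $\zFundtrf$, test the controlled-instability inequality against $f=|\zFundtrf|$, exploit the algebraic identity $\tfrac{\H^2}2-2=2\sinh(\Hradius)^{-2}$, and absorb quartic error pieces by Young's inequality — is exactly the paper's strategy, and your preliminary Gauss--Bonnet bound $\Vert\zFundtrf\Vert_{\Lp^2(\M)}^2\le C$ is correct. The gap is your use of the plain Kato inequality $|\levi|\zFundtrf||^2\le|\levi\zFundtrf|^2$ in place of the Schoen--Simon--Yau refinement
\[
	(2-\delta)\,\bigl|\levi*|\zFundtrf|\bigr|^2 \;\le\; |\levi*\zFundtrf|^2 + C_\delta\,|\outric|_{\outg*}^2,
\]
which the paper records as \eqref{Regularity_of_the_spheres_Schoen-Simon-Yau} and proves (without any minimality hypothesis) in Lemma~\ref{Schoen-Yau75}. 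With the plain Kato factor $1$, chaining the stability inequality through $\Vert\levi|\zFundtrf|\Vert_{\Lp^2}^2\le\Vert\levi\zFundtrf\Vert_{\Lp^2}^2$ and then substituting the integrated Simons identity $\Vert\levi\zFundtrf\Vert_{\Lp^2}^2\approx\Vert\zFundtrf\Vert_{\Lp^4}^4-2\sinh(\Hradius)^{-2}\Vert\zFundtrf\Vert_{\Lp^2}^2$ cancels not only the $\Vert\zFundtrf\Vert_{\Lp^4}^4$ terms (as you intend) but \emph{also} the $\Vert\levi\zFundtrf\Vert_{\Lp^2}^2$ terms, with coefficient exactly zero. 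What survives is at best a favourable $\eta\,\sinh(\Hradius)^{-2}\,\Vert\zFundtrf\Vert_{\Lp^2}^2$-bound (incidentally, your bookkeeping gives $(\eta-2)/\sinh^2$ for this coefficient, but after the extra $2/\sinh^2$ coming from the Gauss-equation side the correct residual is $\eta/\sinh^2\ge0$, so the dichotomy you invoke is unnecessary — and your ``$C\exp({-}2\Hradius)\le C\exp(2(1-\decay)\Hradius)$ whenever $\decay\le3$'' is false for $\decay\in(2,3)$). But an $\Lp^2$-bound alone gives you neither $\Vert\levi\zFundtrf\Vert_{\Lp^2}^2$ nor $\Vert\zFundtrf\Vert_{\Lp^4}^4$: the Simons identity relates those two to each other, not to $\Vert\zFundtrf\Vert_{\Lp^2}^2$, so the claimed $\Hk$- and $\Lp^4$-estimates cannot be recovered, and your final remark about ``self-improving'' via Gauss--Bonnet assumes the $\Lp^4$-bound you have not obtained.

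The Schoen--Simon--Yau factor $(2-\delta)$ is precisely what closes this: after substituting the Simons identity one keeps a coefficient $\approx\tfrac12$ in front of $\Vert\levi\zFundtrf\Vert_{\Lp^2}^2$, which together with the positive $\eta\sinh(\Hradius)^{-2}$ coefficient on $\Vert\zFundtrf\Vert_{\Lp^2}^2$ yields the final inequality
\[
	\Vert\levi*\zFundtrf\Vert_{\Lp^2(\M)}^2 + \bigl(4\,\sinh(\Hradius)^{-2}-\alpha-C\exp({-}\decay\Hradius)\bigr)\,\Vert\zFundtrf\Vert_{\Lp^2(\M)}^2 \;\le\; C_\delta\,\exp(2(1-\decay)\Hradius),
\]
from which the $\Lp^4$-bound follows by feeding $\Vert\levi\zFundtrf\Vert_{\Lp^2}^2$ and $\Vert\zFundtrf\Vert_{\Lp^2}^2$ back into the Simons identity. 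Without the improved Kato inequality your argument collapses to an $\Lp^2$-estimate only.
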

\begin{proof}
As already done in \cite[Prop.~3.3]{metzger2007foliations}, \cite{NevesTianExistenceCMC_I,NevesTianExistenceCMC_II}, \cite[Prop.~2.1]{nerz2015CMCfoliation}, \cite[Lemma~2.3]{nerz2015GeometricCharac}, we first integrate $\trtr{\laplace\zFundtrf}{\zFundtrf}$ and then integrate it by parts. As in \cite[Lemma~2.3]{nerz2015GeometricCharac}, we do not use the Simon's identity for $\laplace\zFund$, but the following (equivalent but shorter) formula
\[ \levi*\div\zFund = \laplace\zFund - \div_1(\outrc_{\cdot\cdot\cdot\nu}) - \trzd\ric\zFund + \rc_{\cdot\ii\ij\cdot}\,\zFund^{\ii\ij}. \]
which can be proven equivalent to the Simon's identity using normal coordinates and the Codazzi equation.\footnote{Actually, this is true for every hypersurface $\M\hookrightarrow(\outM,\outg*)$ in any Riemannian manifolds $(\outM,\outg*)$.}
As we assumed $\dim\M=2$ (and $\dim\outM=3$), we know $\ric=\frac12\sc\g$, $\riem_{\ik\ii\ij\il}=\frac12\sc(\g_{\ii\ik}\g_{\ij\il}-\g_{\ii\il}\g_{\ij\ik})$. Therefore, a integration by parts proves
\begin{align*}
 \Vert\levi*\zFundtrf\Vert_{\Lp^2(\M)}^2
 ={}& \int\trzd{\div\zFund}{\div\zFundtrf}\d\mug - \int\outrc(\levi\zFundtrf,\nu)\d\mug \\
		&	- \frac12\int_{\M}\sc(\trtr\zFundtrf\zFundtrf + (\g_{\ii\ik}\g_{\ij\il}-\g_{\ii\il}\g_{\ij\ik})\,\zFund^{\ij\ik}\,\zFundtrf^{\ii\il})\d\mug \\
 ={}& 2\,\Vert\outric_\nu\Vert_{\Lp^2(\M)}^2 - \int\sc\trtr\zFundtrf\zFundtrf\d\mug,
\end{align*}
where we used $\H\equiv{-}2\,\frac{\cosh(\Hradius)}{\sinh(\Hradius)}$. Thus, the Gau\ss\ equation and the assumptions $\outric*$ imply
\[ \vert\Vert\levi*\zFundtrf\Vert_{\Lp^2(\M)}^2 + \int (\outsc - 2\,\outric*(\nu,\nu) + \frac12\H^2 - \trtr\zFundtrf\zFundtrf)\trtr\zFundtrf\zFundtrf\d\mug \vert \le C\,\exp(2(1-\decay)\,\Hradius). \]
Now, we use the assumptions on $\H$ and $\outric$ to get
\begin{equation*}\labeleq{Bootstrap_for_trace_free_second_fundamental_form__Lp4__1} \vert\Vert\levi*\zFundtrf\Vert_{\Lp^2(\M)}^2 + 2\,\sinh(\Hradius)^{-2}\,\Vert\zFundtrf\Vert_{\Lp^2(\M)}^2 - \Vert\zFundtrf\Vert_{\Lp^4(\M)}^4 \vert \le C_\delta\,\exp(2(1-\decay)\,\Hradius) + \delta\,\Vert\zFundtrf\Vert_{\Lp^4(\M)}^4, \end{equation*}
where $\delta>0$ is arbitrary and $C_\delta$ depends on this $\delta$. Note that we did not use the control of the instability so far.\pagebreak[2]

The ${-}\alpha:={-}(4-\eta)\,\sinh(\Hradius)^{{-}2}$-control of the instability of $\M$ for $f:=\vert\zFundtrf\vert_{\g*}$ means
\[ \Vert\levi*f\Vert_{\Lp^2(\M)}^2 \ge \int(\outric*(\nu,\nu)+\trtr\zFund\zFund-\alpha)(f-\mean f)^2 \d\mug. \]
Inserting the assumption on $\H$, we get
\begin{align*}
 \Vert\levi*f\Vert_{\Lp^2(\M)}^2
 \ge{}&	\int \trtr\zFundtrf\zFundtrf\,f^2\d\mug
				+ \int(\outric*(\nu,\nu)+2+\frac2{\sinh(\Hradius)^2}-\alpha)\,f^2 \d\mug \\
			&	+ \int(\H-\meanH)^2\,f^2 \d\mug
				- 2\,\mean f\int(\outric(\nu,\nu)+\trtr\zFund\zFund-\alpha)f\d\mug \\
			&	+ \mean f^2\int\outric(\nu,\nu)+\trtr\zFund\zFund-\alpha\d\mug \\
 \ge{}&	(1-\delta)\,\Vert\zFundtrf\Vert_{\Lp^4(\M)}^4
				+ (2\,\sinh(\sigma)^{-2}-\alpha)\,\Vert\zFundtrf\Vert_{\Lp^2(\M)}^2 \\
			&	+ C_\delta\,\Vert\H^2-\meanH^2\Vert_{\Lp^2(\M)}^2
				- C\,\exp((1-\decay)\,\Hradius)\,\,\mean f\,\Vert\zFundtrf\Vert_{\Lp^2(\M)} \\
			&	- C\,\exp((2-\decay)\,\Hradius)\,\mean f^2
				- C_\delta\,\exp((2-2\,\decay)\,\Hradius),
\end{align*}
where we used the assumptions on $\outric*(\nu,\nu)$ and $\H$ as well as $f\ge0$ and $\mean f\ge0$ in the second step. Using the lower bound on $\volume{\M}$, the assumption on $\H-\mean\H$, and $\mean f=\fint\vert\zFundtrf\vert_{\g*}\d\mug\le \volume{\M}^{{-}\frac12}\,\Vert\zFundtrf\Vert_{\Lp^2(\M)}$, we get
\begin{align*} \labeleq{Bootstrap_for_trace_free_second_fundamental_form__Lp4__2}
 \Vert\levi*\vert\zFundtrf\vert_{\g*}\Vert_{\Lp^2(\M)}^2
 \ge{}&	(1-\delta)\,\Vert\zFundtrf\Vert_{\Lp^4(\M)}^4
				+ (2\,\sinh(\sigma)^{-2}-\alpha-C\,\exp({-}\decay\,\Hradius))\,\Vert\zFundtrf\Vert_{\Lp^2(\M)}^2 \\
			&	- C_\delta\,\exp((2-2\,\decay)\,\Hradius).
\end{align*}
In \cite[(1.28)]{schoen1975curvature}, Schoen-Simon-Yau proved
\begin{equation*}\labeleq{Regularity_of_the_spheres_Schoen-Simon-Yau}
 (2-\delta)\,\trtr{\levi*\left|\zFundtrf*\right|_{\g*}}{\levi*\left|\zFundtrf*\right|_{\g*}}
	\le \trtr{\levi*\zFundtrf}{\levi*\zFundtrf} + C_\delta\,\vert\outric\vert_{\outg*}^2
\end{equation*}
for minimal surfaces, where we note that Schoen-Simon-Yau did not use their minimality condition to prove the above inequality.\footnote{In fact, they prove this inequality by brilliant algebraic argument in a suitable chosen chart.} For the readers convenience, we repeat their proof of this inequality in our notation in Lemma~\ref{Schoen-Yau75}. 

Now, \eqref{Bootstrap_for_trace_free_second_fundamental_form__Lp4__1}, \eqref{Bootstrap_for_trace_free_second_fundamental_form__Lp4__2}, and \eqref{Regularity_of_the_spheres_Schoen-Simon-Yau} imply
\begin{align*}\MoveEqLeft
 \Vert\levi*\zFundtrf\Vert_{\Lp^2(\M)}^2 + 2\,\sinh(\Hradius)^{-2}\,\Vert\zFundtrf\Vert_{\Lp^2(\M)}^2 \\
	\le{}& (1-\delta)\,\Vert\levi*\zFundtrf\Vert_{\Lp^2(\M)}^2
		- (2\,\sinh(\Hradius)^{{-}2}-\alpha-C\,\exp({-}\decay\,\Hradius))\,\Vert\zFundtrf\Vert_{\Lp^2(\M)}^2 + C_\delta\,\exp(2(1-\decay)\,\Hradius)
\end{align*}
and we get
\begin{equation*}
 \Vert\levi*\zFundtrf\Vert_{\Lp^2(\M)}^2 + (4\,\sinh(\Hradius)^{-2}-\alpha-C\,\exp({-}\decay\,\Hradius))\,\Vert\zFundtrf\Vert_{\Lp^2(\M)}^2 \\
	\le C_\delta\,\exp(2(1-\decay)\,\Hradius).
\end{equation*}
With $4\sinh(\Hradius)^{{-}2}-\alpha\ge\eta\sinh(\Hradius)^{-2}$, this proves the claim for sufficiently large~$\Hradius$, where we keep \eqref{Bootstrap_for_trace_free_second_fundamental_form__Lp4__1} in mind.
\end{proof}
By the above and the Gau\ss\ equation, we have proven a $\Lp^2$-estimate on the Gau\ss\ curvature. As in \cite{nerz2015GeometricCharac}, we use this control to conclude that there exists a \lq good\rq\ conformal parametrization of any regular sphere. This proof is almost identical to the one of \cite[Prop~2.4]{nerz2015GeometricCharac}. However, as some technical details are different, we recall the proof nevertheless for the readers convenience.
\begin{proposition}[Regularity of the spheres, intrinsic version]\label{Intrinsic_Regularity_of_the_spheres}
Let $(\M,\g*)\in\regsphere{\c}$ be a hypersurface with ${-}(4-\eta)\,\sinh(\sigma)^{{-}2}$-controlled instability within a three-dimensional Riemannian manifold $(\outM,\outg*)$, where $\decay\in\interval2*3$, $p\in\interval*2*\infty$, $p^*\le p$, $\c\ge0$, and $\eta\in\interval0*4$ are constants. There are constants $\Hradius_0=\Cof{\Hradius_0}[\eta][p][\c]$ and $C=\Cof[\eta][p][\c][p^*]$ and a conformal parametrization $\varphi:\sphere\to\M$ with corresponding conformal factor $\conformalf\in\Hk^2(\sphere)$, \ie $\varphi^*\g*=\exp(2\,\conformalf)\,\sinh(\Hradius)^2\,\sphg*$, such that
\begin{equation*}
 \Vert\conformalf\Vert_{\Wkp^{2,p^*}(\sphere,\sphg*)} \le C\,\exp((2-\decay)\,\Hradius), \qquad
 \Vert\zFundtrf\Vert_{\Wkp^{1,p^*}(\M)} \le C\,\exp((1+\frac2p-\decay)\,\Hradius) \labeleq{Regularity_of_the_spheres__k}
\end{equation*}
if $\Hradius>\Hradius_0$, where $\sphg*$ denotes the standard metric of the Euclidean unit sphere and $p*:=p$ if $p\in\interval2\infty$ and $p^*<\infty$ arbitrary if $p=\infty$. In particular, the Sobolev inequality holds on $\M$.\pagebreak[2]
\end{proposition}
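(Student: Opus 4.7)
The plan is to combine the $\Lp^4$-control of $\zFundtrf$ supplied by Lemma~\ref{Bootstrap_for_trace_free_second_fundamental_form__Lp4} with the uniformization theorem, obtaining a conformal parametrization whose conformal factor satisfies a Liouville-type equation that can be analyzed by linear elliptic theory on the round sphere. The Gauss equation for a $2$-surface in a $3$-manifold gives
\[
 2K = \outsc - 2\,\outric(\nu,\nu) + \tfrac12\H^2 - |\zFundtrf|_{\g*}^2,
\]
so that from asymptotic hyperbolicity ($\outric\approx{-}2\,\outg$, $\outsc\approx{-}6$) and $\H\equiv{-}2\tfrac{\cosh(\Hradius)}{\sinh(\Hradius)}$ we get pointwise $K = \sinh(\Hradius)^{-2}-\tfrac12|\zFundtrf|_{\g*}^2+\Oof(\exp({-}\decay\Hradius))$. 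In particular, integration and Gauss--Bonnet show that $K$ is close in $\Lp^1$ to $\sinh(\Hradius)^{-2}$.

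By the uniformization theorem, there exists a conformal diffeomorphism $\varphi_0\colon\sphere\to\M$ with $\varphi_0^*\g*=\exp(2\conformalf_0)\sinh(\Hradius)^2\sphg*$ for some $\conformalf_0\in\Hk^2(\sphere,\sphg*)$. Pre-composing with a suitable Möbius transformation $\Phi$ of $\sphere$, whose existence is guaranteed by Hersch's classical centre-of-mass argument applied to the measure $\exp(2\conformalf_0)\,d\sphmug$, one obtains a new parametrization $\varphi:=\varphi_0\circ\Phi$ whose conformal factor $\conformalf$ satisfies the normalization $\int_{\sphere}x^\oi\exp(2\conformalf)\,d\sphmug=0$ for $\oi\in\{1,2,3\}$, where the $x^\oi$ are the restrictions to $\sphere\subset\R^3$ of the Euclidean coordinate functions. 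Pulling the Gauss equation back via $\varphi$ and expanding $\exp(2\conformalf)=1+2\conformalf+\Oof(\conformalf^2)$ yields
\[
 ({-}\sphlaplace-2)\,\conformalf
 = {-}\tfrac{\sinh(\Hradius)^2}2|\zFundtrf|_{\g*}^2\circ\varphi
 + F(\conformalf)
 + \Oof(\exp((2-\decay)\Hradius)),
\]
where $F$ collects the quadratic and higher order terms in $\conformalf$.

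The principal analytic obstacle is that $-\sphlaplace-2$ on $(\sphere,\sphg*)$ has a three-dimensional kernel spanned by $\{x^1,x^2,x^3\}$; this is exactly why the Möbius normalization is required, as it forces the projection of the right-hand side onto this kernel to be small (quadratic in $\conformalf$ plus higher-order curvature errors) and thus absorbable by a contraction argument for $\Hradius$ sufficiently large. Standard $\Lp^{p^*}$-elliptic theory for $-\sphlaplace-2$ on the orthogonal complement of its kernel then gives
\[
 \|\conformalf\|_{\Wkp^{2,p^*}(\sphere,\sphg*)}
 \le C\,\bigl(\sinh(\Hradius)^2\,\|\zFundtrf\|_{\Lp^{2p^*}(\M)}^2 + \exp((2-\decay)\Hradius)\bigr),
\]
which combined with Lemma~\ref{Bootstrap_for_trace_free_second_fundamental_form__Lp4} (first applied with $p^*=2$, then bootstrapped via the Sobolev embedding $\Hk^1(\sphere)\hookrightarrow\Lp^q(\sphere)$ for all $q<\infty$) gives the claimed estimate on $\conformalf$.

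For the companion $\Wkp^{1,p^*}$-estimate on $\zFundtrf$, the Codazzi equation, combined with the constancy of $\H$, identifies $\div\zFundtrf$ with the tangential trace of $\outric(\nu,\cdot)$, which by asymptotic hyperbolicity is bounded in $\Lp^{p^*}(\M)$ by $C\exp((\tfrac2{p^*}-\decay)\Hradius)$. Since $\sphere$ carries no nonzero holomorphic quadratic differentials, the first-order operator $T\mapsto\div T$ on trace-free symmetric $2$-tensors over $(\sphere,\sphg*)$ is elliptic with trivial kernel. Pulling back via $\varphi$ and applying $\Lp^{p^*}$-elliptic theory therefore yields $\|\zFundtrf\|_{\Wkp^{1,p^*}(\M)}\le C(\|\zFundtrf\|_{\Lp^{p^*}(\M)}+\|\div\zFundtrf\|_{\Lp^{p^*}(\M)})$, completing the argument. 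The main delicacy throughout is ensuring that all elliptic constants are uniform in $\Hradius$; this is handled by performing the elliptic analysis on the fixed manifold $(\sphere,\sphg*)$ and explicitly tracking the $\sinh(\Hradius)^2$ scaling of norms induced by the conformal parametrization.
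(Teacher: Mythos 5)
Your overall strategy---feed the $\Lp^4$- and $\Hk^1$-bounds from Lemma~\ref{Bootstrap_for_trace_free_second_fundamental_form__Lp4} into a conformal parametrization on the round sphere and bootstrap via elliptic theory---coincides with the paper's, but the two component steps are carried out differently. For the conformal factor, the paper simply invokes \cite[Thm~A.1]{nerz2015GeometricCharac} applied to the rescaled surface at $p'=2$; your proposal inlines the proof of that cited theorem (uniformization, Hersch's M\"obius renormalization to orthogonalize against the kernel of $-\sphlaplace-2$, and a contraction argument for the Liouville-type equation). For the $\zFundtrf$-estimate, the paper uses the second-order Simon's identity for $\laplace\zFund$, first to get an $\Lp^\infty$-bound and then---after positivity of the Gauss curvature yields Laplace-regularity constants uniform in $\Hradius$---the $\Wkp^{1,p^*}$-estimate. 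You instead use the first-order traced Codazzi equation $\div\zFundtrf=\outric(\nu,\cdot)|_{T\M}$ (valid since $\H$ is constant), the conformal covariance in dimension two of the divergence acting on trace-free symmetric $(0,2)$-tensors, and the absence of holomorphic quadratic differentials on $\sphere$. This is a genuinely different and in fact cleaner route to the $\Wkp^{1,p^*}$-bound: it replaces Moser iteration plus Laplace regularity by a single overdetermined first-order elliptic estimate with trivial kernel. Both reach the same conclusion.

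That said, your written scaling is off by a volume factor in both key estimates, and this is not entirely cosmetic. In the Liouville estimate, $\Vert\zFundtrf\Vert_{\Lp^{2p^*}(\M)}^2$ converts to a norm on the fixed unit sphere $(\sphere,\sphg*)$ only after multiplying by $\sinh(\Hradius)^{-2/p^*}$; the correct prefactor is therefore $\sinh(\Hradius)^{2-2/p^*}$, not $\sinh(\Hradius)^2$. As written, for $p^*=2$ and $\decay\in\interval2*3$ the right-hand side is of order $\exp((3-\decay)\Hradius)$, which does \emph{not} tend to zero as $\Hradius\to\infty$, so the smallness required to justify the linearization of the Liouville equation and to absorb the quadratic terms $F(\conformalf)$ would not hold; with the corrected factor it is $\exp((2-\decay)\Hradius)$, as needed. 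Likewise, the Codazzi elliptic estimate should carry a factor $\volume{\M}^{1/2}\approx\sinh(\Hradius)$ in front of $\Vert\div\zFundtrf\Vert_{\Lp^{p^*}(\M)}$ to be compatible with the paper's volume-weighted Sobolev norms (and, since the kernel is trivial on $\sphere$, the lower-order term $\Vert\zFundtrf\Vert_{\Lp^{p^*}(\M)}$ on the right can be dropped). These are bookkeeping slips rather than conceptual gaps---once corrected, your exponents match the proposition's---but as written the argument does not close.
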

\begin{proof}
By Lemma~\ref{Bootstrap_for_trace_free_second_fundamental_form__Lp4}, we know $\Vert\sc[r]-2\Vert_{\Lp^2(\M[r])}\le C\,\exp((4-2\,\decay)\,\Hradius)\ll1$ if $\Hradius$ is sufficiently large, where $(\M[r],\g[r]):=\sinh(\Hradius)^{{-}1}\,(\M,\g*)$ and where we used $\decay>2$. Thus, we can use \cite[Thm~A.1]{nerz2015GeometricCharac} for $p'=2$ to conclude that there is a conformal parametrization $\psi$ of $\M$ such that the corresponding conformal factor $\conformalf\in\Hk^2(\M)$ satisfies
$\Vert\conformalf\Vert_{\Hk^2(\M)} \le C\,\exp(\frac{2-\decay}2\,\Hradius)$.
In particular, the Sobolev inequality holds on $\M$, where the Sobolev constant does not depend on $\Hradius$. Thus, $\H\equiv\text{const}$ and the Simon's identity, \ie
\begin{equation*}\labeleq{Simons-identity}
 \laplace\zFund*
	= \div_2\outrc_{\cdot\cdot\cdot\nu} - \levi*\outric_{\nu} + \frac{\H^2}2\zFundtrf + \H\,\trzd\zFundtrf\zFundtrf - \trtr\zFundtrf\zFundtrf\,\zFund* - \trzd{(\tr_{23}\outrc)}\zFundtrf + \outrc_{\cdot I\!J\cdot}\,\zFundtrf^{I\!J},
\end{equation*}
imply $\vert\zFundtrf\vert\le C\,\exp((1-\decay)\,\Hradius)$, \cite{simons1968minimal,schoen1975curvature}. In particular, the Gau\ss\ curvature is positive. Thus, the regularity of the Laplace operator holds on $\M$, where the corresponding constants does not depend on $\Hradius$, see \cite[Cor.~2.3.1.2]{christodoulou1993global}. In particular, \eqref{Simons-identity} proves the second inequality in~\eqref{Regularity_of_the_spheres__k}. Repeating the above argument, we also get the first inequality in~\eqref{Regularity_of_the_spheres__k} for $\Wkp^{2,p^*}$ instead of $\Hk^2$.\pagebreak[3] 
\end{proof}

\begin{remark}[Comparing with Neves-Tian's approach]
Neves-Tian proved an analogous result in their settings. In their later paper \cite{NevesTianExistenceCMC_II} they apply De~Lellis-M\"uller's beautiful theorem that surfaces in the Euclidean space with small tracefree part of the second fundamental form can be written as perturbed coordinate spheres. This was previously also used by Metzger (and later by the author) in the Euclidean setting, \cite{metzger2007foliations,nerz2015CMCfoliation}. To use this theorem in the hyperbolic setting, Neves-Tian applied the conform invariants of $\Vert\zFundtrf\Vert_{\Lp^2(\M)}$ by switching to the Poincar\'e ball model of the hyperbolic space, in which the hyperbolic metric is conformally equivalent to the Euclidean one. Thus, they concluded that $\M$ is close to an \emph{Euclidean} sphere (in this model). To conclude that $\M$ is close to a \emph{hyperbolic} sphere, they used their assumptions on the inner and outer radius on $\M$, \ie that $\max_{\M}\rad\le\min_{\M}\rad+C$. However, their approach heavily relies on their strong decay assumptions (at least $\decay>3$) on $\outg$ which had to satisfy \eqref{Ass_NevesTian_II} and on the strict assumption on the coordinate radii (see above). In particular, it cannot be applied when the decay rate of the error terms is $\exp({-}3\rad)$ or slower.

In their earlier paper \cite{NevesTianExistenceCMC_I}, Neves-Tian used a different approach to show that the CMC-surfaces are graphs over the coordinate sphere and to control their graph function: after deducing some basic inequalities (as we did so far), they proved that the distance $\rad$ from the origin satisfies (in highest order) the partial differential equation $\laplace\rad \approx 1 - \exp(2\rad)$, \ie that $\rad$ is closed to a solution $f\approx\rad-\Hradius$ of
\begin{equation*}\labeleq{const-sc-eq} \sphlaplace f = 1 - \exp(2f) \end{equation*}
and recalled that these functions correspond to metrics $\g:=\exp(2f)\sphg*$ on the Euclidean sphere with constant Gau\ss\ curvature. It is well-known that the set of these metrics is not compact and therefore $\rad$ cannot be controlled by purely analyzing this equation. Instead they proved the control using the assumed asymptotic of the surrounding metric, \ie that $\outg$ is of the form \eqref{Ass_NevesTian_I}. In principle, this is also the approach which we use in the proof of Theorem~\ref{W2pSurfaceRegularity} for the graph function instead of $\rad$. However, we cannot adapt Neves-Tian's proofs as they again depend on their restrictive assumption on the behavior of $\outg*$ near infinity (implying restrictive inequalities on $\outric$ and $\zFundtrf$).\pagebreak[3]
\end{remark}

We will later see that the estimates proven so far are not sufficient for what we need in the later parts of the article. Let us therefore take a step back and compare what we have proven so far with the expectation what should be provable.
\begin{remark}[Expectation of the decay rate of \texorpdfstring{$\zFundtrf$}{the second fundamental form}]\label{Expectation_on_k}
Our approach in Lemma~\ref{Bootstrap_for_trace_free_second_fundamental_form__Lp4} (which is very similar to Neves-Tian's approach) assumes that the error ($\levi*\zFundtrf$) in the first derivative of the second fundamental form decays \emph{one order faster} than the error ($\zFundtrf*$) of the second fundamental form itself. Namely, we have proven
\[ \sinh(\Hradius)^{-1}\Vert\zFundtrf\Vert_{\Wkp^{1,p}(\M)}=\sinh(\Hradius)^{-1}\,\Vert\zFundtrf\Vert_{\Lp^p(\M)}+\Vert\levi*\zFundtrf\Vert_{\Lp^p(\M)} \le C\,\exp((\frac2p-\decay)\Hradius). \nopagebreak\]\nopagebreak
Note the factor $\sinh(\Hradius)^{-1}$ before $\Vert\zFundtrf\Vert_{\Lp^p(\M)}$ which does not appear before $\Vert\levi\zFundtrf\Vert_{\Lp^p(\M)}$. 

This approach led to the optimal decay in the Euclidean setting, however it is \emph{not} optimal in the hyperbolic setting, as it does not consider that the tangential and the \lq radial\rq\ vector fields behave in the hyperbolic setting differently under scaling. To explain the consequences of this, we consider for a moment the coordinate spheres $\hsphere_\Hradius(0)$ in a $\Ck^2_{\decay}$-asymp\-to\-tic\-ally hyperbolic manifold $(\outM,\outg*)$. Here, the metric satisfies
\begin{equation*}
 \Vert\g-\sinh(\rad)^2\sphg\Vert_{\Lp^\infty(\hsphere_\Hradius(0))} \le \Vert\outg-\houtg\Vert_{\Lp^\infty(\hsphere_\Hradius(0))} \le C\,\exp({-}\decay\,\Hradius)
\end{equation*}
and second fundamental form satisfies
\begin{align*}
 \Vert\zFundtrf<\Hradius>\Vert_{\Lp^\infty(\hsphere_\Hradius(0))} \le{}& \Vert\lieD{\nu}(\outg-\houtg)\Vert_{\Lp^\infty(\hsphere_\Hradius(0))} \le C\,\exp({-}\decay\,\Hradius), \\
 \Vert\levi<\Hradius>*\zFundtrf<\Hradius>\Vert_{\Lp^\infty(\hsphere_\Hradius(0))} \le{}& \Vert\outlevi*\lieD{\nu}(\outg-\houtg)\Vert_{\Lp^\infty(\hsphere_\Hradius(0))} + \Vert\lieD{\nu}(\outg-\houtg)\Vert_{\Lp^\infty(\hsphere_\Hradius(0))}^2 \le C\,\exp({-}\decay\,\Hradius).
\end{align*}
In particular, the error of the second fundamental form has the \emph{same} decay rate as the error of its derivative and is therefore \emph{not} of lower decay rate, as it was implicitly assumed in Lemma~\ref{Bootstrap_for_trace_free_second_fundamental_form__Lp4} and in the articles by Neves-Tian. In other words, we should be able to prove an inequality of the form
\[ \Vert\g-\sinh(\rad)^2\sphg\Vert_{\Lp^\infty(\M)} + \Vert\zFundtrf\Vert_{\Lp^\infty(\M)}+\Vert\levi\zFundtrf\Vert_{\Lp^\infty(\M)} \le C\,\exp({-}\decay\Hradius), \]
because this is true for the coordinate spheres, and the \emph{geometric spheres} (the CMC-surfaces) should (at least) be as round as the coordinate ones. Furthermore, we see that we should not get stronger inequalities (on $\zFundtrf$ etc.) than the ones stated above, because they are already as strong as the ones of the surrounding space, \eg if $\zFundtrf$ would decay faster, then $\lieD{\nu}(\outg-\houtg)$ would also decay faster which would lead to a stronger decay of $\outg-\houtg$ than the one we assumed and we cannot hope for such a \lq decay boostrap\rq\ for $\outg-\houtg$.\smallskip

As explained above, we cannot use Sobolev, Poincar\'e, or similar inequalities on $\zFundtrf$ to conclude such a result, because these are only optimal when the derivative ($\levi*\zFundtrf$) decays one order faster than the tensor ($\zFundtrf$) itself, see Remark~\ref{Scaling_Of_Sobolev}.
Thus, instead of adding one additional derivative to $\zFundtrf$---and looking at $\levi*\zFundtrf*$---\nolinebreak, we \emph{reduce} the derivatives by two and look at $\vert\outx-p_1\vert$ being the distance function to $\hsphere_\Hradius(p_1)$ for some $p_1\in\R^3$. More precisely, we look at the graph function $\graphf\in\Wkp^{2,p}(\hsphere_\Hradius(p))$---after proving that it exists. Then, we establish optimal estimates for $\graphf$ implying the estimates on $\zFundtrf$ \etc explained above.
\end{remark}

\begin{proof}[Proof of Theorem~\ref{Regularity_Theorem}]$\,\!$%
\nopagebreak\par\nopagebreak
\emph{Step 1 {\normalfont(}$\M$ is regular sphere{\normalfont)}:}
First, we have to prove that the mean curvature radius $\Hradius$ is bounded from above by the coordinate radius $\Rradius$ or more precisely that $\int\exp(2\Hradius-2\rad)\d\mug\le C$. Here, we use an argument by Neves-Tian: A direct calculation presented in \cite[Prop.~3.4]{NevesTianExistenceCMC_I} shows
\begin{align*}
	\Big|&\laplace\rad - \Big( \underbrace{(4-2\vert X\vert^2)}_{\ge2}\exp({-}2\rad) + (\H+2) + \underbrace{\vphantom{\Big|}(\H+2)(\outg*(\houtlevi\rad,\nu)-1)}_{\ge0} + \underbrace{\vphantom{\Big|}\vert X\vert^2}_{\ge0} \Big)\Big| \\
	&\qquad\le C\,\exp({-}\decay\rad),
\end{align*}
where $X$ denotes the orthogonal projection of $\partial*_r$ to the tangent space of $\M$.
In particular, we have
\[ \laplace\rad\ge \exp({-}2\rad)(2-C\exp((2-\decay)\rad)) - \sinh(\Hradius)^{{-}2}. \]
By integrating this inequality, we get
\begin{equation*}\labeleq{Hradius-meanvalue-rad}
 \sinh(\Hradius)^{{-}2} \ge	(2-C\exp((2-\decay)\rradius))\fint\exp({-}2\rad)\d\mug \ge \fint\exp({-}2\rad)\d\mug
\end{equation*}
being the inequality which we wanted. Note that this implies that $\Hradius$ is large if $\rradius$ is large and that $\fint\exp(2\Hradius-\decay\rad)$ is arbitrary small if $\rradius$ is sufficiently large.

Now, let us assume that $\c^{-1}\exp(2\Hradius)\le\volume{\M}\le\c\exp(2\Hradius)$ is not a~priori true. In particular, we have $\eta>2$ and by the above $\int\exp(2(\Hradius-\rad))\d\mug\le C\sinh(\Aradius)^2$. We start as in \cite[Lemma~4.1]{NevesTianExistenceCMC_I} and use the test functions $\varphi_\oi:=\outx^i\circ\psi^{-1}$, where $\psi:\sphere\to\M$ is a conformal parametrization of $\M$ with $\int\varphi_i\d\mug=0$. These were already used by Huisken-Yau in \cite[Prop.~5.3]{huisken_yau_foliation} and were based on an idea by Christodoulou-Yau, \cite{christodoulou71some}. By the controlled instability assumption, this implies
\begin{equation*}
 {-}\frac{8\pi}3
 =	\int_{\sphere}\outx_\oi\,\sphlaplace\outx_\oi\d\sphmug
 =	\int_{\M}\varphi_\oi\,\laplace\varphi_\oi\d\mug 
 \le \int(\frac{4-\eta}{\sinh(\Hradius)^2}-\trtr\zFund\zFund-\outric(\nu,\nu))\varphi_\oi^2\d\mug
\end{equation*}
for every $\oi\in\{1,2,3\}$, where we have used the conformal invariance of $\laplace f\,\d\mug$. Now, we recall that $(\sum_i\varphi_i^2)\circ\psi=\sum_i\outx_i^2\equiv1$ to get
\begin{align*}
 8\pi
 \ge{}& \int\trtr\zFundtrf\zFundtrf + \frac{\H^2-4}2
					+ 2+\outric(\nu,\nu)
					- \frac{4-\eta}{\sinh(\Hradius)^2} \d\mug \\
 \ge{}& \Vert\zFundtrf\Vert_{\Lp^2(\M)}^2
				+ \int\frac{\eta-2}{\sinh(\Hradius)^2}
				- C\int \exp({-}\decay\rad) \d\mug. \labeleq{Triv_on_L2_Zfundtrf}
\end{align*}
In particular, \eqref{Hradius-meanvalue-rad} gives
\[ (\eta-2)\sinh(\Aradius)^2 \le 8\pi\sinh(\Hradius)^2 + C\,\exp((2-\decay)\rradius)\,\sinh(\Aradius)^2 \]
implying $\Aradius\le\Hradius+C$, \ie $\volume{\M}\le C\exp(2\Hradius)$. On the other hand, the Gau\ss-Bonnet theorem and the Gau\ss\ equation combined with equations \eqref{Hradius-meanvalue-rad} and \eqref{Triv_on_L2_Zfundtrf} give
\begin{align*}
 8\pi
  ={}& \int\sc\d\mug
	= \int(\outsc-2\outric(\nu,\nu)-\trtr\zFundtrf\zFundtrf+\frac{\H^2}2\d\mug) \\
	\le{}& C\,\int \exp({-}\decay\rad) \d\mug
					+ \int\frac2{\sinh(\Hradius)^2}\d\mug
	\le (8\pi+C\,\exp((2-\decay)\rradius))\,\frac{\sinh(\Aradius)^2}{\sinh(\Hradius)^2}
\end{align*}
implying $\Aradius\ge\Hradius+C$, \ie $C^{-1}\exp(2\Hradius)\le\volume{\M}$.

Now, we look again on the general setting. By the above, we have $C^{-1}\,\exp(2\Hradius) \le \volume{\M} \le C\,\exp(2\Hradius)$ and by \eqref{Hradius-meanvalue-rad} we furthermore know $\Hradius\le\Rradius+C\le(1+\zeta)\rradius+C$. In particular, we see that $\M\in\regsphere[\decay']{C}$ for some constant $C=\Cof[\decay][\oc][\zeta][\eta][c]$, $\decay'=\frac{\decay}{1+\zeta}$, and every $p\in\interval*1*\infty$.\smallskip

\emph{Step 2 {\normalfont(}applying everything proven so far and Theorem~\ref{W2pSurfaceRegularity}{\normalfont)}}
By the above, we can apply Lemma~\ref{Bootstrap_for_trace_free_second_fundamental_form__Lp4} and Proposition~\ref{Intrinsic_Regularity_of_the_spheres} for $\decay'$ instead of $\decay$. In particular, the Sobolev inequality holds and therefore the Simon's identity and Lemma~\ref{Bootstrap_for_trace_free_second_fundamental_form__Lp4} imply
\[ \Vert\zFundtrf\Vert_{\Lp^\infty(\M)} \le C\,\exp((1-\decay')\Hradius), \]
see for example \cite[Prop.~D.2]{nerz2015CMCfoliation}. Due to the assumptions on $\outg$, this implies for sufficiently large $\Hradius$ that
\begin{equation*} \Vert\hzFund+\hg\Vert_{\Lp^\infty(\M)} \le C\,\exp((1-\decay')\Hradius), \qquad
 \Vert\hH+2\,\frac{\sinh(\Hradius)}{\cosh(\Hradius)}\Vert_{\Lp^\infty(\M)} \le C\,\exp({-}\decay'\Hradius).
\end{equation*}
Therefore, Theorem~\ref{W2pSurfaceRegularity} implies the existence of an isometry $\hiso$ of the hyperbolic space and of a function $\graphf\in\Wkp^{2,p}(\sphere_\Hradius(0))$ as in the claim of the theorem. Using the regularity of the (weak) Laplace operator on the Simon's identity \eqref{Simons-identity}, we get the claimed estimate on $\levi*\zFundtrf$, too. This proves the claim.
\end{proof}

\begin{remark}[Not controlling the center]\label{Remark_NotControllingTheCenter}
In \cite[Sect.~7]{NevesTianExistenceCMC_I}, Neves-Tian proved a direct control on the hyperbolic center of the CMC surfaces\footnote{although the notation of the hyperbolic center was only later introduced by Cederbaum-Cortier-Sakovich, \cite{cederbaum2015center}}. To do so, they brilliantly use the Kazdan-Warner identity to prove that this center is (in highest order) identical to the center of mass which they assumed to vanish (by using balanced coordinates). Here, they had to explicitly use that the Ricci curvature $\outric$ is (in highest order) characterized by the mass aspect function $\sphtr\,\massaspect$. In particular, they had to exploit the fact that the error term in the metric decays faster than the first non-constant term of the Ricci curvature. 
In our setting, we have that the first non-constant term in the Ricci curvature can be of order $\exp({-}\decay\rad)$ being the same order as the one of the error term $\houtg-\outg$. Therefore, we cannot use a method similar to the one by Neves-Tian. Instead, we apply an a~posteriori ansatz to control the center of the CMC-surfaces, see Section~\ref{proofmaintheorem}.
\end{remark}

\section{Stability of the surfaces}\label{Section-Stability}
In this section, we prove the stability of closed CMC-hyper\-surfaces in three-dimen\-sional, asymptotically hyperbolic Riemannian manifolds---at least for those within our class of surfaces satisfying~\eqref{class-assumption}. Although, there are \emph{conceptually} large differences to the Euclidean setting (see Remark~\ref{LinearizedBoosts}), \emph{formally} the argument is more or less identical with the one used by the author in the asymptotically Euclidean setting, see \cite{nerz2015CMCfoliation,nerz2015GeometricCharac}. We recall the main steps here nevertheless for the readers convenience and to point out the conceptional differences.\smallskip

First, we do a spectral analysis of the stability operator in order to prove that any regular CMC-sphere is stable: We see that the eigenvalues of the stability operator $\jacobiext*$ of a $\Ck^2_{\decay}(\c)_\Hradius$-round sphere $\M$ are of order $\sinh(\Hradius)^{-2}$ except for three eigenvalues of smaller order, where the stability operator of $\M$ is the linearization of the \emph{mean curvature map}. It is characterized by
\[ \jacobiext*f = \laplace f + (\outric*(\nu,\nu) + \trtr\zFund\zFund)\,f \qquad\forall\,f\in\Hk^2(\M), \]
for more details see Proposition~\ref{Stability} or (in a more general context) \cite{barbosa2012stability}. As we prove in Proposition~\ref{Stability}, the corresponding partition of $\Hk^2(\M)$ (respectively $\Lp^2(\M)$) is (asymptotically) given as follows.
\begin{definition}[Canonical partition of \texorpdfstring{$\Lp^2$}{L-2}]\label{Canonical_partition}
Let $\M$ be a $\Ck^2_{\decay}(\c)_\Hradius$-round sphere in $\outM$. Let $\boost g$ be the $\Lp^2(\M)$-orthogonal projection of a function $g\in\Lp^2(\M)$ on the linear span of eigenfunctions of the (negative) Laplacian with eigenvalue $\lambda$ satisfying $\vert\lambda-2\sinh(\Hradius)^{{-2}}\vert\le \frac32\,\sinh(\Hradius)^{-2}$, \ie
\[ \boost g := \sum\lbrace \eflap_i\,\int_{\M} g\,\eflap_i \d\mug \ \middle|\ \frac12 \le \sinh(\Hradius)^2\,\ewlap_i \le \frac72\rbrace\qquad\forall\,g\in\Lp^2(\M), \]
where $\{f_i\}_{i=0}^\infty$ denotes a complete orthonormal system of $\Lp^2(\M)$ by eigenfunctions $\eflap_i$ of the (negative) Laplace operator with corresponding eigenvalue $\ewlap_i$ satisfying $0\le\ewlap_i\le\ewlap_{i+1}$. Finally, $\deform g:=g-\boost g$ denotes the rest of such a function $g\in\Lp^2(\M)$. Elements of $\boostLp^2(\M):=\lbrace\boost f:f\in\Lp^2(\M)\rbrace$ are called \emph{linearized boosts} and those of $\deformLp^2(\M):=\lbrace\deform f:f\in\Lp^2(\M)\rbrace$ are called \emph{deformations}.
\end{definition}
Note that we here included rescalings $(g\equiv \mean g:=\fint g\d\mug$) in the deformations.

\begin{remark}[Linearized boosts]\label{LinearizedBoosts}
It is important to note that the space $\boostLp^2(\M)$ does \emph{not} correspond to translations as it does in the asymptotically Euclidean setting, \cite[Def.~2.6]{nerz2015CMCfoliation}. Instead it corresponds to the \emph{linearization of boosts} of the surrounding space.\footnote{This is why we use the notation $\boost{(\,{\cdot}\,)}$ instead of ${(\,{\cdot}\,)}^{\text t}$ which we used in the Euclidean setting.}

To see this, we recall that the isometries of the hyperbolic space (as hyperboloid in the Minkowski spacetime) are given by the boosts and the rotations. If $\hiso:\interval-\ve\ve\times\R^3\to\R^3$ is a family of hyperbolic isometries, \ie $\hiso(t,\,{\cdot}\,):\R^3\to\R^3$ is an isometry of $\houtg$ for every $t\in\interval-\ve\ve$, and $P(0,\,{\cdot}\,)=\id_{\R^3}$, then $X:=(\partial*_t\hiso)(0)$ denotes its linearization and is a killing vector field. This killing vector field consists of two parts, one rotation vector field (tangential to every sphere $\hsphere_\rradius(0)$ around the origin) and a boost (orthogonal to every sphere $\hsphere_\rradius(0)$ around the origin). Therefore, the lapse function $\rnu<r>:=\houtg(X,\hnu)$ on the sphere $\hsphere_r(0)$ \lq measures\rq\ the \lq boost part\rq\ of the linearization $X$ of $\hiso$. By a direct calculation, we see that it satisfies $\rnu<r>=\boost{\rnu<r>}$. However as boosts are non-linear (in contrast to translations in the Euclidean setting which are linear), $\rnu<r>$ is not independent of~$r$.

Now, we connect the linearized boosts with the stability operator: In the notation used above, $\hiso(t,\hsphere_\rradius(0))$ has constant mean curvature which is independent of~$t$ as $\hiso(t,\,{\cdot}\,)$ is a isometry. In particular, the lapse function must lay within the kernel of the linearization of the mean curvature---being the stability operator. By a dimension argument, we see that $\boostLp^2(\M)$ is the kernel of the stability operator. This explains why these functions are the problematic ones when we study stability of CMC-surfaces in asymptotically hyperbolic manifolds. Furthermore, we see that we have to use the mass as it is what breaks the boost invariance of our metric.
\end{remark}

Now, we can prove the announced stability proposition which is one of the central tools for the proofs of the main theorems. Note that we need here the assumption on the scalar curvature for the first time. A formally identical argument was used in the asymptotically flat setting by the author in \cite{nerz2015CMCfoliation,nerz2015GeometricCharac} and a familiar argument (implicitly) by Huang \cite{Huang__Foliations_by_Stable_Spheres_with_Constant_Mean_Curvature}. We do not give here the proof as it is (formally) identical to the one of \cite[Prop.~2.7]{nerz2015CMCfoliation} if we replace \cite[Prop.~2.4]{nerz2015CMCfoliation}, the Euclidean mean curvature radius $\Hradius$, and the translational part $(\,{\cdot}\,)^t$ in \cite{nerz2015CMCfoliation} by Theorem~\ref{Regularity_Theorem}, the (Euclidean) area radius $\eukAradius:=\sqrt{(4\pi)^{-1}\,\volume{\M}}\approx\sinh(\Hradius)$, and the linearized boost $\boost{(\,{\cdot}\,)}$, respectively.
\begin{proposition}[Stability]\label{Stability}
Let $\decay=:\frac52+\outve\in\interval{\frac52}*3$, $\scdecay\ge3+\outve$, $p\in\interval*2*\infty$, $\c\ge0$, and $\eta\in\interval0*4$ be constants and let $(\M,\g*)\in\strregsphere{\c}$ be a strictly regular sphere with radius $\Hradius$ in a three-di\-men\-sio\-nal Riemannian manifold $(\outM,\outg*)$. If $\M$ has ${-}(4-\eta)\,\sinh(\sigma)^{{-}2}$-controlled instability, then there are constants $\Hradius_0=\Cof{\Hradius_0}[\outve][\c][|\HmHaw|]$ and $C=\Cof[\outve][\c][|\HmHaw|]$ such that
\begin{align*}
 \vert \int_{\M} (\jacobiext*\boost g)\,\boost h\d\mug - \frac{6\,\HmHaw}{\sinh(\Hradius)^3}\,\int_{\M}\boost g\,\boost h\d\mug \vert \le{}& C\,\exp({-}(3+\outve)\Hradius)\,\Vert\boost g\Vert_{\Lp^2(\M)}\,\Vert\boost h\Vert_{\Lp^2(\M)}, \labeleq{Stability__t} \\
 \Vert \deform g\Vert_{\Lp^2(\M)} \le{}& \sinh(\Hradius)^2 \Vert\hspace{.05em}\jacobiext*\deform g\Vert_{\Lp^2(\M)} \labeleq{Stability__d}
\end{align*}
for every $g,h\in\Hk^2(\M)$ if $\Hradius>\Hradius_0$, where $\HmHaw=\HmHaw(\M)$ denotes the hyperbolic Hawking mass of $\M$. Furthermore, the corresponding $\Wkp^{k,p}$-inequalities
\begin{align*}
 \Vert\boost g\Vert_{\Wkp^{3,p}(\M)}	\le{}& (\frac{\sinh(\Hradius)^3}{6\,\vert\HmHaw\vert}+C\,\exp((3-\outve)\Hradius))\,\Vert\jacobiext*g\Vert_{\Lp^p(\M)}, \\
 \Vert\deform g\Vert_{\Wkp^{2,p}(\M)} \le{}& C\,\exp(2\Hradius)\,\Vert\jacobiext*g\Vert_{\Lp^p(\M)}, \\
 \Vert\Hesstrf\,g\Vert_{\Lp^p(\M)}		\le{}& C\,\exp((\frac12-\outve)\Hradius)\,\Vert\jacobiext*g\Vert_{\Lp^p(\M)}
\end{align*}
hold for every function $g\in\Wkp^{2,p}(\M)$ and $p\in\interval*2\infty$ if $\Hradius>\Hradius_0$.

If in the above setting $\efjac\in\Hk^2(\M)$ is an eigenfunction of ${-}\jacobiext*$ with corresponding eigenvalue $\ewjac\in\interval-{\frac32\sinh(\Hradius)^{{-}2}}{\frac32\sinh(\Hradius)^{{-}2}}$, then
\begin{equation*}
	\Vert\,\deform{\efjac}\Vert_{\Hk^2(\M)} \le C\,\exp({-}(\frac12+\outve)\Hradius)\Vert\,\efjac\Vert_{\Hk^2(\M)},\qquad \vert\,\ewjac-\frac{6\,\HmHaw}{\sinh(\Hradius)^3}\vert \le C\,\exp({-}(3+\outve)\,\Hradius). \labeleq{Stability__ewjac} \end{equation*}
\end{proposition}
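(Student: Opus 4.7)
The plan is to follow \cite[Prop.~2.7]{nerz2015CMCfoliation} line-for-line with three substitutions dictated by the asymptotically hyperbolic setting: Proposition~\ref{Intrinsic_Regularity_of_the_spheres} will replace its Euclidean analogue as source of a parametrization of $\M$ by a nearly round sphere, the Euclidean mean curvature radius $\Hradius$ will be replaced by the hyperbolic area radius $\sinh(\Hradius)$, and the translational lapse space will be replaced by the boost-lapse space $\boostLp^2(\M)$. The starting input is the conformal parametrization $\varphi:\sphere\to\M$ with conformal factor $\conformalf\in\Wkp^{2,p}(\sphere)$ and $\Vert\conformalf\Vert_{\Wkp^{2,p}(\sphere)}\le C\exp((2-\decay)\Hradius)$ from Proposition~\ref{Intrinsic_Regularity_of_the_spheres}; via elliptic perturbation this transfers the spectrum $k(k+1)\sinh(\Hradius)^{-2}$ of $-\sphlaplace$ on the round $\sinh(\Hradius)^2\sphg*$-sphere onto $-\laplace$ on $(\M,\g*)$ with errors $\oof(\sinh(\Hradius)^{-2})$. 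Consequently exactly the three eigenvalues selected in Definition~\ref{Canonical_partition} correspond to $k=1$ spherical harmonics, and their eigenfunctions coincide in highest order with the restrictions to $\M$ of the three boost Killing fields $X^{(\oi)}$, $\oi\in\{1,2,3\}$, as foretold in Remark~\ref{LinearizedBoosts}.

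For the deformation estimate~\eqref{Stability__d}, I would write $\jacobiext* = \laplace + V$ with
\[ V = \trtr\zFund\zFund + \outric(\nu,\nu) = 2\sinh(\Hradius)^{-2} + \trtr\zFundtrf\zFundtrf + (\outric(\nu,\nu)+2), \]
using $\H\equiv{-}2\,\cosh(\Hradius)/\sinh(\Hradius)$ and $\tfrac12\H^2=2+2\sinh(\Hradius)^{-2}$. The last two summands are $\oof(\sinh(\Hradius)^{-2})$ in $\Lp^\infty$ by Proposition~\ref{Intrinsic_Regularity_of_the_spheres} (via Simons' identity) and the hypothesis on $\outric*+2\outg*$. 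On $\deformLp^2(\M)$ the spectrum of $-\laplace$ is $\{0\}\cup[6\sinh(\Hradius)^{-2}-\oof(\sinh(\Hradius)^{-2}),\infty)$, while the controlled-instability hypothesis forbids eigenvalues of $\jacobiext*$ below ${-}(4-\eta)\sinh(\Hradius)^{-2}$; together these will force the spectrum of $\jacobiext*$ on $\deformLp^2(\M)$ to stay bounded away from zero by a definite multiple of $\sinh(\Hradius)^{-2}$, giving~\eqref{Stability__d}. The $\Wkp^{k,p}$-variants follow from elliptic regularity for the weak Laplacian (whose constants are uniform in $\Hradius$ by the Sobolev inequality of Proposition~\ref{Intrinsic_Regularity_of_the_spheres}) together with the Codazzi identity.

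The boost estimate~\eqref{Stability__t} is the heart of the matter. Because $\boost g,\boost h$ lie in the span of approximate $k=1$-eigenfunctions of $-\laplace$,
\[ \int_{\M}(\jacobiext*\boost g)\,\boost h\d\mug = \int_{\M}\bigl(\trtr\zFundtrf\zFundtrf + (\outric(\nu,\nu)+2)\bigr)\,\boost g\,\boost h\d\mug + (\text{error}), \]
and $\trtr\zFundtrf\zFundtrf\le C\exp({-}2(\decay-1)\Hradius)=C\exp({-}(3+2\outve)\Hradius)$ is negligible compared with the targeted $\exp({-}(3+\outve)\Hradius)$. The remaining integral will be identified with the Hawking-mass term via the Ricci-version of the mass~\eqref{Ricci_Version_Mass}: since $\boost g,\boost h$ are $\Lp^2$-close to the restrictions of $\houtg(X^{(\oi)},\hnu)$ to $\M$, the product $\boost g\,\boost h$ splits into a monopole mode (whose coupling to $\int(\outric(\nu,\nu)+2)\d\mug$ is computed from Gau\ss-Bonnet, the Gau\ss\ equation, and the scalar-curvature decay $\scdecay>3$, yielding the coefficient $\HmHaw$) and into degree-$2$ modes (handled by the dipole piece of~\eqref{Ricci_Version_Mass} and the orthogonality of distinct spherical harmonics). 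Together these produce $6\HmHaw\sinh(\Hradius)^{-3}\int_{\M}\boost g\,\boost h\d\mug$ plus the claimed error $\Oof(\exp({-}(3+\outve)\Hradius))\Vert\boost g\Vert_{\Lp^2(\M)}\Vert\boost h\Vert_{\Lp^2(\M)}$.

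The eigenvalue assertion~\eqref{Stability__ewjac} will follow as a corollary: decomposing an eigenfunction $\efjac=\boost\efjac+\deform\efjac$ of $-\jacobiext*$ and applying~\eqref{Stability__d} to $\deform\efjac$ gives $\Vert\deform\efjac\Vert_{\Hk^2(\M)}\le C\exp({-}(\tfrac12+\outve)\Hradius)\Vert\efjac\Vert_{\Hk^2(\M)}$; substituting back and using~\eqref{Stability__t} on $\boost\efjac$ yields $\ewjac=6\HmHaw\sinh(\Hradius)^{-3}+\Oof(\exp({-}(3+\outve)\Hradius))$. The hard part is the boost estimate at the optimal rate $\exp({-}(3+\outve)\Hradius)$: this requires simultaneous use of the scalar-curvature decay $\scdecay>3$ for the monopole piece and of the Ricci version of the mass vector for the dipole piece, and explains why \emph{strict} regularity rather than mere regularity enters the hypothesis.
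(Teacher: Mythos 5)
The paper does not give a written-out proof of Proposition~\ref{Stability}; it states only that the argument is formally identical to the one of \cite[Prop.~2.7]{nerz2015CMCfoliation} once one replaces \cite[Prop.~2.4]{nerz2015CMCfoliation} by \emph{Theorem~\ref{Regularity_Theorem}}, the Euclidean mean-curvature radius by $\eukAradius\approx\sinh(\Hradius)$, and the translational part $(\cdot)^{\text t}$ by $\boost{(\cdot)}$. Your plan agrees with the last two substitutions, but you substitute Proposition~\ref{Intrinsic_Regularity_of_the_spheres} where the paper insists on Theorem~\ref{Regularity_Theorem}. This is not a cosmetic slip: Proposition~\ref{Intrinsic_Regularity_of_the_spheres} gives $\Vert\zFundtrf\Vert_{\Lp^\infty(\M)}\lesssim\exp((1-\decay)\Hradius)$, whereas Theorem~\ref{Regularity_Theorem} (via the appendix graph theorem) gives the stronger $\Vert\zFundtrf\Vert_{\Lp^p(\M)}\lesssim\exp((\tfrac2p-\decay')\Hradius)$ --- an extra full power of $\exp({-}\Hradius)$. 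Remark~\ref{Expectation_on_k} is devoted precisely to explaining why the Neves--Tian--style bound you invoke is one order too weak in the hyperbolic setting, and the paper's whole graph-function machinery exists to remedy this. Running the adaptation with the weaker input cannot be expected to reproduce the sharp error $\exp({-}(3+\outve)\Hradius)$.

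There is also a genuine gap in your treatment of~\eqref{Stability__t}. After splitting $\jacobiext*=\laplace+V$ you write
\[ \int_{\M}(\jacobiext*\boost g)\,\boost h\d\mug = \int_{\M}\bigl(\trtr\zFundtrf\zFundtrf + (\outric(\nu,\nu)+2)\bigr)\,\boost g\,\boost h\d\mug + (\text{error}), \]
and call the discarded piece ``error.'' That piece is exactly $\int\laplace\boost g\,\boost h\d\mug + 2\sinh(\Hradius)^{-2}\int\boost g\,\boost h\d\mug$, and from Definition~\ref{Canonical_partition} alone the near-$k{=}1$ Laplace eigenvalues are only pinned in the window $[\tfrac12\sinh(\Hradius)^{-2},\tfrac72\sinh(\Hradius)^{-2}]$, so this term is a priori of order $\sinh(\Hradius)^{-2}\Vert\boost g\Vert\Vert\boost h\Vert$ --- far too large. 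Making it of order $\exp({-}(3+\outve)\Hradius)$ requires a precise eigenvalue perturbation argument (or, as the paper intends, an argument that never separates $\laplace$ from $V$ but treats $\jacobiext*\boost g$ directly through the approximate Killing structure of the boost vector fields and the resulting near-kernel property). Likewise, your claim that the degree-$2$ part of $\boost g\,\boost h$ is ``handled by the dipole piece of~\eqref{Ricci_Version_Mass}'' does not hold up: the mass vector only captures the degree-$0$ and degree-$1$ content of $\outsymbol G$ against $X^{(\ui)}$, not a degree-$2$ coupling, and a naive $\Lp^\infty\times\Lp^1$ bound on $\int(\outric(\nu,\nu)+2)(\boost g\boost h-\mean{\boost g\boost h})\d\mug$ only gives $\exp({-}\decay\Hradius)\Vert\boost g\Vert\Vert\boost h\Vert=\exp({-}(\tfrac52+\outve)\Hradius)\Vert\boost g\Vert\Vert\boost h\Vert$, which is worse than both the target error and the leading Hawking-mass term itself when $\outve<\tfrac12$. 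You have correctly located the ingredients --- Gau\ss--Bonnet, the Gau\ss\ equation, the scalar-curvature decay, and the Ricci form of the mass --- but the cancellations that bring the error down to $\exp({-}(3+\outve)\Hradius)$ are the substance of the proof and are left unaccounted for here. The deformation estimate~\eqref{Stability__d} and the $\Wkp^{k,p}$ upgrades in your second paragraph, on the other hand, are essentially what the paper intends.
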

\begin{remark}
Note that if only the weaker assumption on the scalar curvature $\outsc$ is valid, see Remark~\ref{AlternativeAssumption_outsc_regsphere}, then \eqref{Stability__ewjac} is not true as stated above. Instead we get
\begin{equation*}
	\Vert\,\deform{\efjac}\Vert_{\Hk^2(\M)} \le C\,\exp({-}(\frac12+\outve)\Hradius)\Vert\,\efjac\Vert_{\Hk^2(\M)},\qquad \ewjac\ge\frac{6\,\HmHaw}{\sinh(\Hradius)^3} - C\,\exp({-}(3+\outve)\,\Hradius) \tag{\ref{Stability__ewjac}'}. \end{equation*}
Note furthermore that the Gau\ss-Bonnet theorem, the Gau\ss\ equation, and Proposition~\ref{Intrinsic_Regularity_of_the_spheres} imply
\[ \vert\HmHaw - \frac{\volume{\M}^{\frac12}}{16\pi^{\frac32}}\int(\outric-(\frac\sc2+1)\outg)(\nu,\nu)\d\mug\vert \le C\,\exp({-}\outve\Hradius) \xrightarrow{\Hradius\to\infty} 0. \]
In particular, if $\M$ and $\hiso$ are as in Theorem~\ref{Regularity_Theorem}, then $\vert\HmHaw-\mass^0\vert\le C\,\exp({-}\outve\Hradius)$ and we can therefore replace $\HmHaw$ by $\mass^0$. We will later see that in our setting $\hiso\circ\outx$ are balanced coordinates and thus $\HmHaw$ can be replaced by the total mass $\mass*\in\{\pm\vert\mass\vert_{\R^{3,1}}\}$.
\end{remark}

\section{Existence and uniqueness of the CMC-foliation}\label{existence_of_the_CMC-foliation}\label{proofmaintheorem}
In this section, we prove the existence of the CMC-foliation as well as the uniqueness and stability of the leaves of the foliation. This is done analogously to \cite[Sect.~3]{nerz2015CMCfoliation} which uses the proof structure of \cite{metzger2007foliations} also used in \cite{NevesTianExistenceCMC_II}. In contrast to these works, the proof of the uniqueness presented here has to deal with an additional obstruction due to the fast that the CMC-surfaces are a priori non-concentric (around the center of mass), see Remark~\ref{Remark_NotControllingTheCenter}. \smallskip\pagebreak[3]

Let us start by stating the existence and uniqueness results.\nopagebreak
\begin{theorem}[Existence of the CMC-foliation]\label{Existence_Theorem_full}
Let $(\outM,\outg*)$ be a three-dimensional $\Ck^2_{\decay,\scdecay}$-asymptotically hyperbolic manifold with timelike mass vector and decay rates $\decay>\frac52$ and $\scdecay>3$. There are a finite radius $\Hradius_0$, a compact set $\outsymbol K\subseteq\outM$, and a diffeomorphism $\outPhi:\interval{\Hradius_0}\infty\times\sphere\to\outM\setminus\outsymbol K$ such that $\M<\Hradius>:=\Phi(\Hradius,\M)$ has constant mean curvature $\H<\Hradius>\equiv{-}2\:\frac{\cosh(\Hradius)}{\sinh(\Hradius)}$ and the sequence of the hyperbolic centers of $\M<\Hradius>$ converge to the center of mass of $(\outM,\outg*)$ as $\Hradius\to\infty$.
\end{theorem}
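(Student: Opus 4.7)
The plan is to follow the continuity method of Metzger and the author's earlier work in the asymptotically Euclidean setting \cite{metzger2007foliations,nerz2015CMCfoliation}, combined with the regularity theorem of Theorem~\ref{Regularity_Theorem} and the spectral analysis of the stability operator in Proposition~\ref{Stability}. By Remark~\ref{remarks_on_mass} I first arrange the chart to be \emph{balanced}, so that the mass vector becomes $\mass=(\mass*,0,0,0)$ with $\mass*\neq0$ (by the timelike assumption) and $\houtcenterz=0$; the general statement then follows by pulling back with the corresponding hyperbolic isometry at the end.

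For each $z\in\R^3$ in a suitable ball and each function $\graphf$ on $\hsphere_\Hradius(z)$, let $\M(\Hradius,z,\graphf)$ denote the hyperbolic normal graph of $\graphf$ over $\hsphere_\Hradius(z)$ and set
\[ F(\Hradius,z,\graphf) := \H(\M(\Hradius,z,\graphf)) + 2\,\frac{\cosh(\Hradius)}{\sinh(\Hradius)}, \]
so that a CMC-leaf of the prescribed mean curvature is exactly a zero of $F$. The partial derivative $D_\graphf F$ at $\graphf\equiv0$ is, up to a small perturbation, the stability operator of $\hsphere_\Hradius(z)$; the decay assumptions imply that this coordinate sphere is a strictly regular sphere in the sense of Definition~\ref{Regular_sphere}, so Proposition~\ref{Stability} applies. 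It delivers both the strong deformation estimate $\Vert\deform g\Vert_{\Wkp^{2,p}}\le C\exp(2\Hradius)\Vert\jacobiext*g\Vert_{\Lp^p}$ and the spectral fact that, on the three-dimensional boost subspace, the operator is a perturbation of $\frac{6\mass*}{\sinh(\Hradius)^3}\,\id$---which remains invertible since $\mass*\neq0$.

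For each sufficiently large $\Hradius$, I would solve $F=0$ in two nested steps. First, with $z$ frozen, a Banach fixed-point iteration on the deformation part $\deform{\graphf}$ (powered by the deformation estimate above and the inhomogeneity bound $F(\Hradius,z,0)=\Oof(\exp({-}\decay\Hradius))$ furnished by the asymptotic hyperbolicity) produces a graph function $\graphf$ with $\boost{\graphf}\equiv0$. Second, the three-dimensional boost obstruction is killed by adjusting $z$: an infinitesimal shift in $z$ produces a lapse function lying essentially in the boost subspace (Remark~\ref{LinearizedBoosts}), so the induced finite-dimensional equation for $z$, once rescaled by $\sinh(\Hradius)^3/(6\mass*)$, is a small perturbation of the identity and hence has a unique solution. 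To promote the resulting family $\{\M<\Hradius>\}$ to a genuine foliation of $\outM\setminus\outsymbol K$ I differentiate the CMC equation in $\Hradius$: the lapse of the family solves an inhomogeneous Jacobi equation with source ${-}2\sinh(\Hradius)^{-2}$ and, by Proposition~\ref{Stability}, is $\Wkp^{2,p}$-close to the constant $\frac{\sinh(\Hradius)}{\cosh(\Hradius)}>0$, making the normal speed strictly positive. Openness of the set of admissible $\Hradius$ follows from the implicit function theorem applied to the augmented $(\Hradius,z,\graphf)$-problem; closedness follows from the a~priori estimates of Theorem~\ref{Regularity_Theorem}, which prevent either the graph function or the centre from blowing up along a sequence.

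The main obstacle is the final assertion $\hcenterz(\M<\Hradius>)\to\houtcenterz=0$. As Remark~\ref{Remark_NotControllingTheCenter} explains, the Kazdan--Warner argument of Neves--Tian is unavailable here because the leading non-constant term of $\outric$ need not be of lower order than the metric error. I therefore proceed a~posteriori: once the foliation exists, I insert the leaves $\M<\Hradius>$ into Herzlich's Ricci version of the mass~\eqref{Ricci_Version_Mass} applied to the three spatial conformal Killing fields $X^{(\oi)}$, $\oi\in\{1,2,3\}$. Using the refined $\Wkp^{2,p}$-graph control of Theorem~\ref{Regularity_Theorem_short} together with the explicit form of $X^{(\oi)}$ on a coordinate sphere centred at $z(\Hradius)$, the integral evaluates to $8\pi\mass*\,z^\oi(\Hradius)+o(1)$, while the left-hand side $\mass_\oi=0$ in balanced coordinates; this forces $z(\Hradius)\to0$, which is the desired convergence. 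The subtlety is that with $\decay$ only required to exceed $\frac52$ the non-mass contributions to the Ricci integrals are not a priori negligible, and it is precisely the combination of balanced coordinates with the sharp graph regularity of Theorem~\ref{Regularity_Theorem_short} that drives these contributions to zero in the limit.
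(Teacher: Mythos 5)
Your proposal takes a genuinely different route from the paper. The paper constructs the CMC-leaves via a continuity method in an \emph{artificial time} $\atime\in\interval*0*1$, deforming from the reference Anti-de Sitter-Schwarzschild metric $\outg[0]=\AdSoutg$ to $\outg[1]=\outg$: it introduces a maximal existence interval $\intervalI$ and a regularity interval $\intervalJ$, then shows $\intervalI$ is a neighborhood of $\intervalJ$ (Lemma~\ref{I_is_a_neighborhood_of_J}), $\intervalJ=\intervalI$ (Lemma~\ref{I=J}), and $\intervalI=\interval*0*1$ (Lemma~\ref{I=[0;1]}). You instead propose, for each fixed $\Hradius$, a direct Lyapunov--Schmidt/fixed-point scheme on the augmented unknowns $(\graphf,z)$: solve for the deformation part of $\graphf$ with $z$ frozen, then kill the three-dimensional boost obstruction by adjusting $z$. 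Both are standard attacks on a degenerate-kernel problem and both rely on Proposition~\ref{Stability}, Theorem~\ref{Regularity_Theorem}, balanced coordinates, and Herzlich's Ricci version of the mass. The paper's continuity approach buys two things structurally: (i) it starts from a surface that is exactly concentric around the origin and tracks the center through $\tau$ via the lapse estimates of Lemma~\ref{First_estimates_on_lapse}, so the center never has to be solved for; (ii) it feeds directly into the uniqueness (Theorem~\ref{Uniqueness_Theorem_full}, by running the continuity method from $\atime=1$ backwards) and the perturbative stability (Theorem~\ref{Stability_Theorem_full}).

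Two points in your sketch need more care. First, the ``small perturbation of the identity'' claim for the rescaled finite-dimensional equation in $z$ is exactly the delicate point. Naively, the boost component of the residual $F(\Hradius,z,0)$ is $\Oof(\exp({-}\decay\Hradius))$ while the derivative of $\boost F$ in $z$ is only $\Oof(\mass*\sinh(\Hradius)^{{-}3})$; for $\decay<3$ the required shift in $z$ would then grow like $\exp((3-\decay)\Hradius)$ and the fixed point would escape. The reason this does not happen is that in balanced coordinates the boost projection of the residual is, up to lower order, precisely a spatial component of the mass vector (this is how Lemma~\ref{First_estimates_on_lapse} and the proof of Theorem~\ref{Existence_Theorem_full} exploit the Ricci version of the mass), hence it is $o(\exp({-}3\Hradius))$ rather than $\Oof(\exp({-}\decay\Hradius))$. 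You invoke Herzlich's formula for the final convergence statement, but it is needed already inside the fixed-point argument to make the boost correction bounded. Second, your application of Proposition~\ref{Stability} to the linearization around the \emph{coordinate} sphere $\hsphere_\Hradius(z)$ is not immediate: a regular sphere in Definition~\ref{Regular_sphere} is required to have exactly constant mean curvature. The paper remarks that it suffices for $\H$ to be $\Wkp^{1,p}$-close to constant, so this is repairable, but it needs to be said; the same applies to the controlled-instability hypothesis on $\hsphere_\Hradius(z)$ that feeds Lemma~\ref{Bootstrap_for_trace_free_second_fundamental_form__Lp4}. With these two points filled in, your route is a viable alternative to the paper's $\tau$-continuity argument.
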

Here, we use the definition of the hyperbolic center of a surface and the center of mass as as defined by Cederbaum-Cortier-Sakovich, \cite[Def.~3.10]{cederbaum2015center}, see Definition~\ref{center}. Note that this result does \emph{not} correspond to the (non-)existence result of the center of mass in the asymptotically flat setting, see \cite{cederbaumnerz2013_examples}.\smallskip

\begin{theorem}[Uniqueness of the CMC-foliation]\label{Uniqueness_Theorem_full}
Let $(\outM,\outg*)$ be a three-dimensional $\Ck^2_{\decay,\scdecay}$-asymptotically hyperbolic manifold with timelike mass vector and decay rates $\decay\in\interval{\frac52}*3$ and $\scdecay>3$. For all constants $\c\ge0$, $\eta\in\interval0*4$, and $0\le\zeta<\min\{\frac{2\decay-5}{2\decay},\frac{\scdecay-3}3\}$ there is a constant $\rradius_0=\Cof{\rradius_0}[\decay][\scdecay][\zeta][\c][\eta]$ with the following property:

If $\M\hookrightarrow\outM$ is a closed hypersurfaces diffeomorphic to $\sphere$ with constant mean curvature $\H\equiv{-}2\,\frac{\cosh(\Hradius)}{\sinh(\Hradius)}$ and ${-}(4-\eta)\sinh(\Hradius)^{{-}2}$-controlled instability satisfying
\[ \Rradius \le (1+\zeta)\rradius, \qquad\rradius\ge\rradius_0, \qquad
	\eta>2\text{ or } \c^{-1}\exp(2\Hradius)\le\volume{\M}\le\c\,\exp(2\Hradius), \nopagebreak\]
then $\M$ is the CMC-surface $\M<\Hradius>$ defined in Theorem~\ref{Existence_Theorem_full}.\pagebreak[3]
\end{theorem}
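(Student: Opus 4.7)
The strategy is to argue by contradiction: assuming $\M$ satisfies the hypotheses with mean curvature radius $\Hradius$, I would show that $\M$ must coincide with the leaf $\M<\Hradius>$ from Theorem~\ref{Existence_Theorem_full} once $\rradius_0$ is chosen large enough. To this end, I would write $\M$ as a normal graph over $\M<\Hradius>$, derive an elliptic equation for the graph function $\psi$, and invert the Jacobi operator via Proposition~\ref{Stability} to conclude $\psi\equiv0$.

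First I would apply Theorem~\ref{Regularity_Theorem}. The constraint $\zeta<\frac{2\decay-5}{2\decay}$ guarantees $\decay':=\decay/(1+\zeta)>\frac52$, so after composing with an appropriate hyperbolic isometry both $\M$ and $\M<\Hradius>$ are $\Wkp^{2,p}$-close to geodesic spheres of radius $\Hradius$ centered at points $\hcenterz$ and $\hcenterz<\Hradius>$ in the hyperbolic space, each at distance $\Oof(\exp({-}\outve\Hradius))$ from the hyperbolic center of mass $\houtcenterz$. The key preliminary step, which replaces the Kazdan--Warner identity used by Neves--Tian (unavailable here, cf.~Remark~\ref{Remark_NotControllingTheCenter}), is to establish $\hcenterz=\houtcenterz+\Oof(\exp({-}\outve\Hradius))$ directly from Herzlich's Ricci-version of the mass, Remark~\ref{remarks_on_mass}.\ref{Ricci_version_of_mass}: evaluating the defining surface integrals on $\M$ itself and expanding the conformal vector fields $X^{(\ui)}$ around the geodesic sphere through $\hcenterz$ yields an equation that pins the center down. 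The conditions $\scdecay>3$ and $\zeta<\frac{\scdecay-3}3$ are precisely what make this expansion converge at the required rate.

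With both centers controlled, I would express $\M$ as a normal graph over $\M<\Hradius>$ with graph function $\psi\in\Wkp^{2,p}(\M<\Hradius>)$. Since both surfaces have constant mean curvature ${-}2\,\frac{\cosh(\Hradius)}{\sinh(\Hradius)}$, expanding the mean curvature operator yields
\[ \jacobiext*\psi = \mathcal Q(\psi,\levi*\psi,\levi*\levi*\psi), \]
where $\mathcal Q$ collects the superlinear terms and the inhomogeneities stemming from $\outg-\houtg$ together with the tracefree second fundamental form of $\M<\Hradius>$. Theorem~\ref{Regularity_Theorem} applied to both surfaces bounds $\Vert\mathcal Q\Vert_{\Lp^p(\M<\Hradius>)}$ by $C\,\exp({-}\decay'\Hradius)\,\Vert\psi\Vert_{\Wkp^{2,p}(\M<\Hradius>)}$. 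Splitting $\psi=\boost\psi+\deform\psi$ and invoking Proposition~\ref{Stability}, the deformation part satisfies $\Vert\deform\psi\Vert_{\Wkp^{2,p}}\le C\,\exp((2-\decay')\Hradius)\Vert\psi\Vert_{\Wkp^{2,p}}$, while the boost part is controlled by the preliminary centering: the difference of the hyperbolic centers feeds directly into $\Vert\boost\psi\Vert_{\Wkp^{2,p}}\le C\,\exp({-}\outve\Hradius)$ via the identification of $\boostLp^2(\M<\Hradius>)$ with linearized boosts (cf.~Remark~\ref{LinearizedBoosts}). Summing gives a contraction estimate $\Vert\psi\Vert_{\Wkp^{2,p}}\le C\,\exp({-}\outve'\Hradius)\Vert\psi\Vert_{\Wkp^{2,p}}$ with $\outve'>0$ depending only on $\decay$, $\scdecay$, $\zeta$, $\eta$ and $\c$, forcing $\psi\equiv0$ for $\rradius\ge\rradius_0$.

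The hardest step, and the principal new ingredient compared with the earlier uniqueness arguments of \cite{NevesTianExistenceCMC_I,NevesTianExistenceCMC_II,nerz2015CMCfoliation}, is the centering in the second paragraph: without a distinguished leading term in $\outric$ one cannot algebraically identify the hyperbolic center of an admissible CMC surface, and the Ricci-version of the mass is used precisely to replace this algebraic identification by an integral one. Once this obstacle is removed, the remaining linear and nonlinear estimates fit into the continuity framework of \cite{metzger2007foliations,nerz2015CMCfoliation} adapted to the asymptotically hyperbolic setting.
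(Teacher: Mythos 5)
Your high-level strategy---write $\M$ as a normal graph $\psi$ over the leaf $\M<\Hradius>$ of Theorem~\ref{Existence_Theorem_full}, invert the Jacobi operator, and conclude $\psi\equiv0$---is a plausible-looking alternative to the paper's argument, but it has a genuine gap at exactly the point where the paper says a direct argument is not available. The paper's proof instead runs the continuity path \emph{backwards}: it sets up the family $\Phi:\interval*0*1\times\sphere\to\outM$ of CMC surfaces for $\outg[\atime]=\atime\,\outg+(1-\atime)\,\AdSoutg$ with the prescribed data at $\atime=1$ being $\M$, shows $\intervalI=\interval*0*1$ exactly as in the existence proof, and then uses Brendle's uniqueness of CMC spheres in the $\AdSoutg$ reference metric at $\atime=0$ together with the local uniqueness of the $\atime$-flow (Lemma~\ref{I_is_a_neighborhood_of_J}) to conclude $\M=\M<\Hradius>$. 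The rigid uniqueness is imported from $\atime=0$ and transferred along the path; it is never established by a contraction estimate at $\atime=1$.

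The gap in your argument is in the claimed contraction. You establish that the boost component satisfies an \emph{absolute} bound, $\Vert\boost\psi\Vert_{\Wkp^{2,p}}\le C\,\exp({-}\outve\Hradius)$, via the centering of $\M$ against the center of mass. Together with the deformation estimate $\Vert\deform\psi\Vert\le C\,\exp((2-\decay')\Hradius)\Vert\psi\Vert$, this only yields $\Vert\psi\Vert\le C\,\exp({-}\outve\Hradius)$, i.e.\ $\psi$ small, not $\psi=0$. This is \emph{not} a contraction estimate of the form $\Vert\psi\Vert\le\kappa\Vert\psi\Vert$ with $\kappa<1$, and one cannot conclude $\psi\equiv0$ from it. Worse, if you try to turn the boost bound into a contractive one by applying Proposition~\ref{Stability} to $\boost\psi$, the nearly degenerate boost eigenvalues of $\jacobiext*$ (of size $\sim\mass*\,\sinh(\Hradius)^{-3}$) force an inverse of size $\sim\exp(3\Hradius)$; estimating the nonlinearity by $\Vert\mathcal Q\Vert_{\Lp^p}\lesssim\exp({-}\decay'\Hradius)\Vert\psi\Vert$ then gives $\Vert\boost\psi\Vert\lesssim\exp((3-\decay')\Hradius)\Vert\psi\Vert$, which \emph{grows} (as $\decay'<3$), so the contraction fails on exactly the modes that matter. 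This is precisely the obstruction described in Remark~\ref{Remark_NotControllingTheCenter}: in the absence of a distinguished leading term in $\outric$, there is no purely elliptic way to pin $\psi$ on the boost modes, and the paper resolves it by the $\atime$-flow, where Lemma~\ref{First_estimates_on_lapse} controls the boost part of the \emph{lapse}---a derivative in $\atime$---rather than the graph function itself. Your use of Herzlich's Ricci version of the mass to identify the center of $\M$ is a sensible idea and likely yields smallness of the center discrepancy, but this is not enough to close the argument to exact equality, and making that step rigorous would still leave the degenerate-eigenvalue issue above unresolved.
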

Furthermore, our proof of this existence result includes the following stability and regularity statements for each of the leaves of the foliation.\nopagebreak
\begin{theorem}[Regularity of the CMC-leaves]\label{Regularity_Theorem_full}
Let $p\in\interval1\infty$ be a constant and $(\outM,\outg*)$ be a three-dimensional $\Ck^2_{\decay,\scdecay}$-asymptotically hyperbolic manifold with timelike mass vector and decay rates $\decay\ge\frac52+\outve\in\interval{\frac52}*3$, $\scdecay\ge3+\outve$. There exist a constant $C=\Cof[\outve][\oc][\mass*]$ and functions $\graphf<\Hradius>\in\Wkp^{2,p}(\hsphere_\Hradius(\centerz<\Hradius>))$ and $\graphf<\Hradius>'\in\Wkp^{2,p}(\hsphere_\Hradius(\outcenterz))$ such that $\M = \hgraph(\graphf<\Hradius>) = \hgraph(\graphf<\Hradius>')$ and
\begin{equation*}
  \Vert\graphf<\Hradius>'\Vert_{\Wkp^{2,p}(\hsphere_\Hradius(\outcenterz))}
	\le C\,\exp((\frac2p-\outve)\Hradius), \quad
 \Vert\zFundtrf\Vert_{\Lp^p(\M)} + \exp({-}2\Hradius)\Vert\graphf<\Hradius>\Vert_{\Wkp^{2,p}(\hsphere_\Hradius(\centerz<\Hradius>))}
	\le C\,\exp((\frac2p-\decay)\Hradius),
\end{equation*}
where $\M<\Hradius>$, $\hsphere_r(p)$, $\centerz<\Hradius>$, $\outcenterz$ denote the CMC-leaves of Theorem~\ref{Existence_Theorem_full}, the geodesic ball of radius $r$ around $p\in\hyperbolicspace$, the hyperbolic centers of $\M<\Hradius>$, and the hyperbolic center of mass $\outcenterz$, respectively.

Furthermore, the $i^{\text{th}}$-smallest eigenvalue $\ewjac<\Hradius>_i$ of the {\normalfont(\hspace{-.05em}}negative{\hspace{.05em}\normalfont)} stability operator of $\M<\Hradius>$ satisfies
\begin{equation*}
 \ewjac<\Hradius>_0 \le \frac{{-}3}{2\sinh(\Hradius)^2}, \quad
 \vert\ewjac<\Hradius>_i-\frac{6\,\mass*}{\sinh(\Hradius)^3} \vert \le C\,\exp(-(3+\outve)\,\Hradius),\quad
 \ewjac<\Hradius>_j \ge \frac 3{2\sinh(\Hradius)^2}
\end{equation*}
where $i\in\{1,2,3\}$, $j>3$, and where $\outve>0$ with $\decay\ge\frac52+\outve$ and $\scdecay\ge3+\outve$.
\end{theorem}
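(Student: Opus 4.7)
My plan is to assemble Theorem~\ref{Regularity_Theorem_full} from three ingredients already in place: Theorem~\ref{Regularity_Theorem} (for the graph representation over $\hsphere_\Hradius(\centerz<\Hradius>)$ and the bound on $\zFundtrf$), the existence argument of Theorem~\ref{Existence_Theorem_full} (which will also furnish the quantitative convergence $\centerz<\Hradius>\to\outcenterz$), and Proposition~\ref{Stability} (for the eigenvalues). Throughout I work in balanced coordinates so that $\outcenterz=0$ and $\mass*=\mass^0$.

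First I would apply Theorem~\ref{Regularity_Theorem} to each $\M<\Hradius>$. The CMC-leaves produced by Theorem~\ref{Existence_Theorem_full} are, by construction, graphs over coordinate spheres with $\Wkp^{2,p}$-small graph functions and ${-}(4-\eta)\sinh(\Hradius)^{-2}$-controlled instability (strict stability when $\mass*>0$), so for every $\zeta<1-\frac{2}{\decay}$ the hypotheses of Theorem~\ref{Regularity_Theorem} are met once $\Hradius$ is large. This at once gives the $\Lp^p$-estimate on $\zFundtrf$ at rate $\exp((\frac{2}{p}-\decay)\Hradius)$. It also supplies a hyperbolic isometry $\hiso<\Hradius>$ and a function $\tilde\graphf<\Hradius>\in\Wkp^{2,p}(\sphere_\Hradius(0))$ with $\hiso<\Hradius>(\M<\Hradius>)=\hgraph\tilde\graphf<\Hradius>$ and $\Vert\tilde\graphf<\Hradius>\Vert_{\Wkp^{2,p}}\le C\exp((2+\frac{2}{p}-\decay)\Hradius)$. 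Unwinding $\hiso<\Hradius>^{-1}$, the underlying sphere becomes $\hsphere_\Hradius(q<\Hradius>)$ for some $q<\Hradius>\in\hyperbolicspace^3$; a short calculation from Definition~\ref{center} (using the smallness of $\tilde\graphf<\Hradius>$ to linearize the defining integrals) identifies $q<\Hradius>$ with $\centerz<\Hradius>$ up to an error of order $\exp((2-\decay)\Hradius)$, which is absorbed into the graph function to produce $\graphf<\Hradius>\in\Wkp^{2,p}(\hsphere_\Hradius(\centerz<\Hradius>))$ satisfying the stated bound.

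For the second graph function $\graphf<\Hradius>'$ over $\hsphere_\Hradius(\outcenterz)$, the key input is the quantitative convergence $\hvolume*(\centerz<\Hradius>,\outcenterz)\le C\exp({-}\outve\Hradius)$, because once this is available, $\hsphere_\Hradius(\centerz<\Hradius>)$ is a graph of size $\Oof(\exp({-}\outve\Hradius))$ over $\hsphere_\Hradius(\outcenterz)$ in $\Ck^k$, and composing with the much smaller $\graphf<\Hradius>$ (size $\exp((2-\decay)\Hradius)=\exp(-(\frac{1}{2}+\outve)\Hradius)$ pointwise by Sobolev embedding) gives $\graphf<\Hradius>'$ with leading rate inherited from the center shift. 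This quantitative center convergence is the main obstacle, as flagged in Remark~\ref{Remark_NotControllingTheCenter}: the Kazdan--Warner strategy of Neves-Tian is unavailable here because $\outric$ is no longer determined by a mass aspect tensor at its leading non-constant order. My plan is to test $0=\H<\Hradius>+2\coth(\Hradius)$ against the three linearized-boost eigenfunctions (which span the approximate kernel of the linearized mean curvature, cf.\ Remark~\ref{LinearizedBoosts}), expand the resulting integrals against the Ricci-version of the mass (Remark~\ref{remarks_on_mass}.\ref{Ricci_version_of_mass}), and isolate the three spatial components $\mass_\oi$ of the mass vector plus a term linear in the coordinates of $\centerz<\Hradius>$; in balanced coordinates $\mass_\oi=0$, and the coefficient of $\centerz<\Hradius>$ is $\asymp\mass*\sinh(\Hradius)^{-3}$ by Proposition~\ref{Stability}, so the vanishing of the test integrals forces $\centerz<\Hradius>=\Oof(\exp({-}\outve\Hradius))$.

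Finally, the eigenvalue statement follows from Proposition~\ref{Stability} once we note that every $\M<\Hradius>$ is strictly regular in the sense of Definition~\ref{Regular_sphere} (immediate from Step~1 and the scalar-curvature decay assumption $\scdecay\ge 3+\outve$). The three boost-type eigenvalues obey $|\ewjac<\Hradius>_i - 6\HmHaw(\M<\Hradius>)/\sinh(\Hradius)^3|\le C\exp(-(3+\outve)\Hradius)$ by~\eqref{Stability__ewjac}; the convergence $\HmHaw(\M<\Hradius>)\to\mass^0$ at rate $\exp(-\outve\Hradius)$ (from the Gau\ss--Bonnet theorem, the Gau\ss\ equation, and the $\zFundtrf$-estimate of Step~1) then replaces $\HmHaw$ by $\mass*=\mass^0$. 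On the deformation subspace $\deformLp^2(\M<\Hradius>)$, \eqref{Stability__d} gives $\Vert\deform g\Vert_{\Lp^2}\le\sinh(\Hradius)^2\Vert\jacobiext*\deform g\Vert_{\Lp^2}$, hence every deformation eigenvalue has absolute value at least $\sinh(\Hradius)^{-2}$; direct computation of $\jacobiext$ on constants gives $\ewjac<\Hradius>_0\approx{-}2\sinh(\Hradius)^{-2}\le{-}\tfrac{3}{2}\sinh(\Hradius)^{-2}$, and for $j>3$ the estimate $\ewjac<\Hradius>_j\ge\ewlap_j-2\sinh(\Hradius)^{-2}-o(\sinh(\Hradius)^{-2})\ge\tfrac{3}{2}\sinh(\Hradius)^{-2}$ uses the spectral gap in Definition~\ref{Canonical_partition} ($\ewlap_j>\tfrac{7}{2}\sinh(\Hradius)^{-2}$) built into the boost/deformation splitting.
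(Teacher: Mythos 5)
The parts of your proposal that apply Theorem~\ref{Regularity_Theorem} for the $\zFundtrf$-estimate and the graph over $\hsphere_\Hradius(\centerz<\Hradius>)$, and that derive the eigenvalue statements from Proposition~\ref{Stability} together with $\HmHaw(\M<\Hradius>)\to\mass^0$, all match the paper's route. The crux, as you correctly flag, is the quantitative control of $\centerz<\Hradius>-\outcenterz$, and here your method diverges from the paper and runs into exactly the obstruction spelled out in Remark~\ref{Remark_NotControllingTheCenter}. Testing $0=\H<\Hradius>+2\coth(\Hradius)$ against boost eigenfunctions is vacuous as written ($\H<\Hradius>$ \emph{is} the constant, so the integrand is identically zero); what you actually mean is to compare $\M<\Hradius>$ to a reference sphere $\hsphere_\Hradius(q)$, linearize the CMC condition in the graph function, and project onto the boost kernel --- the Huang/Neves--Tian strategy. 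That strategy works precisely when $\outric$ is \emph{pointwise} determined at its leading non-constant order by the mass aspect, so that the inhomogeneity can be identified with $\mass_\oi(q)$ up to a faster-decaying remainder. Here $\decay$ can be as small as $\tfrac52+\outve<3$, so the $\exp(-\decay\rad)$-order piece of $\outric$ is the same size as the metric error and is unconstrained beyond the decay bound; the projected integral picks up this uncontrolled piece, and the key deduction ``vanishing of the test integrals forces $\centerz<\Hradius>=\Oof(\exp(-\outve\Hradius))$'' has no justification you can supply from the hypotheses. You also silently need the \emph{rate} at which the finite-surface Ricci-version integral of Remark~\ref{remarks_on_mass}.\ref{Ricci_version_of_mass} approaches $\mass_\oi$, which is not established.

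The paper's actual proof is short because it re-uses the $\atime$-flow machinery of Section~\ref{proofmaintheorem} instead: work in balanced coordinates so that $\outcenterz=0$ and $\mass=(\mass*,0,0,0)$; at $\atime=0$ the CMC leaf of $\outg[0]=\AdSoutg$ is \emph{exactly} a coordinate sphere centered at $\outcenterz$; the second case of Lemma~\ref{First_estimates_on_lapse}, which is where the balancing enters (via \eqref{jacobiext_rnu}, the relevant boost-projected integral is in highest order the change of the spatial mass components along the flow, and that change vanishes because every $\outg[\atime]$ shares the balanced mass vector), gives $\Vert\rnu[\atime]\Vert_{\Wkp^{2,p}(\M[\atime])}\le C\exp((\tfrac2p-\outve)\Hradius)$ uniformly in $\atime$ once one argues as in Lemma~\ref{I=J}; and the estimate for $\graphf<\Hradius>'$ over $\hsphere_\Hradius(\outcenterz)$ then follows by integrating $\rnu[\atime]$ from $\atime=0$ to $\atime=1$. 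This is the ``a posteriori ansatz'' promised in Remark~\ref{Remark_NotControllingTheCenter}: instead of extracting the center from the single surface $\M<\Hradius>$ directly, one tracks it continuously from a known starting surface, which keeps the argument at the level of the mass \emph{difference} between $\outg$ and $\AdSoutg$.
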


\begin{assumptions}[Assumptions for the following]\label{Assumptions_for_the_following}
Let $(\outM,\outg,\outx)$ be a three-di\-men\-sio\-nal $\Ck^2_{\decay,\scdecay}$-asymp\-to\-tic\-ally hyperbolic Riemannian manifold with \emph{positive} mass $\mass$ and decay rates $\decay>\frac52$ and $\scdecay>3$ and set $\outve:=\min\{\decay-\frac52,\scdecay-3,\frac12\}$. Without loss of generality, the balancing condition holds, \ie
\begin{equation*} \mass = (\totalmass,0,0,0), \labeleq{balanced_assumption}\end{equation*}
see Remark~\ref{remarks_on_mass}. Let $\AdSoutg$ be the Anti-de Sitter metric of mass $\mass=(\mass*,0,0,0)$, in particular
\[ \AdSoutg := \d r^2 + \sinh(\rad)^2\,(1+\frac{\mass+{\normalfont \textrm{Err}}}{3\,\sinh(\rad)^3})\,\sphg*, \]
where $\text{\normalfont Err}\in\Ck^\infty(\outM)$ satisfies
\[ \vert \houtlevi*^{(k)}\,{\normalfont \textrm{Err}}\vert_{\houtg*} \le C_k\,\exp({-2}\,\rad) \qquad\forall\,k\in\N. \]
Furthermore, let $\{\outg[\atime]*\}_{\atime\in\interval*0*1}$ be the family of convex combinations of $\outg[0]*:=\AdSoutg*$ and $\outg[1]*:=\outg*$, \ie $\outg[\atime]*:=\AdSoutg*+\atime\,(\outg*-\AdSoutg*)$.
\end{assumptions}
We note that the idea to choose \emph{balanced coordinates} was already explained and used by Cederbaum-Cortier-Sakovich in \cite[Thm~3.9]{cederbaum2015center}---under their assumptions.

\begin{assumptions}[Existence and regularity intervals]
Let $\Hradius\gg0$ be a constant. Assume that $\Phi:\intervalI\times\sphere\to\outM$ is a $\Ck^1$-map such that
\begin{enumerate}[nolistsep,label={\normalfont($\intervalI$-\arabic{*})}]
\item $\intervalI\subseteq\interval*0*1$ is a interval with $0\in\intervalI$; \label{IntervalI_IT}
\item $\M[\atime]:=\Phi(\atime,\sphere)$ has constant mean curvature $\H[\atime]\equiv{-}2\,\frac{\cosh(\Hradius)}{\sinh(\Hradius)}$ with respect to $\outg[\atime]*:=\AdSoutg*+\atime\,(\outg*-\AdSoutg*)$;
\item $\M[0]=\sphere_{\rradius(\Hradius)}(0)$ for the specific radius $\rradius(\Hradius)$ for which $\H[0]<\rradius(\Hradius)>\equiv{-}2\,\frac{\cosh(\Hradius)}{\sinh(\Hradius)}$; \label{IntervalI_ID}
\item $\partial*_\atime\Phi$ is orthogonal to $\M[\atime]$ for every $\atime\in\intervalI$.
\end{enumerate}
Furthermore, let $\intervalI$ be maximal, \ie if $\Phi':\intervalI'\times\sphere\to\outM$ satisfies the above assumptions for the same $\Hradius$, then $\intervalI'\subseteq\intervalI$.\smallskip

Let $\c\ge0$, $\eta\in\interval0*4$, and $\zeta\in\interval*0{1-\frac5{5+2\outve}}$ be fixed constants and let $\intervalJ\subseteq\intervalI$ be the maximal interval containing~$0$ such that
\begin{enumerate}[nolistsep,label={\normalfont($\intervalJ$-\arabic{*})}]
\item $\eta>2$ or $\c^{-1}\exp(2\Hradius)\le\volume{\M}\le\c\,\exp(2\Hradius)$ and (in both cases) $\Rradius \le (1+\zeta)\rradius$; \label{IntervalJ-regularity}
\item $\M[\atime]$ has ${-}(4-\eta)\,\sinh(\Hradius)^{-2}$-con\-trolled instability. \label{IntervalJ-instability}.
\end{enumerate}
\end{assumptions}
If we choose $\Hradius$ and $\c$ sufficiently large then such a $\Phi$ exists for some $\intervalI\supseteq\intervalJ\supseteq\{0\}$, because $\AdSoutg*$ is the AdS-metric with mass $\mass\neq0$. Now, we first show that $\intervalI$ contains a neighborhood of $\intervalJ$ in $\interval*0*1$, then that $\intervalJ$ is open and closed in $\intervalI$, \ie $\intervalJ=\intervalI$. This implies that $\intervalI$ is open in $\interval*0*1$ and a simple convergence argument finishes the proof that $\intervalI=\interval*0*1$.
\begin{lemma}[\texorpdfstring{$\intervalI$}I is a neighborhood of \texorpdfstring{$\intervalJ$}J]\label{I_is_a_neighborhood_of_J}
There is a constant $\Hradius_0=\Cof{\Hradius_0}[\outve][\mass][\eta][\zeta][\zeta'][\c]$ such that $\intervalI$ contains a neighborhood of $\intervalJ$ in $\interval*0*1$ if $\Hradius>\Hradius_0$. Furthermore, $\Phi$ is uniquely defined on a neighborhood of $\intervalJ$.
\end{lemma}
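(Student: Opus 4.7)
The plan is to apply the implicit function theorem at each $\atime_0\in\intervalJ$. The essential input is the invertibility of the stability operator $\jacobiext[\atime_0]*$ of $\M[\atime_0]$ with respect to $\outg[\atime_0]*$; this is precisely the content of Proposition~\ref{Stability}, which gives a lower bound of order $\sinh(\Hradius)^{-2}$ for its eigenvalues on the deformation subspace $\deformLp^2(\M[\atime_0])$ and eigenvalues close to $6\mass*/\sinh(\Hradius)^3$ on the three-dimensional linearized boost subspace $\boostLp^2(\M[\atime_0])$. The latter are nonzero because the mass vector is timelike and balanced by~\eqref{balanced_assumption}.

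Before invoking Proposition~\ref{Stability} I need $\M[\atime_0]\in\strregsphere{\c'}$ in $(\outM,\outg[\atime_0]*)$ for some uniform $\c'$. Since $\outg[\atime]*=\AdSoutg*+\atime(\outg*-\AdSoutg*)$ is a convex combination of two $\Ck^2_{\decay,\scdecay}$-asymptotically hyperbolic metrics, it is itself such with constants uniform in $\atime\in\interval*0*1$. The hypotheses~\ref{IntervalJ-regularity} and~\ref{IntervalJ-instability} then let me invoke Theorem~\ref{Regularity_Theorem} for $\outg[\atime_0]*$ (valid for $\Hradius$ large enough, depending only on the listed parameters) to obtain $\M[\atime_0]\in\regsphere{\c'}$; the scalar-curvature bound needed for strict regularity follows from the uniform decay of $\outsc[\atime_0]$.

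Next, parametrize closed hypersurfaces near $\M[\atime_0]$ as normal graphs $\Sigma_f$ with graph function $f\in\Wkp^{2,p}(\M[\atime_0])$, and consider
\[ F\colon(\atime,f)\mapsto\H(\Sigma_f,\outg[\atime]*)+2\,\tfrac{\cosh(\Hradius)}{\sinh(\Hradius)}\in\Lp^p(\M[\atime_0]), \]
defined on a small neighborhood of $(\atime_0,0)$ in $\interval*0*1\times\Wkp^{2,p}(\M[\atime_0])$. The partial derivative $D_fF(\atime_0,0)$ is precisely $\jacobiext[\atime_0]*$, which by the above is an isomorphism $\Wkp^{2,p}(\M[\atime_0])\to\Lp^p(\M[\atime_0])$ with inverse of operator norm of order $\sinh(\Hradius)^3/\mass*$. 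The $\atime$-derivative $\del_\atime F$ at $(\atime_0,0)$ is uniformly bounded because $\outg*-\AdSoutg*$ and its first derivative are bounded on $\M[\atime_0]$. The implicit function theorem then produces a unique $\Ck^1$-curve $\atime\mapsto f(\atime)$ of CMC solutions in a neighborhood of $\atime_0$; composing with a tangential reparametrization of each leaf to enforce normality of $\del_\atime\Phi$ extends $\Phi$ to this neighborhood, and the uniqueness part of the implicit function theorem yields the uniqueness assertion of the lemma.

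The main obstacle is the slow decay of the three boost eigenvalues of $\jacobiext[\atime_0]*$: they are of order $\sinh(\Hradius)^{-3}$ rather than $\sinh(\Hradius)^{-2}$, so the operator norm of $(\jacobiext[\atime_0]*)^{-1}$ grows like $\sinh(\Hradius)^3/\mass*$ with $\Hradius$. This forces the $\atime$-neighborhood supplied by the implicit function theorem to shrink as $\Hradius$ grows; however, the quantitative bounds of Proposition~\ref{Stability} are sharp enough to produce a neighborhood whose size is uniform in $\atime_0\in\intervalJ$ once $\Hradius>\Hradius_0$ for some $\Hradius_0$ depending only on the listed parameters. The argument breaks down entirely if the mass vector fails to be timelike, because then $\mass*$ could vanish and $\jacobiext[\atime_0]*$ would cease to be invertible.
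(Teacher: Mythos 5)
Your proof is correct and matches the paper's intended argument: the paper proves this lemma by citing Proposition~\ref{Stability} and deferring the details to the Euclidean analogue \cite[Lemma~3.5]{nerz2015CMCfoliation}, which is precisely the implicit function theorem argument you spell out, with the invertibility of the stability operator on $\Wkp^{2,p}\to\Lp^p$ coming from the eigenvalue estimates~\eqref{Stability__t}--\eqref{Stability__ewjac} and the timelikeness of the mass guaranteeing the boost eigenvalues stay away from zero. One small clarification: for this particular lemma you only need \emph{some} neighborhood of $\intervalJ$ inside $\intervalI$ for each fixed $\Hradius>\Hradius_0$, so the apparent difficulty you raise about the $\sinh(\Hradius)^3/\mass*$ operator norm shrinking the IFT neighborhood is harmless here; the sharper lapse estimates of Lemma~\ref{First_estimates_on_lapse} (which exploit the special boost structure to make $\boost\rnu$ bounded rather than merely controlled by $\sinh(\Hradius)^3\Vert\jacobiext*\rnu\Vert$) are only needed later in Lemmas~\ref{I=J} and~\ref{I=[0;1]}.
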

\begin{proof}
This is implied by Prop.~\ref{Stability}. The full details of the proof are identical to the ones explained in \cite[Lemma~3.5]{nerz2015CMCfoliation}.
\end{proof}
As in \cite[Lemma~3.5]{nerz2015CMCfoliation}, we see furthermore that $\outx\circ\Phi$ is differentiable as a map from $\intervalI$ to $\Wkp^{2,p}(\sphere;\R^3)$. Thus, all quantities used in the definition of $\intervalJ$ depend continuously on $\atime\in\intervalJ$. As the assumptions of elements in $\intervalJ$ are closed ones, this implies that $\intervalJ$ is closed in $\intervalI$. Therefore, we only have to prove that $\intervalJ$ is open in $\intervalI$ to conclude $\intervalI=\intervalJ$. To do so, we need the following estimates on the lapse function of $\Phi$:
\begin{lemma}[First estimates on the lapse function]\label{First_estimates_on_lapse}
Let $\rnu[\atime]:=\outg[\atime]*(\partial*_\tau\Phi(\atime),\nu[\atime])$ denote the lapse function of $\Phi$, where $\nu[\atime]$ is the outer unit normal of $\M\hookrightarrow(\outM,\outg[\atime]*)$. For every $p\in\interval1\infty$, there is a constant $C=\Cof[\outve][\mass][\eta][\zeta][\c][p]$ independent of $\Hradius$ and $\atime$ such that for every $\atime\in\intervalJ$
\begin{align*}
	\Vert\deform{\rnu[\atime]}\Vert_{\Wkp^{2,p}(\M[\atime])}
		\le{}& C\,\exp((2+\frac2p-\decay)\,\Hradius),		&
	\Vert\rnu[\atime] \Vert_{\Wkp^{2,p}(\M[\atime])}
		\le{}& C\,\exp(\frac2p\,\Hradius).
	\labeleq{first_estimates_on_lapse}
\intertext{If the isometry $\hiso$ of Theorem~\ref{Regularity_Theorem} can be chosen such that $\hiso\circ\outx$ are balanced coordinates, $\mass(\hiso\circ\outx)=(\mass*,0,0,0)$, then}
	\Vert\deform{\rnu[\atime]}\Vert_{\Wkp^{2,p}(\M[\atime])}
		\le{}& C\,\exp((2+\frac2p-\decay)\,\Hradius),		&
	\Vert\rnu[\atime] \Vert_{\Wkp^{2,p}(\M[\atime])}
		\le{}& C\,\exp((\frac2p-\outve)\,\Hradius).
	\labeleq{second_estimates_on_lapse}
\end{align*}
\end{lemma}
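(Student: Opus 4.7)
The strategy is to derive the inhomogeneous Jacobi equation for $\rnu[\atime]$ by differentiating the CMC condition in $\atime$ and to invert it using the spectral decomposition of Proposition~\ref{Stability}. Differentiating the defining relation $\H[\atime](\M[\atime])\equiv -2\cosh(\Hradius)/\sinh(\Hradius)$ with respect to $\atime$, using $\partial_\atime\Phi=\rnu[\atime]\nu[\atime]$, and noticing that the mean curvature also depends on $\atime$ through $\outg[\atime]$ yields
\[
\jacobiext[\atime]\,\rnu[\atime] \;=\; -G[\atime],
\]
where $G[\atime]$ is the infinitesimal change of the mean curvature of the fixed surface $\M[\atime]$ when the background metric moves with velocity $\partial_\atime\outg[\atime]=\outg-\AdSoutg$. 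Explicitly, $G[\atime]$ is a linear first-order differential expression in $\outg-\AdSoutg$ whose coefficients are built from the geometry of $\M[\atime]\subset(\outM,\outg[\atime])$. By Assumption~\ref{Assumptions_for_the_following}, $\outg-\AdSoutg$ and its first covariant derivative decay like $\exp(-\decay\rad)$; by~\ref{IntervalJ-regularity} and Theorem~\ref{Regularity_Theorem} (applied to $\M[\atime]$ as a CMC surface in $(\outM,\outg[\atime])$ with bounds uniform in $\atime$), the coefficients are uniformly controlled, so one obtains the pointwise bound $|G[\atime]|\le C\exp(-\decay\Hradius)$ and
\[
\|G[\atime]\|_{\Lp^p(\M[\atime])} \le C\exp\!\bigl((\tfrac{2}{p}-\decay)\Hradius\bigr).
\]

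The deformation estimate appearing in both~\eqref{first_estimates_on_lapse} and~\eqref{second_estimates_on_lapse} is then immediate from the $\Wkp^{2,p}$ bound for the deformation part in Proposition~\ref{Stability}:
\[
\|\deform\rnu[\atime]\|_{\Wkp^{2,p}(\M[\atime])} \;\le\; C\exp(2\Hradius)\,\|G[\atime]\|_{\Lp^p(\M[\atime])} \;\le\; C\exp\!\bigl((2+\tfrac{2}{p}-\decay)\Hradius\bigr).
\]
For the boost part in the general case of~\eqref{first_estimates_on_lapse}, the corresponding estimate in Proposition~\ref{Stability}---amplifying $\|G\|_{\Lp^p}$ by $\sinh(\Hradius)^3/(6|\HmHaw|)$---combines with the deform bound, the continuous embedding $\Wkp^{2,p}\hookrightarrow\Wkp^{3,p}$, and the assumption $\decay>\frac52$ to yield $\|\rnu[\atime]\|_{\Wkp^{2,p}}\le C\exp((\tfrac{2}{p})\Hradius)$.

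The delicate step is the improved bound on $\rnu[\atime]$ in~\eqref{second_estimates_on_lapse}, valid under the balancing assumption. The three $\Lp^2$-moments
\[
c_i[\atime] \;:=\; \int_{\M[\atime]} G[\atime]\,b_i\,\d\mug[\atime], \qquad i\in\{1,2,3\},
\]
of the source against the boost eigenfunctions $b_i$ of the stability operator determine $\boost G[\atime]$, hence $\boost\rnu[\atime]$. Because, by Proposition~\ref{Stability} and Remark~\ref{LinearizedBoosts}, each $b_i$ agrees with the restriction to $\M[\atime]$ of a Killing boost vector field of $\houtg$ up to $\Lp^2$-errors of order $\exp(-\outve\Hradius)$, an integration by parts combined with the fact that $G[\atime]$ is the first variation of the mean curvature operator along $\outg-\AdSoutg$ identifies $c_i[\atime]$, to leading order, with the Chru{\'s}ciel--Herzlich surface integrand from~\eqref{Ricci_Version_Mass} paired against the $i$-th boost Killing field. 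This integrand in turn converges to $\mass^i(\outx)$, which vanishes in balanced coordinates by~\eqref{balanced_assumption}. Consequently $\|\boost G[\atime]\|_{\Lp^p(\M[\atime])}$ gains an extra factor of $\exp(-\outve\Hradius)$, which, fed back into the boost bound of Proposition~\ref{Stability} and combined with the deform estimate, delivers the improved second line of~\eqref{second_estimates_on_lapse}. The main obstacle is precisely this identification: one must show that the leading behavior of $c_i[\atime]$ is indeed the mass integrand and carefully track the error coming from the fact that $b_i$ is only approximately a Killing field, absorbing it into the allowed $\exp(-\outve\Hradius)$-loss.
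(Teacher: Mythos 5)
Your treatment of the deform bound and of the balanced case follows the paper's own proof: the key observation that the boost moments of the Jacobi source $\jacobiext*\rnu$ are, up to controlled errors, the spatial components of the Chru\'sciel--Herzlich mass integrand is precisely how the paper obtains the improved bound \eqref{second_estimates_on_lapse}. However, your argument for the unbalanced bound $\Vert\rnu[\atime]\Vert_{\Wkp^{2,p}}\le C\exp(\frac2p\Hradius)$ in \eqref{first_estimates_on_lapse} has a genuine gap. Amplifying $\Vert\jacobiext*\rnu\Vert_{\Lp^p}\le C\exp((\frac2p-\decay)\Hradius)$ by the operator-norm factor $\sinh(\Hradius)^3/(6\vert\HmHaw\vert)$ from Proposition~\ref{Stability} gives only
\[
 \Vert\boost\rnu\Vert_{\Wkp^{3,p}(\M)} \le C\exp\bigl((3+\tfrac2p-\decay)\Hradius\bigr)=C\exp\bigl((\tfrac12+\tfrac2p-\outve)\Hradius\bigr),
\]
which exceeds $C\exp(\frac2p\Hradius)$ by a factor $\exp((\frac12-\outve)\Hradius)$ whenever $\decay<3$, \ie whenever $\outve<\frac12$; so the operator-norm bound alone does not suffice. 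The paper closes this by carrying out, in the \emph{general} case too, the same mass identification you reserve for the balanced case: $\boost\rnu$ is approximated by $\frac{\sinh(\Hradius)^3}{6\HmHaw}\sum_i\eflap_i\int\eflap_i\,\jacobiext*\rnu\,\d\mug$, the eigenfunctions $\eflap_i$ are replaced by normalized normal components $\nu_\oi$, the resulting integrals are recognized (via Definition~\ref{Definition_Mass} and equation~\eqref{jacobiext_rnu}) as the spatial mass components $\mass^\oi$ while $\HmHaw\approx\mass^0$, and finally the \emph{timelike} hypothesis $\vert(\mass^\oi)_\oi\vert_{\R^3}<\vert\mass^0\vert$ yields the pointwise bound $\vert\boost\rnu\vert\le 2$, whence $\Vert\boost\rnu\Vert_{\Lp^p}\le 2\,\volume{\M}^{1/p}\le C\exp(\frac2p\Hradius)$. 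The mass-vector identification and the timelike assumption are therefore essential already for \eqref{first_estimates_on_lapse}, not merely for the balanced refinement. (Also, the Sobolev embedding you cite should read $\Wkp^{3,p}\hookrightarrow\Wkp^{2,p}$, though this is cosmetic.)
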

\begin{proof}
Per definition, we know
\begin{equation*} \jacobiext*\rnu = {-}\partial@{\H[\atime](\M[\atime_0])} = {-}\outmomden*(\nu) + \div\,\outzFund_\nu - \trtr\zFund{\outzFund}, \labeleq{jacobiext_rnu}\end{equation*}
where we suppressed superindex $\atime$ and where the artificial quantities are defined by $2\,\outzFund*:=\AdSoutg*-\outg*$, $\outmomden*:=\outdiv(\outtr\,\outzFund\,\outg*-\outzFund*)$, and $\outzFund_\nu:=\outzFund*(\nu,\cdot)$,\pagebreak[1] see for example \cite[Prop.~3.7]{nerz2013timeevolutionofCMC} for $\uniM:=\interval*0*1\times\outM$ and $\unig:={-}\d\atime^2+\outg*$.\footnote{Actually, $\outzFund[\AdSa]*$ and $\outmomden[\AdSa]*$ are the second fundamental form and the momentum density of $\{\atime_0\}\times\outM\hookrightarrow(\uniM,\unig)$.}\pagebreak[1] In particular, we know
$\vert\hspace{.05em}\jacobiext*\rnu\vert \le C\,\exp({-}\decay\,\rad)$. Thus, Proposition~\ref{Stability} implies
\[ \Vert\deform\rnu\Vert_{\Wkp^{2,p}(\M)} \le C\,\exp((\frac2p-\frac12-\outve)\,\Hradius) \]
and therefore using Proposition~\ref{Stability} again, we get (in the notation of Definition~\ref{Canonical_partition})
\[ \vert\boost{\rnu} - \frac{\sinh^3(\Hradius)}{6\,\HmHaw}\,\sum_{i=1}^3 \eflap_i \int\eflap_i\,\jacobiext*\rnu\d\mug \vert
	\le C\,\exp((2-\decay)\,\Hradius).\]
Furthermore, Theorem~\ref{Regularity_Theorem} implies
\begin{equation*}\label{lapse_translating_part}
 \Vert\deform{\nu_i}\Vert_{\Wkp^{1,p}(\M)} \le C\,\exp((2-\decay+\frac2p)\,\Hradius),\qquad
	 \vert\Vert\nu_\oi\Vert_{\Lp^2(\M)}^2-\frac{\volume{\M}}{3}\vert \le C\,\exp((4-\decay)\,\Hradius).
\end{equation*}
Thus, we can replace $\{\eflap_i\}_{i=1}^3$ by $\{\frac{\sqrt 3}{2\,\sqrt\pi\,\sinh(\Hradius)}\,\nu_\oi\}_{\oi=1}^3$ and get
\[ \vert\boost{\rnu} - \frac{\delta^{\oi\oj}\;\nu_\oj}{8\,\pi\,\mass^0} \int\sinh(\vert\outx'\vert)\,\nu_\oi\;\jacobiext*\rnu\d\mug \vert
	\le C\,\exp({-}\outve\,\Hradius), \]
where $\outx'=\hiso\circ\outx$ are the new coordinates defined using the $\hyperbolicspace$-isometry of Theorem~\ref{Regularity_Theorem}. Thus, Definition~\ref{Definition_Mass}, Theorem~\ref{Regularity_Theorem}, and equation~\eqref{jacobiext_rnu}, imply
\[ \vert\boost{\rnu}\vert \le \vert\mass^0(\outx')\vert^{-1}\,\vert(\mass^\oi(\outx'))_{\oi=1}^3\vert_{\R^3} + C\,\exp({-}\outve\,\Hradius) \le 2, \]
where we used that $\mass$ is timelike.

If $\outx'$ are balanced coordinates, then we see that \eqref{lapse_translating_part} is (in highest order) identical to the definition of the center of mass, when we keep \eqref{jacobiext_rnu} and Theorem~\ref{Regularity_Theorem} in mind. Therefore, $\vert\boost\rnu\vert\le C\,\exp({-}\outve\Hradius)$.
\end{proof}

\begin{lemma}[\texorpdfstring{$\intervalJ=\intervalI$}{J=I}]\label{I=J}
There exist constants $\Hradius_0=\Cof{\Hradius_0}[\outve][\mass][\eta][\zeta][\c]$, ${\c}'=\Cof{{\c}'}[\outve][\mass][\eta][\zeta][\c]$, $\eta_0\in\interval*01$ such that $\intervalJ=\intervalI$ if $\Hradius>\Hradius_0$, $\c\ge{\c}'$, and $\eta=\eta_0$, \ie $\M[\atime]\in\regsphere{\c'}$ for every $\atime\in\intervalI$ if $\sigma>\sigma_0$.
\end{lemma}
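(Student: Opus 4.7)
The plan is the standard open/closed argument on the connected set $\intervalI$. Closedness of $\intervalJ$ in $\intervalI$ was already indicated via the continuous $\atime$-dependence of $\rradius$, $\Rradius$, $\volume{\M[\atime]}$, and the eigenvalues of $\jacobiext*$ (which follows from Lemma~\ref{I_is_a_neighborhood_of_J} combined with Lemma~\ref{First_estimates_on_lapse}); it remains to establish openness. Concretely, for every $\atime_0 \in \intervalJ$ the goal is to show that \ref{IntervalJ-regularity} and \ref{IntervalJ-instability} hold at $\atime_0$ with strict quantitative margin, which then forces them to persist on an $\intervalI$-neighborhood of $\atime_0$ by continuity.

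The crucial observation is that on $\intervalJ$ the hypotheses of Theorem~\ref{Regularity_Theorem} and Proposition~\ref{Stability} are satisfied by definition, so these two results upgrade \ref{IntervalJ-regularity}--\ref{IntervalJ-instability} into strictly sharper estimates. Moreover, the family $\outg[\atime] = (1-\atime)\AdSoutg + \atime\,\outg$ is uniformly $\Ck^2_{\decay,\scdecay}$-asymptotically hyperbolic in $\atime\in[0,1]$, and since the mass functional is linear in the perturbation from $\houtg$ to leading order and both endpoint metrics have mass $(\totalmass,0,0,0)$ by the balancing assumption~\eqref{balanced_assumption}, the mass vector of every $\outg[\atime]$ equals $(\totalmass,0,0,0)$; hence the constants in Theorem~\ref{Regularity_Theorem} and Proposition~\ref{Stability} depend only on the fixed data and not on $\atime_0$. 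The regularity theorem then produces a hyperbolic isometry $\hiso$ and a graph function $\graphf$ with $\hiso(\M[\atime_0]) = \hgraph\graphf$ and $\Vert\graphf\Vert_{\Wkp^{2,p}(\sphere_\Hradius(0))} \le C\,\exp((2+\tfrac{2}{p}-\decay')\Hradius)$ for some $\decay'>2$; by Sobolev embedding this gives $\Rradius, \rradius = \Hradius + o(1)$ and $\volume{\M[\atime_0]} = 4\pi\,\sinh(\Hradius)^2\,(1+o(1))$, so \ref{IntervalJ-regularity} holds with large margin once $\Hradius\ge\Hradius_0$ and $\c'$ is chosen, say, at least $8\pi$. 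Proposition~\ref{Stability} in turn shows that the mean-zero eigenvalues of the stability operator are bounded below by a positive quantity of order $\min\{6\,\totalmass/\sinh(\Hradius)^3,\,3/(2\sinh(\Hradius)^2)\}$, so that for any $\eta_0\in(0,1)$ the surface $\M[\atime_0]$ is in fact strictly stable and the condition of $-(4-\eta_0)\sinh(\Hradius)^{-2}$-controlled instability \ref{IntervalJ-instability} holds with enormous margin.

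Combining these margins with the $\Wkp^{2,p}$-continuity of $\atime\mapsto\M[\atime]$ provided by Lemma~\ref{I_is_a_neighborhood_of_J}, both defining conditions persist on an open $\intervalI$-neighborhood of every $\atime_0\in\intervalJ$. Together with the already established closedness of $\intervalJ$ and $0\in\intervalJ$, the connectedness of $\intervalI$ then yields $\intervalJ=\intervalI$. The main obstacle is ensuring that the regularity and stability constants are genuinely uniform in $\atime$---otherwise the margin obtained at $\atime_0$ could be consumed by the continuity step when one moves to nearby $\atime$. This uniformity is guaranteed by the linear-interpolation structure of $\outg[\atime]$ and by the balancing assumption~\eqref{balanced_assumption}, which together render the asymptotically-hyperbolic norms and the mass vector $\atime$-uniform.
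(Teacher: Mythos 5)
There is a genuine gap in your argument for why \ref{IntervalJ-regularity} holds with margin at $\atime_0\in\intervalJ$, and it is precisely the point the paper's proof is designed to handle.

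You claim that Theorem~\ref{Regularity_Theorem} plus Sobolev embedding gives $\Rradius,\rradius=\Hradius+o(1)$. This does not follow. The regularity theorem produces \emph{some} hyperbolic isometry $\hiso$ and shows $\hiso(\M[\atime_0])$ is a small graph over $\sphere_\Hradius(0)$; equivalently, $\M[\atime_0]$ is close to the geodesic sphere $\sphere_\Hradius(p_1)$ with $p_1=\hiso^{-1}(0)$, but it says nothing about $d(0,p_1)$. Working only with $\Rradius\le(1+\zeta)\rradius$, the drift $a:=d(0,p_1)$ is only constrained by $a\le\frac{\zeta}{2+\zeta}\Hradius$ --- a fixed \emph{fraction} of $\Hradius$, not $o(1)$. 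In that borderline regime $\Rradius/\rradius$ can be arbitrarily close to $1+\zeta$, so there is no margin, and your open/closed argument stalls because nothing prevents $a(\atime)$ from increasing as $\atime$ moves away from $\atime_0$. Your appeal to "$\atime$-uniformity of the asymptotically hyperbolic norms and the mass vector" does not bound the drift of the center of $\M[\atime]$; the uniformity you invoke is about $\outg[\atime]$, not about the isometry $\hiso[\atime]$ produced by the regularity theorem.

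The paper closes exactly this gap with the quantitative content of Lemma~\ref{First_estimates_on_lapse}: the pointwise bound $\vert\boost{\rnu[\atime]}\vert\le 2$ (which is where the timelike mass vector and the Ricci version of the mass enter), combined with the small $\deform{\rnu[\atime]}$, gives $\Vert\rnu[\atime]\Vert_{\Lp^\infty}\le 3$, so $\min_{\M[\atime]}\rad$ and $\max_{\M[\atime]}\rad$ each change by at most $3\atime\le 3$ over the whole deformation. Since $\M[0]$ is an exact coordinate sphere, $\Rradius-\rradius\le 6$ for all $\atime\in\intervalI$, and both conditions in \ref{IntervalJ-regularity} hold for $\c_*=\c+C\atime$ and any $\zeta_*>0$ once $\Hradius$ is large. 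You cite Lemma~\ref{First_estimates_on_lapse} only to justify continuity, not for this rate bound, so your proposal is missing the key estimate. (A smaller inaccuracy: strict stability of $\M[\atime_0]$ holds only when the mass vector is future-pointing; for the general timelike case one only has the $-C\sinh(\Hradius)^{-3}$-controlled instability, which is nonetheless enough for \ref{IntervalJ-instability} with margin.)
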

\begin{proof}
Fix $\atime_0\in\intervalJ$. By Prop.~\ref{Stability}, $\M[\atime]$ has ${-}C\,\sinh(\Hradius)^{-3}$-con\-trolled instability if $\Hradius$ is sufficiently large. Now due to the continuity of $\Phi$ explained above, we can assume that there is a neighborhood of $\atime_0$ in $\intervalI$ such that $\M[\atime]\in\regsphere{2\,\c}$ and $\M[\atime]$ satisfies~\ref{IntervalJ-regularity}--\ref{IntervalJ-instability} for $2\,\c$ instead of $\c$ and $\eta=\frac12$. Thus, we only have to control the \lq$\atime$-derivative\rq\ of $\c[\atime]$ and ${}^{\atime\!}\zeta$ for surfaces $\M[\atime]\in\regsphere{\c[\atime]}$ to prove the claim. Fix an artificial time $\atime_0\in\intervalJ$ and let $C$ denote any constant depending on $\outve$, $\mass$, ${}^{\atime\!}\zeta$, and $\c[\atime]$ for $\atime\relax<\atime_0$.\smallskip

By Lemma~\ref{First_estimates_on_lapse}, we know
\[ \min_{\M<0>}\rad - 3\atime
		\le \min_{\M<\atime>} \rad
		\le \max_{\M<\atime>} \rad
		\le \max_{\M<0>} \rad + 3\atime, \qquad
	\vert\vphantom{\Big|}\volume{\M<\atime>}-\volume{\M<0>}\vert \le C\,\exp((2-\outve)\Hradius) \]
and therefore $\M<\atime>$ satisfies \ref{IntervalJ-regularity} and \ref{IntervalJ-instability} for $\c_*=\c+C\,\atime$ and \emph{every} $\zeta_*>\zeta$ instead of $\c$ and $\zeta$, respectively. Note that $\zeta_*$ and $C$ are chosen independently of $\atime$. Therefore, $\intervalJ$ is open in $\intervalI$ for fixed constants $\zeta_*$, $\c_*$, $\eta=\frac12$. As we already know that $\intervalJ$ is also closed in $\intervalI$ and that $\intervalI$ is connected (as it is an interval), this proves the claim.\qedhere
\end{proof}
Now, we can finally prove that $\intervalI=\interval*0*1$. In particular, there exists a surface $\M<\Hradius>$ with constant mean curvature $\H<\Hradius>\equiv{-}2\,\frac{\cosh(\Hradius)}{\sinh(\Hradius)}$ with respect to $\outg*$.
\begin{lemma}[\texorpdfstring{$\intervalI=\interval*0*1$}{I=[0,1]}]\label{I=[0;1]}
There is a constant $\Hradius_0=\Cof{\Hradius_0}[|\mass|][\decay][\scdecay][\oc]$ such that $\intervalI=\interval*0*1$ if $\Hradius>\Hradius_0$.
\end{lemma}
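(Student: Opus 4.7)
The plan is a standard open-closed-connected argument for the continuity interval $\intervalI$. Since $0\in\intervalI$ and $\interval*0*1$ is connected, it suffices to show that $\intervalI$ is both open and closed in $\interval*0*1$. Openness is already available: by Lemma~\ref{I=J} we have $\intervalJ=\intervalI$, and by Lemma~\ref{I_is_a_neighborhood_of_J} the set $\intervalI$ is a neighborhood of $\intervalJ$ in $\interval*0*1$, so $\intervalI$ is open in $\interval*0*1$. All the real work therefore lies in proving closedness, i.e.\ that $\atime_*:=\sup\intervalI\in\intervalI$ (whence, if $\atime_*<1$, openness would contradict the maximality of $\intervalI$ and force $\atime_*=1$).

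To prove closedness, I would pick a sequence $\atime_n\in\intervalI$ with $\atime_n\nearrow\atime_*$ and construct a limit surface $\M<\atime_*>$ with the properties required by \ref{IntervalI_IT}--\ref{IntervalI_ID}. By Lemma~\ref{I=J} each $\M[\atime_n]$ lies in $\regsphere[\decay']{\c'}$ uniformly in $n$ (for a decay rate $\decay'=\frac{\decay}{1+\zeta}>\frac52$) and has ${-}(4-\eta)\sinh(\Hradius)^{-2}$-controlled instability, so Theorem~\ref{Regularity_Theorem} gives uniform $\Wkp^{2,p}$-bounds on the graph functions $\graphf[\atime_n]$ over a suitable geodesic sphere. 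Simultaneously, Lemma~\ref{First_estimates_on_lapse} yields $\Vert\rnu[\atime]\Vert_{\Wkp^{2,p}(\M[\atime])}\le C$ uniformly in $\atime\in\intervalI$, hence the map $\atime\mapsto\outx\circ\Phi(\atime,\cdot)$ is uniformly Lipschitz into $\Wkp^{2,p}(\sphere;\R^3)$. Passing to a subsequence, $\Phi(\atime_n,\cdot)$ converges in $\Wkp^{2,p}$ to some limit immersion $\Phi_*$, with $\M[\atime_*]:=\Phi_*(\sphere)$ a $\Wkp^{2,p}$-sphere.

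By continuity of the second fundamental form in $\Wkp^{2,p}$ and of the metric $\outg[\atime]$ in $\atime$, the limit $\M[\atime_*]$ has constant mean curvature $\H[\atime_*]\equiv{-}2\,\frac{\cosh(\Hradius)}{\sinh(\Hradius)}$ with respect to $\outg[\atime_*]$; the closedness of the assumptions \ref{IntervalJ-regularity}--\ref{IntervalJ-instability} (already exploited in the excerpt to conclude $\intervalJ$ is closed in $\intervalI$) ensures that $\M[\atime_*]\in\regsphere[\decay']{\c'}$ and still has ${-}(4-\eta)\sinh(\Hradius)^{-2}$-controlled instability. Extending $\Phi$ to $\intervalI\cup\{\atime_*\}$ by $\Phi(\atime_*,\cdot):=\Phi_*$ (and reparametrising so that $\partial*_\atime\Phi$ is normal to $\M[\atime]$, which is harmless by diffeomorphism-invariance of the conditions) gives a map satisfying \ref{IntervalI_IT}--\ref{IntervalI_ID} on the enlarged interval. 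By maximality of $\intervalI$ this forces $\atime_*\in\intervalI$, proving closedness.

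Combining openness, closedness, connectedness of $\interval*0*1$ and $0\in\intervalI$ yields $\intervalI=\interval*0*1$. The main obstacle is the compactness step: the fast $\Wkp^{2,p}$-convergence of $\Phi(\atime_n,\cdot)$ requires the uniform regularity of Theorem~\ref{Regularity_Theorem} (this is \emph{why} one needs $\intervalJ=\intervalI$ before attempting the compactness argument) together with the uniform lapse bound from Lemma~\ref{First_estimates_on_lapse}; with both in hand, the limit inherits the CMC property and the controlled instability simply by continuity, and $\Hradius_0$ ends up depending only on $|\mass|$, $\decay$, $\scdecay$ and $\oc$ because all of the cited bounds depend only on these quantities.
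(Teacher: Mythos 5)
Your proof is correct and takes essentially the same route the paper does: the paper's proof is simply a pointer to the analogous Lemma~3.7 of \cite{nerz2015CMCfoliation}, and the surrounding text already describes the strategy you lay out (after Lemma~\ref{I=J}: ``This implies that $\intervalI$ is open in $\interval*0*1$ and a simple convergence argument finishes the proof that $\intervalI=\interval*0*1$''). Your open-closed-connected skeleton, with openness supplied by Lemma~\ref{I_is_a_neighborhood_of_J} together with $\intervalJ=\intervalI$, and closedness supplied by the uniform $\Wkp^{2,p}$-bounds from Theorem~\ref{Regularity_Theorem} and the uniform lapse estimate from Lemma~\ref{First_estimates_on_lapse} (valid on $\intervalJ=\intervalI$), matches the intended ``simple convergence argument.'' One small point worth making explicit: after extracting the $\Wkp^{2,p}$-limit immersion $\Phi_*$ at $\atime_*$, you should note that $\Phi$ extends to a $\Ck^1$-map on $\intervalI\cup\{\atime_*\}$ (as required by the assumption that $\Phi$ be $\Ck^1$), which follows because the stability operator of $\M[\atime_*]$ is invertible by Proposition~\ref{Stability}, so the implicit function theorem around $\atime_*$ produces a $\Ck^1$ solution that coincides with $\Phi$ on $\intervalI$ by Lemma~\ref{I_is_a_neighborhood_of_J}.
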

\begin{proof}
Analogous to the one of \cite[Lemma~3.7]{nerz2015CMCfoliation}.
\end{proof}
As we use the uniqueness and regularity of the CMC-leaves in order to prove that they foliate $\outM$, let us first prove the uniqueness and regularity of these surfaces.
\begin{proof}[Proof of Theorem~\ref{Uniqueness_Theorem_full}]
First, we note that by our assumptions on $\zeta$, we can apply Theorem~\ref{Regularity_Theorem} on $\M$ with $\decay'=\frac{\decay}{1+\zeta}>\frac52$. Therefore, our assumptions on $\zeta$ imply $\vert\outric\vert_{\houtg}\le C\,\exp({-}(\frac52+\delta)\Hradius)$ and $\vert\outsc\vert\le C\,\exp({-}(3+\delta)\Hradius)$ on $\M$ for some $\delta\in\interval0*\outve$. In particular, we can apply Proposition~\ref{Stability} on $\M$.

Thus, we can repeat everything done in the current section, but replace the assumptions \ref{IntervalI_IT} and \ref{IntervalI_ID} with \lq$\intervalI\subseteq\interval*0*1$ is a interval with $1\in\intervalI$\rq\ and \lq$\M[1]=\M$ for the given surface $\M$\rq, respectively.\footnote{Here, Lemma~\ref{I=J} a~priori is only true for $\eta_0\in\interval*0{\frac{\eta+\eta^*}2}$ instead of $\eta_0\in\interval*01$, where $\eta^*:=\min\{\frac{2\decay-5}{2\decay},\frac{\scdecay-3}3\}$.}
In particular, we get that there exists a $\Ck^1$-map $\Phi:\interval*0*1\times\sphere\to\outM$ such that $\Phi(\atime,\sphere)$ is a CMC-surface with respect to $\outg[\atime]=\atime\outg+(1-\atime)\AdSoutg$. Now we note that $\Phi(0,\sphere)$ is uniquely determined by $\H(\Phi(0,\sphere))\equiv\H(\M)$ as $\outg[0]=\AdSoutg$, see \cite{brendle2013constant}. By Lemma~\ref{I_is_a_neighborhood_of_J}, this implies that $\M$ is uniquely determined by $\H(\M)$---at least within the class given in Theorem~\ref{Uniqueness_Theorem_full}. This proves the claim.
\end{proof}%
\begin{proof}[Proof of Theorem~\ref{Regularity_Theorem_full}]
Without loss of generality, we can assume that $\outx$ are balanced coordinates, \ie $\mass=(\mass*,0,0,0)$. In particular, the second case of Lemma~\ref{First_estimates_on_lapse} is valid at the artificial time $\tau=0$. Arguing as in Lemma~\ref{I=J}, we see that the same is true for all $\tau\in\interval*0*1$. Thus, Theorem~\ref{Regularity_Theorem} proves the claim, where we get the additionally estimates on $\graphf<\Hradius>'$ by integrating $\rnu<\atime>$ along~$\atime$, where we recall that we have chosen coordinates with $\outcenterz=0$.
\end{proof}

\begin{proof}[Proof of Theorem~\ref{Existence_Theorem_full}]
By Lemma~\ref{I=[0;1]}, there is a constant $\Hradius_0=\Cof{\Hradius_0}[\outve][\mass][\eta][\zeta][\c]$ and a map $\Phi : \interval*0*1\times\interval{\Hradius_0}\infty\times\sphere\to\outM$ such that $\M[\atime]<\Hradius>:=\Phi(\atime,\Hradius,\sphere)$ has constant mean curvature $\H[\atime]<\Hradius>\equiv{-}2\,\frac{\cosh(\Hradius)}{\sinh(\Hradius)}$ with respect to $\outg[\atime]*$. Furthermore, there is a constant $\c'=\Cof{\c'}[\outve][\mass][\eta][\zeta][\c]$ such that $\M[\atime]<\sigma>\in\regsphere{\c'}$ for every $\atime\in\interval*0*1$ and $\sigma>\sigma_0$ due to Lemma~\ref{I=J}. In particular, the stability operator is invertible and an argument as in Lemma~\ref{I_is_a_neighborhood_of_J} ensures that we can choose $\Phi:\interval*0*1\times\interval{\Hradius_0}\infty\to\Wkp^{2,p}(\sphere;\R^3)$ to be continuously differentiable, when we keep the uniqueness (Theorem~\ref{Uniqueness_Theorem_full}) in mind.

The only claim left to prove is the foliation property of $\Phi[\atime]:=\Phi(\atime,\cdot,\cdot)$. Let $\rnu<\Hradius>:=\outg*(\spartial[\Hradius]@\Phi,\nu)$ denote the lapse function in $\Hradius$-direction. In particular, the foliation property holds if $\Vert \rnu<\Hradius>-1\Vert_{\Lp^\infty(\M)}\to 0$ for $\Hradius\to\infty$. As in the proof of Lemma~\ref{I=J}, we know that
\[ \jacobiext*(\rnu<\Hradius>-1) = \partial[\sigma]@{\H[\atime]<\Hradius>} - \outric*(\nu<\Hradius>,\nu<\Hradius>)-\vert\zFund<\Hradius>\vert_{\g<\Hradius>*}^2 = {-} \outric*(\nu<\Hradius>,\nu<\Hradius>) - 2 - \vert\zFundtrf<\Hradius>\vert_{\g<\Hradius>*}^2. \]
By Theorem~\ref{Regularity_Theorem}, this implies $\vert\jacobiext*(\rnu-1) + \outric*(\nu<\Hradius>,\nu<\Hradius>) + 2 \vert \le C\,\exp(-2\,\decay\,\Hradius)$. In particular, we have $\vert\jacobiext*(\rnu<\Hradius>-1)\vert\le C\,\exp({-}\decay\Hradius)$ implying $\vert\deform\rnu\vert\le C\,\exp((2-\decay)\Hradius)$. And therefore---again as in Lemma~\ref{I=J}---, we get
\[ \vert \rnu - 1 - \frac{\sinh(\Hradius)}{4\pi\,\mass*}\sum_{\oi=1}^3\int\widetilde G(\nu<\Hradius>,\partial*_r)\,\frac{\outx^\oi}\rad\d\mug\;\nu_\oi \vert \le C\,\exp({-}\outve\,\Hradius),  \]
where $\widetilde G:=\outric-(\frac12\outsc-1)\outg$. By Theorem~\ref{Regularity_Theorem_full}, this implies
\[ \vert \rnu-1\vert
		= \vert\rnu-1-\frac{\mass^\oi}{\mass*}\,\nu_\oi\vert
		\le \vert \rnu - 1 - \frac{\sinh(\Hradius)}{4\pi\,\HmHaw}\int\widetilde G(\nu<\Hradius>,\partial*_r)\,\frac{\outx^\oi}\rad\d\mug\,\nu_\oi \vert + \frac C{\exp(\outve\Hradius)}
		\le \frac C{\exp(\outve\Hradius)}, \]
where we used that we have chosen balanced coordinates and Herzlich's Ricci version of the mass vector, see \cite{herzlich2015computing} and Remark~ \ref{remarks_on_mass}.\ref{Ricci_version_of_mass}. As explained above, this proves the claim.
\end{proof}

\section{Stability of the CMC-foliation under perturbation of the metric}\label{Section_Stability}
Furthermore, we get the following stability result with respect to perturbations of the metric for the CMC-foliation.\footnote{Note that the corresponding result is true for the asymptotically flat setting. This can easily be seen by analyzing the authors proof of the existence of the CMC-foliation, \cite[Thm~3.1]{nerz2015CMCfoliation}.}\nopagebreak
\begin{theorem}\label{Stability_Theorem_full}
Let $(\outM[i],\outg[i])$ be two $\Ck^2_{\decay,\scdecay}$-asymptotically hyperbolic Riemannian manifolds with decay rates $\decay\ge\frac52+\outve>\frac52$ and $\scdecay\ge3+\outve$ and denote by $\outx[i]$ the corresponding balanced coordinates. If $\vartheta>0$ is such that
\[ \vert\pushforward{\outx[1]}{\outg[1]} - \pushforward{\outx[2]}{\outg[2]}\vert_{\houtg} + \vert\houtlevi*(\pushforward{\outx[1]}{\outg[1]} - \pushforward{\outx[2]}{\outg[2]})\vert_{\houtg} \le \vartheta\,\exp({-}(2+\outve)), \]
then there exist a constant $C=\Cof[\decay][\scdecay][\outc_i]$ and a family of functions $\{\graphf<\Hradius>\}_{\Hradius}$ with $\graphf<\Hradius>\in\Wkp^{2,p}(\M<\Hradius>[1])$ such that
\[ \outx[1](\graph\graphf<\Hradius>)=\outx[2](\M<\Hradius>[2]), \qquad
		\Vert\graphf<\Hradius>\Vert_{\Wkp^{2,p}(\M<\Hradius>[1])} \le C\,\vartheta\,\exp((\frac2p-\outve)\Hradius) \qquad\qquad\forall\,\Hradius>\Hradius_0, \]
where $\M<\Hradius>[i]$ denotes the surfaces constructed in Theorem~\ref{Existence_Theorem_full} with mean curvature $\H<\Hradius>[i]\equiv{-}2\:\frac{\cosh(\Hradius)}{\sinh(\Hradius)}$ with respect to $\outg[i]$. The functions $\graphf<\Hradius>$ depend continuously on $\Hradius$, \ie $\graphf:\interval{\Hradius_0}\infty\to\Wkp^{2,p}(\sphere):\Hradius\to\graphf<\Hradius>\circ\outPhi[1](\Hradius,\,{\cdot}\,)$ is continuous, where $\outPhi[1]$ is as in Theorem~\ref{Existence_Theorem_full}.
\end{theorem}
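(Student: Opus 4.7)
The plan is to reuse the continuity-in-artificial-time argument of Section~\ref{proofmaintheorem}, but with $\outg[1]$ playing the role previously played by $\AdSoutg$. For $\atime\in\interval*0*1$, set $\outg[\atime]:=\outg[1]+\atime(\outg[2]-\outg[1])$ on $\R^3$ via the pushforwards by the two balanced charts. Because the Chru\'sciel--Herzlich mass functional is linear in $\outg*-\houtg*$ and both $\mass[1]=(\mass*[1],0,0,0)$ and $\mass[2]=(\mass*[2],0,0,0)$ are aligned with the time axis, the mass of $\outg[\atime]$ is again balanced, of the form $((1-\atime)\mass*[1]+\atime\mass*[2],0,0,0)$; provided $\vartheta$ is small---which may be assumed without loss of generality---the whole family $\{\outg[\atime]\}$ is uniformly $\Ck^2_{\decay,\scdecay}$-asymptotically hyperbolic with timelike mass. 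For every $\Hradius\ge\Hradius_0$ I then construct a $\Ck^1$-family $\Phi<\Hradius>\colon\interval*0*1\times\sphere\to\R^3$ of surfaces $\M[\atime]<\Hradius>:=\Phi<\Hradius>(\atime,\sphere)$ of $\outg[\atime]$-mean curvature ${-}2\,\frac{\cosh(\Hradius)}{\sinh(\Hradius)}$ with $\partial*_\atime\Phi<\Hradius>$ normal to $\M[\atime]<\Hradius>$ and $\M[0]<\Hradius>=\M<\Hradius>[1]$, by exactly the scheme of Lemmas~\ref{I_is_a_neighborhood_of_J}, \ref{I=J}, and~\ref{I=[0;1]}: openness of the existence interval comes from the implicit function theorem together with the invertibility of $\jacobiext[\atime]*$ from Proposition~\ref{Stability}, closedness follows from Theorem~\ref{Regularity_Theorem}, and the endpoint identification $\M[1]<\Hradius>=\M<\Hradius>[2]$ is forced by Theorem~\ref{Uniqueness_Theorem_full}.

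The core analytic step is to control the lapse $\rnu[\atime]:=\outg[\atime]*(\partial*_\atime\Phi<\Hradius>,\nu[\atime])$ of this deformation. Differentiating the CMC identity in $\atime$ yields the Jacobi equation $\jacobiext[\atime]*\rnu[\atime]=-\partial*_\atime\H[\atime]$, and the formula used in the proof of Lemma~\ref{First_estimates_on_lapse}---now with the artificial second fundamental form built from $(\outg[2]-\outg[1])/2$ in place of $(\outg*-\AdSoutg*)/2$---expresses the right-hand side pointwise in terms of $\outg[2]-\outg[1]$ and its first derivative. The hypothesis on $\vartheta$ therefore yields
\[ \Vert\jacobiext[\atime]*\rnu[\atime]\Vert_{\Lp^p(\M[\atime]<\Hradius>)} \le C\,\vartheta\,\exp((\tfrac{2}{p}-2-\outve)\Hradius). \]
Inverting this by the deformation estimate of Proposition~\ref{Stability} gives $\Vert\deform{\rnu[\atime]}\Vert_{\Wkp^{2,p}(\M[\atime]<\Hradius>)}\le C\,\vartheta\,\exp((\frac{2}{p}-\outve)\Hradius)$. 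For the boost part I plan to repeat the balanced-coordinate computation at the end of the proof of Lemma~\ref{First_estimates_on_lapse}: the $\boostLp^2$-projection of $\partial*_\atime\H[\atime]$ computes, up to errors of order $C\,\vartheta\,\exp({-}\outve\Hradius)$, the Chru\'sciel--Herzlich momentum integrals of $\outg[2]-\outg[1]$ tested against the three spatial boost vector fields, and these integrals encode precisely the spatial components of $\mass[2]-\mass[1]\in\mathrm{span}(e_0)$, which vanish by balancedness of both $\outg[i]$. Hence $\Vert\boost{\rnu[\atime]}\Vert_{\Wkp^{2,p}(\M[\atime]<\Hradius>)}\le C\,\vartheta\,\exp((\frac{2}{p}-\outve)\Hradius)$ with the same rate.

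Finally, I will write $\M[\atime]<\Hradius>$ as the $\outg[1]$-normal graph of a function $\graphf<\Hradius>^\atime$ over $\M<\Hradius>[1]$; after removing the tangential part of $\partial*_\atime\Phi<\Hradius>$ by composing with a $\Ck^1$-family of diffeomorphisms of $\sphere$---a standard step which costs nothing in the $\Wkp^{2,p}$-norm---one has $\partial*_\atime\graphf<\Hradius>^\atime=\rnu[\atime]\circ\Phi<\Hradius>$. Setting $\graphf<\Hradius>:=\graphf<\Hradius>^1$ and integrating the combined lapse bound over $\atime\in\interval*0*1$ produces the claimed $\Wkp^{2,p}(\M<\Hradius>[1])$-estimate for $\graphf<\Hradius>$, while continuity of $\Hradius\mapsto\graphf<\Hradius>\circ\outPhi[1](\Hradius,\,{\cdot}\,)$ is inherited from the joint continuity of $\Phi<\Hradius>(\atime,\,{\cdot}\,)$ in $(\atime,\Hradius)$ that the implicit-function construction provides. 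The hard part will be the boost estimate: the inverse of $\jacobiext[\atime]*$ grows like $\exp(3\Hradius)$ on $\boostLp^2(\M[\atime]<\Hradius>)$, so the crude pointwise bound on $\partial*_\atime\H[\atime]$ is nowhere near sharp, and it is essential that balancedness of \emph{both} $\outg[1]$ and $\outg[2]$ forces the spatial Chru\'sciel--Herzlich momentum of the perturbation $\outg[2]-\outg[1]$ to vanish, exactly as in the transition from \eqref{first_estimates_on_lapse} to \eqref{second_estimates_on_lapse} in Lemma~\ref{First_estimates_on_lapse}.
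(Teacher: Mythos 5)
Your proposal follows the same route as the paper's (very terse) proof: identify the pushforward metrics, form the convex combination $\outg[\atime]:=\outg[1]+\atime(\outg[2]-\outg[1])$, run the Section~\ref{proofmaintheorem} continuity-in-$\atime$ construction to interpolate between $\M<\Hradius>[1]$ and $\M<\Hradius>[2]$, bound the lapse via Lemma~\ref{First_estimates_on_lapse}, and integrate over $\atime$. The paper simply writes \lq\lq by the same arguments as in Section~\ref{proofmaintheorem}\dots\ the estimates in Lemma~\ref{First_estimates_on_lapse} prove\rq\rq\ the lapse bound; you have filled in exactly the details it leaves implicit. Your additional observations are correct and genuinely clarifying: (i) linearity of the Chru{\'s}ciel--Herzlich mass functional is what makes the whole interpolating family balanced, so \eqref{second_estimates_on_lapse} rather than \eqref{first_estimates_on_lapse} is in force at every $\atime$; (ii) the factor $\vartheta$, which the paper's displayed estimate omits, enters because $\partial_\atime\outg[\atime]=\outg[2]-\outg[1]$ is $\Oof(\vartheta\exp({-}(2+\outve)\rad))$, so the artificial momentum/constraint source in \eqref{jacobiext_rnu} carries it; (iii) the boost block is the delicate part, and balancedness of \emph{both} $\outg[i]$ is precisely what kills the spatial momentum integrals of $\outg[2]-\outg[1]$. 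Two minor caveats worth making explicit if you write this out: you should record that $\outzFund*=\frac12(\outg[1]-\outg[2])$ (the sign opposite to what you wrote, though it is irrelevant for norms), and the without-loss-of-generality smallness of $\vartheta$ deserves a sentence---it is needed so the interpolating mass $((1-\atime)\mass*[1]+\atime\mass*[2],0,0,0)$ stays timelike with a uniform lower bound, which the continuity argument requires.
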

Note that this continuity result for the CMC-foliation corresponds directly to the continuity result for the mass of the metric proven by Dahl-Sakovich \cite{dahl2015density}.
\begin{proof}[Proof of Theorem~\ref{Stability_Theorem_full}]
We identify $\pushforward{\outx[i]}\outg[i]$ with $\outg[i]$. By the same arguments as in Section~\ref{proofmaintheorem}, we get that for every $\Hradius>0$, there exists a $\Ck^1$ map $\Phi<\Hradius>:\interval*0*1\times\sphere\to\R^3$ such that $\Phi<\Hradius>(\atime,\sphere)$ has constant mean curvature $\H\equiv{-}2\:\frac{\cosh(\Hradius)}{\sinh(\Hradius)}$ with respect to $\outg[\atime]$. Now, the estimates in Lemma~\ref{First_estimates_on_lapse} prove 
\[ \Vert\outg(\partial*_{\atime}\Phi<\Hradius>,\nu<\atime>)\Vert_{\Wkp^{2,p}(\Phi(\atime,\sphere))} \le C\,\exp((\frac2p-\outve)\Hradius)\qquad\forall\,\atime\in\interval*0*1.\qedhere \]
\end{proof}
\begin{remark}
Note that we need only that the metrics are (asymptotically) equal up to the first derivative. This is due to the fact that the lapse function, \ie \eqref{jacobiext_rnu}, depends only on the first derivatives of $\outg[1]-\outg[2]$. However, we still needed that the metrics are asymptotically hyperbolic up to the second derivative to ensure that the surfaces exist.
\end{remark}
\begin{corollary}\label{Stability_Corollary_full}
Let $(\outM[n],\outg[n])$, $(\outM,\outg)$ be $\Ck^2_{\decay,\scdecay}$-asymptotically hyperbolic Riemannian manifolds with decay rates $\decay>\frac52$ and $\scdecay>3$ and uniformly bounded constants $\outc[n]$. Denote by $\outx[n]$ and $\outx$ the corresponding balanced coordinates. If
\[ \limsup_{\rad\to\infty}\exp(2\rad)\,(\vert\pushforward{\outx[n]}{\outg[n]} - \pushforward{\outx}{\outg}\vert_{\houtg} + \vert\houtlevi*(\pushforward{\outx[n]}{\outg[n]} - \pushforward{\outx}{\outg})\vert_{\houtg}) \xrightarrow{n\to\infty} 0, \]
then $\outx[n](\M<\Hradius>[n])$ converge in the $\Wkp^{k,p}$-sense to $\outx(\M<\Hradius>)$ and this convergence is uniformly in $\Hradius$, where $\M<\Hradius>[n]$ and $\M<\Hradius>$ denote the surface constructed in Theorem~\ref{Existence_Theorem_full} with mean curvature $\H<\Hradius>[n]\equiv{-}2\:\frac{\cosh(\Hradius)}{\sinh(\Hradius)}$ and $\H<\Hradius>\equiv{-}2\:\frac{\cosh(\Hradius)}{\sinh(\Hradius)}$ with respect to $\outg[n]$ and $\outg$, respectively.
\end{corollary}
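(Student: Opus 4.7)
The plan is to reduce Corollary~\ref{Stability_Corollary_full} to Theorem~\ref{Stability_Theorem_full} by constructing, for each $n$, a parameter $\vartheta_n\to0$ that controls the metric difference in the region where the CMC leaves live. First I would fix $\outve>0$ small enough that $\decay\ge\frac52+\outve$ and $\scdecay\ge3+\outve$. The uniform bound on the asymptotic constants $\outc[n]$ yields, in the overlap of the charts,
\[ \vert\pushforward{\outx[n]}{\outg[n]}-\pushforward{\outx}{\outg}\vert_{\houtg}+\vert\houtlevi(\pushforward{\outx[n]}{\outg[n]}-\pushforward{\outx}{\outg})\vert_{\houtg} \le C_0\exp(-\decay\rad) \]
uniformly in $n$. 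Multiplying by $\exp((2+\outve)\rad)$ produces a uniform bound of order $\exp(-(\decay-2-\outve)\rad)$ that tends to zero as $\rad\to\infty$ independently of $n$. Combining this uniform tail control with the hypothesis $\limsup_{\rad\to\infty}\exp(2\rad)(\cdots)\xrightarrow{n\to\infty}0$, a diagonal argument produces a sequence $R_n\to\infty$ such that
\[ \vartheta_n:=\sup_{\rad\ge R_n}\exp((2+\outve)\rad)\bigl(\vert\pushforward{\outx[n]}{\outg[n]}-\pushforward{\outx}{\outg}\vert_{\houtg}+\vert\houtlevi(\pushforward{\outx[n]}{\outg[n]}-\pushforward{\outx}{\outg})\vert_{\houtg}\bigr) \xrightarrow{n\to\infty}0. \]

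Next I would verify that Theorem~\ref{Stability_Theorem_full} applies with $\vartheta_n$ in place of $\vartheta$. The key observation is that its proof is essentially local in $\rad$: the lapse equation \eqref{jacobiext_rnu} and the bounds in Lemma~\ref{First_estimates_on_lapse} involve the metric difference only on the CMC surface $\M<\Hradius>[\atime]$, which by Theorem~\ref{Regularity_Theorem_full} is contained in $\{\rad\in[\Hradius/2,2\Hradius]\}$ for $\Hradius$ large enough. Consequently, for each $n$ with $R_n\le\Hradius_0/2$, Theorem~\ref{Stability_Theorem_full} yields graph functions $\graphf<\Hradius>$ with
\[ \Vert\graphf<\Hradius>\Vert_{\Wkp^{2,p}(\M<\Hradius>[n])}\le C\,\vartheta_n\exp\bigl((\tfrac2p-\outve)\Hradius\bigr)\qquad\forall\,\Hradius\ge\Hradius_0. \]
Choosing $p$ so large that $\frac2p<\outve$, the exponential factor is uniformly bounded in $\Hradius$; hence $\graphf<\Hradius>\to0$ in $\Wkp^{2,p}$ as $n\to\infty$, uniformly in $\Hradius$. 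The $\Wkp^{k,p}$-convergence for arbitrary $k,p$ then follows from Sobolev embedding on the leaves (whose Sobolev constants are uniformly controlled by Proposition~\ref{Intrinsic_Regularity_of_the_spheres}) combined with the higher regularity estimates in Theorem~\ref{Regularity_Theorem_full}.

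The main obstacle will be making the coupling between $R_n$, $\Hradius_0$, and $\vartheta_n$ simultaneously consistent, so that every sufficiently large $\Hradius$ is admissible for all sufficiently large $n$ while still retaining $\vartheta_n\to0$. This balancing is the technical heart of the argument; it relies on the quantitative localization $\M<\Hradius>\subseteq\{\rad\approx\Hradius\}$ provided by Theorem~\ref{Regularity_Theorem_full} and on a careful diagonal choice of the rates $R_n\to\infty$ and $\vartheta_n\to0$ dictated by the hypothesis.
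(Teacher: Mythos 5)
Your reduction to Theorem~\ref{Stability_Theorem_full} with a sequence $\vartheta_n\to0$ is exactly the paper's intent (its proof literally reads ``Exactly as the proof of Theorem~\ref{Stability_Theorem_full} for $\outg[n]$ and $\outg$''), but the way you manufacture $\vartheta_n$ has a genuine gap. You set
$\vartheta_n:=\sup_{\rad\ge R_n}\exp((2+\outve)\rad)\,\bigl(\vert\pushforward{\outx[n]}{\outg[n]}-\pushforward{\outx}{\outg}\vert_{\houtg}+\vert\houtlevi(\cdots)\vert_{\houtg}\bigr)$
with $R_n\to\infty$. Because of the uniform bound $\outc[n]\le\outc$ the integrand decays like $\exp(-\decay\rad)$ with $\decay\ge\frac52+\outve$, so $\vartheta_n\le\outc\exp(-\tfrac12 R_n)\to0$ automatically, \emph{without} using the hypothesis at all. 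This tells you that the quantity you named $\vartheta_n$ cannot be the constant you need: it controls the metric difference only on the shrinking region $\rad\ge R_n$, whereas Theorem~\ref{Stability_Theorem_full} requires a single bound of the form $\vartheta_n\exp({-}(2+\outve)\rad)$ valid on the whole fixed region $\rad\ge\Hradius_0$ where all leaves $\M<\Hradius>$ with $\Hradius>\Hradius_0$ live. For $\Hradius\in[\Hradius_0,R_n]$ your $\vartheta_n$ gives nothing; this is precisely the ``coupling between $R_n$, $\Hradius_0$, $\vartheta_n$'' you flag at the end as the main obstacle, and as written that obstacle is not resolved.

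The repair is a two-region split rather than a diagonal: take $\vartheta_n:=\sup_{\rad\ge\Hradius_0}\exp((2+\outve)\rad)(\cdots)$ and show $\vartheta_n\to0$. For any $R>\Hradius_0$, split the supremum. On $\rad\ge R$, the uniform $\outc[n]$-bound gives $\exp((2+\outve)\rad)(\cdots)\le C\exp(-(\decay-2-\outve)R)$, small once $R$ is fixed large. On $\Hradius_0\le\rad\le R$, write $\exp((2+\outve)\rad)(\cdots)\le\exp(\outve R)\,\exp(2\rad)(\cdots)$; the factor $\exp(\outve R)$ is a fixed constant, and the hypothesis (read, as intended, as uniform $\rad$-convergence of the $\exp(2\rad)$-weighted difference to zero rather than literally as a pointwise $\limsup_{\rad\to\infty}$, which is vacuously zero under the uniform $\outc[n]$ bound) makes this term as small as you like once $n$ is large. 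First fix $R$, then send $n\to\infty$, then send $R\to\infty$; this yields $\vartheta_n\to0$, and Theorem~\ref{Stability_Theorem_full} then gives the graph functions $\graphf<\Hradius>$ with $\Vert\graphf<\Hradius>\Vert_{\Wkp^{2,p}}\le C\vartheta_n\exp((\tfrac2p-\outve)\Hradius)$, a bound that is indeed uniform in $\Hradius\ge\Hradius_0$ and tends to zero in $n$. Your closing step (choose $p$ large, then upgrade to $\Wkp^{k,p}$ by Sobolev embedding and Theorem~\ref{Regularity_Theorem_full}) is fine once the $\vartheta_n$ is constructed correctly.
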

\begin{proof}
Exactly as the proof of Theorem~\ref{Stability_Theorem_full} for $\outg[n]$ and $\outg$.
\end{proof}

\begin{remark}[A comment on the evolution in time]\label{EvolutionInTime}
In \cite{nerz2013timeevolutionofCMC}, the author proved the following: If a family $(\outM,\outg[t],\outzFund[t])$ of \Ckae^2 initial data sets evolves in time according to the Einstein equations (with lapse function $\ralpha\approx1$ and shift vector $\vec{\bar\beta}\approx\vec0$), then their CMC-foliations evolve in time by translating by the quotient of the linear momentum $\impuls$ and the total mass $\mass$. In particular, the Euclidean coordinate center $\centerz[t]<\Hradius>$ (with respect to a suitable \Ckae coordinate system) of the CMC-leaf $\M[t]<\Hradius>$ of fixed mean curvature $\H[t]<\Hradius>\equiv{-}\frac2\Hradius$ with respect to $\outg[t]$ satisfies $\partial*_t{(\mass\'\centerz[t]<\Hradius>)}=\impuls+\mathcal O(\Hradius^{{-}\outve})$ as motivated by their Newtonian counterparts. By the time it was already proven, see \cite{szabados2006poincare}, that the same is true at infinity, \ie if the total center of mass $\outcenterz[t]$ is well-defined\footnote{We recall that this is a non-trivial assumption in the asymptotically Euclidean setting.}, then it satisfies $\partial*_t{(\mass*\,\outcenterz[t])}=\impuls$.

In \cite{cederbaum2015center}, Cederbaum-Cortier-Sakovich proved the analogous result (at infinity) for asymptotically hyperbolic manifolds: for a given family $(\outM,\outg[t],\outzFund[t])$ of \Ckah^2 initial data sets evolving in time accordinate to the Einstein equations (with lapse function $\ralpha=\cosh(\rad)+\Oof(\exp({-}(\frac12+\outve)\rad))$ and shift vector $\vec{\bar\beta}=\Oof(\exp({-}(3+\outve)\rad))$) the hyperbolic center of mass $\outcenterz[t]$ of $(\outM,\outg[t])$ satisfies \lq$\partial*_t{(\mass[t]\outcenterz[t])}=\impuls$\rq,\footnote{More precisely, they proved $\partial*_t[I(\mass[t]*\,\outcenterz[t])]=\impuls$ for the natural embedding $I:\hyperbolicspace\to\R^{3,1}$, see Definition~\ref{center}} where $\impuls$ denotes the linear momentum defined in the cited article.\footnote{Actually, the quantity used in \cite{cederbaum2015center} as linear momentum was previously used in \cite{chrusciel2006rigid} but interpreted differently. We refer to \cite[Sect.~4\&5]{cederbaum2015center} for the details.} . Put differently, we can say that the linear momentum uniquely determines boosts $\hiso[t]:\hyperbolicspace\to\hyperbolicspace$ such that that the center of mass of $(\outM,\outg[t])$ with respect to $\hiso[t]\circ\outx$ is time-independent. Applying Theorem~\ref{Stability_Theorem_full}, this implies that the evolution of the CMC-leaf $\M[t]<\Hradius>$ of $(\outM,\outg[t])$ is (up to an error of order $\exp({-}\outve\Hradius)$) given by this boost, \ie by the linear momentum.
\end{remark}

\appendix
\section{A hyperbolic \texorpdfstring{$\Wkp^{2,p}$}{W2p}-regularity}\label{Section-W2pSurfaceRegularity}
We recall that the second fundamental form of a hypersurface in the Euclidean (or hyperbolic) space can interpreted as the derivative of the Gau\ss\ map. Therefore, it is quite obvious that a sufficient control of the second fundamental form gives a control of the shape of the surface. In \cite{DeLellisMueller_OptimalRigidityEstimates}, De~Lellis-M\"uller proved exactly this in the Euclidean space: 
\begin{theorem}[{\cite{DeLellisMueller_OptimalRigidityEstimates}}]
If $\M\hookrightarrow\R^3$ is a smooth, compact, connected hypersurface without boundary, then
\[ \Vert \eukzFund - \frac1r\,\eukg\Vert_{\Lp^2(\M)} \le c_U\,\Vert\eukzFundtrf\Vert_{\Lp^2(\M)}, \]
where $r:=\eukAradius:=\sqrt{(4\pi)^{-1}\,\eukvolume{\M}}$ and where $c_U$ is an universal constant. If additionally $\Vert\eukzFundtrf\Vert_{\Lp^2(\M)}\le8\pi$, then there exists a conformal parametrization $\psi:\euksphere_r(p)\to\M$ such that
\[ \Vert\psi-\id\Vert_{\Hk^2(\euksphere_r(p))} \le c_U\eukvolume{\M}\Vert\eukzFundtrf\Vert_{\Lp^2(\M)}, \]
where $p\in\R^3$ is some point and where $\id$ is the natural embedding of $\euksphere_r(p)\hookrightarrow\R^3$.
\end{theorem}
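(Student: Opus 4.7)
The plan is to reduce the first inequality to an $\Lp^{2}$-bound on the mean curvature via a pointwise algebraic identity, deduce that bound from Gau\ss-Bonnet combined with the Gau\ss\ equation, and then obtain the conformal parametrization by uniformization together with elliptic regularity for the conformal factor.

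\textbf{Step 1 (from the tracefree part to the full second fundamental form).} Starting from the decomposition $\eukzFund=\eukzFundtrf+\tfrac12\eukH\,\eukg$, the pointwise identity
\[
 \bigl|\eukzFund-\tfrac{1}{r}\eukg\bigr|_{\eukg}^{2}
 = |\eukzFundtrf|_{\eukg}^{2} + \tfrac{1}{2}\bigl(\eukH-\tfrac{2}{r}\bigr)^{2}
\]
reduces the first inequality to controlling $\|\eukH-2/r\|_{\Lp^{2}(\M)}$ by $\|\eukzFundtrf\|_{\Lp^{2}(\M)}$. The Gau\ss-Bonnet theorem $\int_{\M}K\,\d\eukmug=4\pi$ combined with the codimension-one Gau\ss\ equation in dimension two, $K=\eukH^{2}/4-|\eukzFundtrf|_{\eukg}^{2}/2$, gives
\[
 \int_{\M}\eukH^{2}\,\d\eukmug = 16\pi + 2\int_{\M}|\eukzFundtrf|_{\eukg}^{2}\,\d\eukmug.
\]
Together with $\eukvolume{\M}=4\pi r^{2}$ and a Cauchy--Schwarz estimate for the mean value of $\eukH$, expanding $\int_{\M}(\eukH-2/r)^{2}$ yields the desired universal bound.

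\textbf{Step 2 (conformal parametrization and bound on the conformal factor).} By uniformization, there exists a conformal diffeomorphism $\psi\colon\euksphere_{r}(p)\to\M$ for some $p\in\R^{3}$ to be fixed in Step~3, with $\psi^{\ast}\eukg=\exp(2v)\sphg*$. The factor $v$ solves a prescribed Gau\ss\ curvature equation of the schematic form $-\sphlaplace v+1=r^{2}K(\psi)\exp(2v)$, and Step~1 together with the Gau\ss\ equation gives $\|r^{2}K-1\|_{\Lp^{1}(\M)}\le c_{U}\|\eukzFundtrf\|_{\Lp^{2}}^{2}$. The threshold $\|\eukzFundtrf\|_{\Lp^{2}}\le 8\pi$ is precisely the subcritical regime for Moser--Trudinger on $\euksphere_{r}(p)$; Moser iteration then produces $\Lp^{\infty}$-control of $v$ and elliptic bootstrap upgrades this to $\|v\|_{\Hk^{2}(\euksphere_{r}(p))}\le c_{U}\eukvolume{\M}\,\|\eukzFundtrf\|_{\Lp^{2}}$.

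\textbf{Step 3 (centering and the $\Hk^{2}$-bound on $\psi-\id$).} To promote the $\Hk^{2}$-estimate on $v$ into the claimed $\Hk^{2}$-estimate on $\psi-\id$, I would fix $p\in\R^{3}$ as the Euclidean mean of $\psi$, and then exploit the Codazzi identity $\div\eukzFund=\d\eukH$ together with the Gau\ss\ formula $\partial_{i}\partial_{j}\psi=\Gamma^{k}_{ij}\partial_{k}\psi+\eukzFund_{ij}\,\euknu$ to control the second derivatives of $\psi-\id$ in $\Lp^{2}$ by $\|\eukzFundtrf\|_{\Lp^{2}}$ and the bounds already obtained for $v$.

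\textbf{Main obstacle.} The delicate point is Step~3: the $\Hk^{2}$-bound on $\psi-\id$ cannot be made universal unless $p$ is chosen so that $\psi-p$ is $\Lp^{2}(\M)$-orthogonal to the three-dimensional kernel of the linearized mean curvature operator on $\euksphere_{r}$, namely the span of the Euclidean coordinate functions. Arranging this orthogonalization while keeping the constant $c_{U}$ independent of $r$ and of the particular surface $\M$ is what makes the argument nontrivial.
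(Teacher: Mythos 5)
The paper does not contain a proof of this statement: it is quoted verbatim from De~Lellis--M\"uller \cite{DeLellisMueller_OptimalRigidityEstimates} and is then immediately used as an external tool (indeed the paper goes on to remark that De~Lellis--M\"uller's own counterexamples show its hypotheses are sharp). So there is no paper argument to compare against; the only thing to assess is whether your attempt is correct, and it is not.

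Your Step~1 algebra is fine: using $\eukzFund=\eukzFundtrf+\tfrac12\eukH\,\eukg$ and $|\eukg|^2=2$ gives $|\eukzFund-\tfrac1r\eukg|^2=|\eukzFundtrf|^2+\tfrac12(\eukH-\tfrac2r)^2$, and Gau\ss--Bonnet plus the Gau\ss\ equation gives $\int\eukH^2=16\pi+2\|\eukzFundtrf\|_{\Lp^2}^2$. Expanding and using $\volume\M=4\pi r^2$ then reduces the first inequality to a \emph{lower} bound $\int\eukH\ge 8\pi r-C\,r\,\|\eukzFundtrf\|_{\Lp^2}^2$. This is where the argument breaks: Cauchy--Schwarz gives an \emph{upper} bound on $|\int\eukH|$, not a lower one, and the needed lower bound is a quantitative stability statement for the Minkowski-type inequality $(\int\eukH)^2\ge 16\pi\volume\M$ — itself only true a priori for convex bodies, and whose stability is essentially as hard as the theorem you are trying to prove. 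De~Lellis--M\"uller do not go through Gau\ss--Bonnet plus Cauchy--Schwarz; they work with the Gau\ss\ map $N:\M\to\sphere$, use that $N$ is nearly conformal when $\eukzFundtrf$ is small, and exploit compensated compactness (a Wente-type inequality) together with the degree-one structure of $N$. Step~2 misattributes the threshold $8\pi$: the sharp Moser--Trudinger constant on $\sphere$ is $4\pi$, and the $8\pi$ here is the Li--Yau/Willmore bound $\tfrac14\int\eukH^2<8\pi$ ensuring embeddedness, which again enters through Wente-type estimates rather than a straightforward Moser iteration. Step~3 correctly flags the orthogonality obstruction against the three first spherical harmonics, but fixing $p$ to be the Euclidean mean of $\psi$ does not by itself produce the claimed universal constant; the actual normalization and the passage from a bound on the conformal factor to the $\Hk^2$-bound on $\psi-\id$ occupy a substantial part of \cite{DeLellisMueller_OptimalRigidityEstimates}. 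In short, the proposal sketches a plausible-looking elementary route, but the core estimate in Step~1 does not follow from the tools invoked.
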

By giving explicit counterexamples, De~Lellis-M\"uller furthermore proved that this theorem is optimal, \ie their assumptions cannot be weakened. When we assume stricter assumption on $\zFundtrf$ (\eg pointwise smallness), then it is relatively easy to use De~Lellis-M\"uller's result to conclude that $\M$ is even an Euclidean graph over the Euclidean sphere $\euksphere_r(p)$.\smallskip

Neves-Tian used the Poincar\'e ball model $\outy:\hyperbolicspace\to\eukball^3_1(0)$ and the conformal invariance of $\Vert\zFundtrf\Vert_{\Lp^2(\M)}$ to apply De~Lellis-M\"uller's theorem for a surface $\M$ with small \emph{hyperbolic} tracefree second fundamental form $\hzFundtrf$, \ie to apply it in the hyperbolic setting. However, they needed to additionally prove an estimate on the \emph{Euclidean} (with respect to the Poincar\'e model) surface area $\eukvolume{\outy(\M)}$ of $\M$ where they had to use additional assumptions on $\M$ (besides smallness of $\hzFundtrf$). As mentioned above, they concluded that $\outy(\M)$ is in fact a \emph{Euclidean} graph over an \emph{Euclidean} sphere and got strict estimates on the graph function---both by using pointwise smallness of $\hzFundtrf$. But to conclude that it is also a \emph{hyperbolic} graph over an \emph{hyperbolic} sphere and to get estimates on the \emph{hyperbolic} graph function, they had again to use additional assumptions on $\M$ being very strict assumption on the minimal $\rradius$ and maximal $\Rradius$ (hyperbolic) geodesic distance to the origin. More precisely, $\Rradius-\rradius=\Oof(\rradius^0)$ was assumed to be very small (compared to $\rradius\gg1$).

In this section, we choose a different approach to prove that a given hypersurface is a hyperbolic graph over a hyperbolic sphere, where we directly use the strong assumptions satisfied by the surfaces $\M$ which we study: they are large $\volume{\M}\gg1$, have (pointwise) almost constant mean curvature $\hH(\M)\approx \hH(\hsphere_\Hradius(0))$, are (pointwise) almost umbilic $\hzFundtrf\approx0$, and satisfy the estimate $\frac{3-\eta'}3\Rradius\le\Hradius\le(1+\eta')\rradius$ on the minimal $\rradius$ and maximal $\rradius$ (hyperbolic) geodesic distance to the origin as well as the mean curvature radius $\Hradius$ defined using the mean curvature, where $\eta'\in\interval*01$. The latter can be expressed without using coordinates by the assumption $\M\subseteq \hball^2_{(1+\zeta)\Hradius}(p_0)\setminus \hball^2_{(1-\zeta)\Hradius}(p_0)\subseteq\hyperbolicspace$, where $\zeta\in\interval*0{\frac12}$ is a constant and $\hball^2_r(p_0)$ denotes the hyperbolic geodesic ball of radius $r$ around some point $p_0$.

Obviously, this hyperbolic result is by far not as strong as the result by De~Lellis-M\"uller as we only look at large spheres, assume pointwise inequalities as well as additional inequalities on $\H$ and on the minimal and maximal distance to some point. In particular in contrast to De Lellis-M\"uller's result, the theorem presented here is \emph{not} optimal in the sense that it should also hold if we remove some of the assumptions. 

\begin{theorem}\label{W2pSurfaceRegularity}
For all constants $\outve\in\interval0*1$, $\c>0$, $\zeta\in\interval*0{\frac12}$, and $p\in\interval1\infty$, there exist constants $\Hradius_0=\Cof{\Hradius_0}[\outve][\zeta][\c][p]$ and $C=\Cof[\outve][\zeta][\c][p]$ with the following property:

Let $\Hradius>\Hradius_0$ be a constant, $\M\hookrightarrow\R^3$ be a closed hypersurface in the three-dimen\-sional hyperbolic space, and $p_0\in\R^3$ be a point in the inside of $\M$. If
\begin{equation*}
 \Vert\frac{\hzFund}2 + \frac{\cosh(\Hradius)}{\sinh(\Hradius)}\hg\Vert_{\Lp^\infty(\M)}
	\le \c\,\exp({-}(1+\outve)\Hradius), \ \;
 \Vert\frac\hH2+ \frac{\cosh(\Hradius)}{\sinh(\Hradius)}\Vert_{\Lp^p(\M)}	\le C\,\hvolume{\M}^{\frac1p}\exp({-}(2+\outve)\Hradius),
\end{equation*}
and $\M\subseteq\hball^2_{(1+\zeta)\Hradius}(p_0)\setminus\hball^2_{(1-\zeta)\Hradius}(p_0)$, then there exists a point $p_1$ and a function $\graphf\in\Wkp^{2,p}(\sphere^{\hyperbolich}_\Hradius(p_1))$ with $\M=\hgraph\graphf$ and
\[ \Vert\graphf\Vert_{\Wkp^{2,p}(\sphere^{\hyperbolich}_\Hradius(p_1),\sphg)}
		+ \exp((2-\frac2p)\Hradius)\,\Vert\hzFundtrf\Vert_{\Lp^p(\M)}
		+ \exp(2\Hradius)\,\Vert\vphantom{\big|} \houtlevi\,\vphantom{d}^\hyperbolich\hspace{-.08em}\bar d_{p_1}-\hnu\Vert_{\Lp^\infty(\M)}^2 
	\le C\,\exp({-}\outve\Hradius), \]
where $\hnu$ denotes the hyperbolic outer unit normal of $\M$ and where $\sphg=\sinh(\Hradius)^{-2}\hg$ denotes the rescaled metric on $\sphere_\Hradius(p_1)$.
\end{theorem}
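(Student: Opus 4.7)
The plan is to proceed in three stages: first, use the pointwise almost-umbilic condition together with the annular containment $\M\subseteq\hball^2_{(1+\zeta)\Hradius}(p_0)\setminus\hball^2_{(1-\zeta)\Hradius}(p_0)$ to identify a candidate center $p_1$ and realize $\M$ as a small $C^1$-graph over $\hsphere_\Hradius(p_1)$; second, translate the almost-constant mean curvature condition into a quasilinear elliptic PDE for the graph function $\graphf$ whose linearization at $\graphf\equiv 0$ is essentially the stability operator of the hyperbolic coordinate sphere; third, select $p_1$ so that $\graphf$ is orthogonal to the three-dimensional kernel of this operator, at which point standard $W^{2,p}$-elliptic regularity closes the estimate.

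\textbf{Stage 1 (graph structure).} The assumption $\hzFund\approx -\coth(\Hradius)\hg$ in $L^\infty$ says the shape operator of $\M$ is pointwise close to that of a hyperbolic sphere of radius $\Hradius$. Integrating the structural equations of the immersion along $\hg$-geodesics in $\M$ (using the Weingarten formula $\houtlevi_X\hnu=-\trzd{\hzFund}{X}$) shows that $\hnu$ varies slowly and that $\M$ is contained in a tubular neighborhood of some hyperbolic sphere of radius $\Hradius$ whose center lies within $O(\exp(-\outve\Hradius))$ of $p_0$; call this provisional center $p_1$. Together with the annular containment, this forces $\M$ to meet every geodesic ray from $p_1$ transversally in exactly one point, so $\M=\hgraph\graphf$ with $\graphf\in C^1(\hsphere_\Hradius(p_1))$ satisfying an a~priori bound $\|\graphf\|_{L^\infty}\le C\,\exp(-\outve\Hradius)$ and $\|\houtlevi\vphantom d^{\hyperbolich}\!\bar d_{p_1}-\hnu\|_{L^\infty}^2\le C\exp(-(2+\outve)\Hradius)$.

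\textbf{Stage 2 (PDE for the graph function).} Writing the mean curvature of the graph $\hgraph\graphf$ over $\hsphere_\Hradius(p_1)$, one obtains a quasilinear expression
\[
 \hH(\graphf) = F\bigl(\graphf,\sphlevi\graphf,\sphHess\graphf\bigr),
\]
which, linearized at $\graphf\equiv 0$ and scaled to the Euclidean unit sphere, becomes (up to lower order)
\[
 \sinh(\Hradius)^{2}\bigl(\hH(\graphf)+2\coth(\Hradius)\bigr)\;=\;\sphlaplace\graphf+2\graphf+\mathcal Q(\graphf,\sphlevi\graphf,\sphHess\graphf),
\]
where $\mathcal Q$ collects quadratic-and-higher terms in $\graphf$ whose $L^p$-norms are dominated by $\|\graphf\|_{L^\infty}\|\graphf\|_{W^{2,p}}$. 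This is the hyperbolic analog of the equation $\sphlaplace f=1-\exp(2f)$ referenced earlier in the paper. The hypothesis on $\hH$ gives $\|\sphlaplace\graphf+2\graphf\|_{L^p(\sphere)}\le C\exp(-\outve\Hradius)$ to leading order.

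\textbf{Stage 3 (killing the kernel and closing the estimate).} The operator $\sphlaplace+2$ on $(\sphere,\sphg)$ has kernel spanned exactly by the three coordinate functions $x^1,x^2,x^3$, which correspond to infinitesimal motions of the center $p_1$ along the three spatial boost/translation directions of hyperbolic space. The hard part is to choose $p_1$ so that the $L^2(\sphere)$-projection of $\graphf$ onto this kernel vanishes. I would set this up as a fixed-point problem: the map sending a candidate center $p$ to the triple of first spherical harmonic coefficients of the corresponding graph function $\graphf^{(p)}$ is continuously differentiable in $p$; the derivative at the correct $p_1$ is, to leading order, a nondegenerate $3\times 3$ matrix (shifting $p$ in direction $e_\oi$ changes $\graphf^{(p)}$ by $-\langle e_\oi,\omega\rangle$ plus error), so an inverse function argument (or Brouwer degree) applied inside the ball $\{|p-p_0|\le C\exp(-\outve\Hradius)\}$ produces a unique $p_1$ killing the three boost modes. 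With the kernel gone, invertibility of $\sphlaplace+2$ on the orthogonal complement and standard elliptic $W^{2,p}$-theory on $\sphere$ yield
\[
 \|\graphf\|_{W^{2,p}(\sphere_\Hradius(p_1),\sphg)}\le C\exp(-\outve\Hradius),
\]
absorbing the quadratic term $\mathcal Q$ using the a~priori $L^\infty$-smallness of $\graphf$. The tracefree estimate $\|\hzFundtrf\|_{L^p}$ is then obtained by differentiating the graph formula twice, using the $W^{2,p}$ bound on $\graphf$ and the Simons-type algebraic identity relating $\hzFundtrf$ to $\sphHess\graphf-\tfrac12\sphlaplace\graphf\cdot\sphg$ (which has the correct scaling to absorb the volume factor in the prefactor $\exp((2-2/p)\Hradius)$).

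The principal obstacle is Stage~3: the linearized operator has a nontrivial kernel and the only way to eliminate it is to move the center, but the center itself is the free parameter we are trying to determine. The argument works because \emph{infinitesimal} shifts of $p_1$ act on the kernel by a nondegenerate linear map, so the implicit function theorem suffices; without the annular containment hypothesis (which fixes $p_1$ within a compact region) this degeneracy would not be controllable, and without the pointwise almost-umbilic hypothesis the a~priori $L^\infty$ control of $\graphf$ needed to absorb $\mathcal Q$ would be unavailable.
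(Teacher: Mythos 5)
There is a genuine gap in Stage~1, and it propagates through the rest of the argument. You claim that the annular containment plus the pointwise almost-umbilic condition already yield $\|\graphf\|_{L^\infty}\le C\exp(-\outve\Hradius)$. This is not justified: the annular hypothesis only gives $|\graphf|\le\zeta\Hradius$, and the almost-umbilic condition controls the gradient of $\graphf$ (in the paper this is the bound $f_X=|\nabla r^\top|^2\lesssim\exp(-(1+\outve')\Hradius)$ obtained from a differential inequality along integral curves of $\nabla r^\top$), but it does \emph{not} control the amplitude of $\graphf$. A surface with shape operator pointwise close to $-\coth(\Hradius)\mathrm{id}$ and small gradient can still be eccentric at the scale $\zeta\Hradius$ — this is precisely the non-compactness of the problem.

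Because that $L^\infty$ bound is unavailable, Stage~3 as written cannot close. The equation you derive, $\sphlaplace\graphf+2\graphf+\mathcal Q=\text{small}$, has the nonlinear model $\sphlaplace f=1-\exp(2f)$, whose solution space on $\sphere^2$ is the non-compact family of conformal factors parametrized by conformal diffeomorphisms of $\sphere^2$. Projecting $\graphf$ onto the span of $\{x^1,x^2,x^3\}$ and killing those three \emph{linear} coefficients by an implicit-function argument does not force $\graphf$ to be small, because the large solutions of $\sphlaplace f=1-\exp(2f)$ are not small perturbations of $0$: they are radially symmetric in a rotated frame and can have any $L^\infty$ norm while their $L^2$-projection onto first spherical harmonics can be made arbitrary. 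More concretely, the quantity one must kill to control $\|\graphf\|_{L^\infty}$ is not $\int\graphf\,x^i\d\sphmug$ but the nonlinear \emph{pseudo-center} $Z^i(\Omega)=\int_\Omega\frac{x^i}{|x|}\d\houtmug\approx\frac12\int(\sinh\graphft\cosh\graphft-\graphft)p^i\d\sphmug$, and the paper's Lemma~\ref{lambda_control_by_ux_i_control} gives the explicit formula $\exp(\|f\|_{L^\infty})=\theta^{-1}(K(f))$ relating the $L^\infty$ norm of the approximating constant-Gauss-curvature solution to exactly this nonlinear functional. Your proposal lacks a substitute for this: the isometries $\hiso_t$ constructed in Lemma~\ref{pseudo-center} make $|Z(\hiso_t(\Omega))|$ globally monotone decreasing (a continuity argument, not an implicit function theorem near a tiny ball), and only after this normalization does one obtain $\|\graphf\|_{L^\infty}\lesssim\exp(-\outve'\Hradius)$ and is then entitled to absorb $\mathcal Q$. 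Stages~1 and~2 are otherwise in the right spirit and broadly parallel the paper's Steps~1--2; the missing idea is the identification of the integral $\int\exp(2\graphf)\frac{x^i}{r}\d\mug$ with a geometric pseudo-center of the enclosed region and the global isometry-selection that kills it.
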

Here, $\hball^2_r(p)$ and $\hsphere_r(p)$ denote the $\houtg*$-geodesic ball and the $\houtg*$-geodesic sphere of radius $r$ around $p\in\hyperbolicspace$, respectively. Furthermore, the $\houtg*$-geodesic graph of a function $\graphf\in\Ck^0(\sphere_r(p))$ is defined by $\hgraph\graphf := \lbrace\vphantom{e}^{\hyperbolich}\hspace{-.05em}\text{\normalfont exp}_p(\graphf(p)\,\hnu<r>_p) : p\in\hsphere_r(p)\rbrace$, where $\hnu<r>$ is the outer unit normal of $\hsphere_r(p)$ and $\vphantom{e}^{\hyperbolich}\hspace{-.05em}\text{\normalfont exp}$ denotes the exponential map---both with respect to $\houtg*$. Note that the assumption on $\zFundtrf$ implies that the assumption on $\hH$ is also satisfied pointwise, \ie for $p=\infty$, but only for the reduce the decay rate, \ie for $C\,\exp({-}(1+\outve))$ instead of $C\,\exp({-}(2+\outve))$.

\begin{remark}[Characterizing the isometry]\label{CharacterizingTheIsometry}
A direct analyze of the proof shows that the hyperbolic distance of $p_0$ and $p_1$ is at most of order $\exp({-}2\Hradius)\,\max_i\vert\int_{\Omega}\frac{\outx^i}{\rad}\d\houtmug\vert$. Furthermore, $p_1$ can be chosen such that if $\hiso:\R^3\to\R^3$ is an isometry of the hyperbolic space with $\hiso(0)=p_1$, then
\begin{equation*}\labeleq{pseudo_center_prenote}
 \frac12\int_{\sphere}({\vphantom{\Big|}}{\sinh}(\graphf)\cosh(\graphf)-\graphf)p^i\d\sphmug = \int_\Omega \frac{\hiso^i\circ\outx}{\vert\hiso\circ\outx\vert}\d\houtmug = 0, \]
where $\Omega$ is the interior of $\M$.
\end{remark}
\begin{remark}[On the proof]
The proof of this theorem contains three steps. The first step is 
to prove that $\M$ is a graph over the coordinate sphere and to obtain first (weak) inequalities for the graph function. This step is a modification of the author's previous proof of a similar result in the Euclidean setting, \cite[Cor.~E.1]{nerz2015CMCfoliation}. As several details have to be changed, we nevertheless demonstrate this step in full detail.

In the second step, we prove that the graph function $\graphf<\Hradius>$ of $\M<\Hradius>$ satisfies $\sphlaplace(\graphf<\Hradius>-\Hradius)\approx 1-\exp(2(\graphf<\Hradius>-\Hradius))$ (in $\Lp^p$). This means that the metric $\exp(2(\graphf<\Hradius>-\Hradius))\,\sphg*$ has ($\Lp^p$-)approx\-imately constant Gau\ss\ curvature. Note that an analogous result was proven by Neves-Tian in \cite[Thm~6.1]{NevesTianExistenceCMC_I} in their setting.
It is well-known that the space of solutions to
\begin{equation*} \tag{\ref{const-sc-eq}} \sphlaplace f=1-\exp(2f) \end{equation*}
is non-compact and therefore we cannot get the needed estimates on $\solutionf<\Hradius>\approx\graphf<\Hradius>-\Hradius$ solely using this partial differential equation. Neves-Tian solved that problem by proving that the functions $\solutionf<\Hradius>\approx\outx-\Hradius$ on the CMC-surfaces $\M<\Hradius>$ can not concentrate at one point if we choose balanced coordinates, \ie they showed
\[ \limsup_{\Hradius\to\infty}\int_{\sphball_r^2(p)}\exp({-}2\,\solutionf<\Hradius>)\d\sphmug =: C(r,p) \xrightarrow{r\to\infty} 0 \qquad\forall\,p\in\sphere, \]
if $\mass=(\mass*,0,0,0)$, where we suppressed the parametrizations $\varphi<\Hradius>:\sphere\to\M<\Hradius>$ and where $\sphball_r^2(p)$ denotes the geodesic ball of radius $r$ in $\sphere$ around $p\in\sphere$. Thus $\solutionf<\Hradius>$ is bounded in $\Hk^2$ independently of $\Hradius$ as every $\Hk^2$-unbounded sequence $\solutionf<\Hradius>$ of solutions of \eqref{const-sc-eq} would \lq concentrate\rq\ at one point, \ie the above $C(r,p)$ would satisfy $C(r,p)\ge\delta>0$ independently of $r$ (for some $p\in\sphere$), see \cite[Thm~1]{chen1991}.
However, Neves-Tian's argument crucially uses the fact that $\outg$ is of the form they assumed it to be, \ie that it satisfies \eqref{Ass_NevesTian_I}---more precisely that $\outric$ is of the form \eqref{Ass_Ricci} with non-vanishing $\mass*$-term.

As we look at the hyperbolic space\footnote{or at spaces asymptotic to the hyperbolic space with insufficient decay rate in order to apply a version of Neves-Tian's proof}, \ie the mass is vanishing, we cannot use Neves-Tian's method to prove that $\exp({-}2\graphf)$ does not concentrate at one point---as this is possible. Therefore, we have to change the argument to conclude an estimate for $f\approx\graphf-\Hradius$. We recall that the metrics $\exp(2f)\,\sphg*$ mentioned above have constant Gau\ss\ curvature. Thus, they are pullbacks of the standard metric of the Euclidean sphere $\sphg$ along conformal diffeomorphisms. Thus, each solution $f$ of \eqref{const-sc-eq} is characterized by a conformal diffeomorphism $\varphi(f)$ of the Euclidean sphere which again is the action at infinity of an isometry $\hiso(\varphi(f))$ of the hyperbolic space, see \cite{chrusciel2003mass}. Thus, we should be able to choose an $\hyperbolicspace$-isometry $\hiso(\varphi(f'))$ such that $\hiso(\M)$ has a graph function $\graphf'$ such that its \lq Gau\ss ian part\rq\ $f'\approx\graphf'-\Hradius$ (solving \eqref{const-sc-eq}) vanishes. As the corresponding conformal diffeomorphism $\varphi(\hiso)$---which we are looking for---is only characterized by the action of the hyperbolic isometry \emph{at infinity}, we do not directly choose the necessary $\hyperbolicspace$-isometry, but instead step by step choose a family of isometries $\{\Phi_t\}_{t\in\interval*0*1}$ of the hyperbolic space such that
\begin{enumerate}[label=(\alph{*}),nosep]
\item all $\Phi_t(\M)$ satisfy the same assumptions as $\M$ (with respect to the \emph{same} center point $p_0$),
\item the Gau\ss ian parts ${}_tf\approx\graphf<t>-\Hradius$ (solving \eqref{const-sc-eq}) of the graph functions $\graphf<t>$ of $\Phi_t(\M)$ gets smaller as $t$ gets larger and vanishes for $t=1$.
\end{enumerate}\smallskip

Here, the author thanks Katharina Radermacher for the idea to interpret the artificial quantity $\int_{\M}\exp(2\graphf)\,\frac{\outx_i}\rad\d\mug$ as \lq pseudo-center\rq\ of the interior of $\M$ which is a crucial idea in order to choose these isometries, see Remark~\ref{Pseudo-center}.
\end{remark}
\begin{proof}[Proof of Theorem~\ref{W2pSurfaceRegularity}]
As all assumptions are geometric ones, we can apply an isometry $\hiso$ of the hyperbolic space such that $\hiso(p_0)=0$, \ie without loss of generality $p_0=0$.\smallskip

\emph{Step 1 {\normalfont(}$\M$ is a Graph{\normalfont):}}
Let $X$ be the tangential projection of the radial direction $\eta=\houtlevi*\rad=\eukoutlevi*\rad$ and $\fX$ be its norm square, \ie$X:=\eta^T:=\eta-\houtg*(\eta,\hnu)\,\hnu$ and $\fX:=\vert X\vert_{\houtg*}^2$. Furthermore, let $\gamma:\R\to\M$ be the integral curve to $X$ through an arbitrary point $p\in\M$---as $\M$ is compact and without boundary, $\solX$ is well-defined on the entire $\R$. We see that $\fXsolX:=\fX\circ\solX$ satisfies
\[ \fXsolX = \houtg*(X\circ\solX,\eta) = (D_X\,\rad)\circ\solX = \partial[t]@{\rad@\solX} =: \rad@\solX', \]
which by the compactness of $\M$ implies
\begin{equation*}
 \int_s^t \fXsolX(u)\d u = \int_s^t \rad@\solX'(u) \d u = d(\vphantom{\big|}\solX(t)) - d(\vphantom{\big|}\solX(s)) \le \max_{\M}\rad - \min_{\M}\rad < \infty. \labeleq{zFund->Graph_Section_step_1}
\end{equation*}
In particular, $\fXsolX(t_n)\to0$ for some sequence $t_n\to{-}\infty$. Furthermore, we see
\begin{align*}
 \hzFund(X,X)
 ={}& \houtg(\houtlevi*\!_X(\eta-\houtg(\eta,\hnu)\hnu),\hnu)
 = 		\houtg(\houtlevi*\!_X\eta,\hnu) - D_X\houtg(\eta,\hnu) \\
 ={}& (\houtHess*\,\rad)(X,\hnu) - D_X\houtg(\eta,\hnu) \labeleq{k_XX__1}
\end{align*}
and 
\begin{equation*}
 {-}\frac12\fXsolX' = \frac12D_X(1-\fX)
 = \frac12D_X(\houtg(\eta,\hnu)^2) = \houtg(\eta,\hnu)\,D_X\houtg(\eta,\hnu). \labeleq{k_XX__2}
\end{equation*}
A direct calculation gives $\houtHess\rad = \frac{\cosh(\rad)}{\sinh(\rad)}(\houtg-\d\rad\otimes\d\rad)$ and combining this with \eqref{k_XX__1} and \eqref{k_XX__2} gives
\begin{align*}
 \houtg(\eta,\hnu)\,\hzFund(X,X)
 ={}& {-}\frac{\cosh(\rad@\solX)}{\sinh(\rad@\solX)} \houtg*(\eta,X)\houtg*(\eta,\hnu)^2 + \frac12\,\fXsolX' \\
 ={}& {-}\frac{\cosh(\rad@\solX)}{\sinh(\rad@\solX)} \fXsolX(1-\fXsolX) + \frac12\,\fXsolX'.
\end{align*}
In $t\in\R$ with $\houtg(\eta,\hnu)_{\solX(t)}\ge\frac12$, this can be written as
\begin{align*}
 \frac{\fXsolX'}2
	={}& \houtg(\eta,\hnu)\,\hzFundtrf(X,X) + \fXsolX(\frac\hH2+\frac{\cosh(\rad@\solX)}{\sinh(\rad@\solX)}+\hH\frac{\houtg(\eta,\hnu)-1}2 - \frac{\cosh(\rad@\solX)}{\sinh(\rad@\solX)} \fXsolX) \\
	\le{}& \vert\hzFundtrf(X,X)\vert
			+ \fXsolX(\frac{\hH}2+\frac{\cosh(\rad@\solX)}{\sinh(\rad@\solX)})
				- \frac{\fXsolX^2}2(\frac{\hH}{1+\houtg(\eta,\hnu)} + 2\frac{\cosh(\rad@\solX)}{\sinh(\rad@\solX)}) \\
	\le{}& (2\,\vert\hzFundtrf\vert + \sinh(\rad@\solX)^{{-}2}\,(\vphantom{\big|}\Hradius-\rad@\solX))\,\fXsolX
			- \frac14\fXsolX^2 \\
	\le{}& (2\,\vert\hzFundtrf\vert+C\exp(-2(1-\zeta)\Hradius)\Hradius)\,\fXsolX - \frac14\fXsolX^2,
\end{align*}
if $\Hradius$ is sufficiently large. As $\fXsolX(t_n)\to0$, we can assume that---for some subsequence---
\[ 1 \ge \houtg(\eta,\hnu)_{\solX(t_n)} \xrightarrow{n\to\infty} 1. \]
In particular, there exists a $s\in\R$ such that $\houtg(\eta,\hnu)_{\solX(s)}\ge\frac12$ and $\fXsolX'(s)\ge0$, where $k\in\N$ is arbitrary. Using the above, we have for such a $s\in\R$
\begin{equation*}\labeleq{graph__control_on_normal__2}
 \fXsolX(s) \le 8\,\vert\hzFundtrf\vert + C\,\exp({-}(1+\outve')\Hradius) \le C\,\exp({-}(1+\outve')\Hradius) \ll\frac14,
\end{equation*}
where $\outve':=\min\{1-2\zeta,\outve\}>0$. This implies $\houtg(\eta,\hnu)_{\solX(s)}>\frac12$. Thus, we know that the set $[\houtg(\eta,\hnu)_{\solX}\ge\frac12]\cap[\fXsolX'\ge0]$ is open and closed within $[\fXsolX'\ge0]$. With $\liminf_{t\to{-}\infty}\fXsolX(t)=0$, this proves
\begin{equation*}\labeleq{graph__control_on_normal}
 \sup_{t\in\R}\fXsolX(t) \le C\,\exp({-}(1+\outve')\Hradius), \qquad
		\houtg(\eta,\hnu)_{\solX}\ge 1 - C\,\exp({-}(1+\outve')\Hradius).
\end{equation*}
As $\solX$ was the integral curve to~$X$ through an arbitrary point~$p$, we get the same inequality for $\sup_{\M}\fX$ instead of $\sup_\R \fXsolX$. In particular, $\M$ is a graph over the concentric sphere. Re-examining \eqref{graph__control_on_normal__2}, we see that we already proved the claimed inequality on $\houtlevi\,\vphantom{d}^\hyperbolich\hspace{-.05em}\bar d_{p_1}-\hnu$ if we proved the other claims of this theorem (as those imply $\zeta=0$). \smallskip\pagebreak[2]

\noindent\emph{Step 2 {\normalfont(}controlling the graph function {\normalfont I):}}
By the above, there is a graph function with $\graphf\in\Ck(\hsphere_\Hradius(0))$ such that $\M=\{\text{exp}_{\Hradius p}(\graphf(\Hradius p)\hnu)=(\graphft(p),p)\,:\,p\in\sphere_1\}$, where $\graphft(p):=\graphf(\Hradius p)+\Hradius\in\Ck(\sphere)$ and where we used spherical coordinates. \pagebreak[1] Now, we prove
\begin{equation*}\labeleq{solutionf} \exists\,\solutionf\in\Ck^\infty(\sphere):\quad\Vert\graphft-\Hradius-\solutionf\Vert_{\Wkp^{2,p}(\sphere)} \le C\,\exp({-}\outve'\Hradius), \qquad\sphlaplace \solutionf = 1 - \exp(2\solutionf).\nopagebreak \end{equation*}\nopagebreak
where again $\outve'=\min\{1-2\zeta,\outve\}$.\pagebreak[2]

Let $(\partial*_r,\partial*_2,\partial*_3)$ denote the standard coordinate frame of spherical coordinates of $\R^3$. In this frame and for $\ii\in\{2,3\}$, we have
\begin{equation*}
 \vert\partial*_{\ii}\graphft\vert^2
 =		\vert\houtg*(\partial*_{\ii}\graphFt,X)\vert^2
 \le \vert\partial*_{\ii}\graphFt\vert_{\houtg*}^2\,\fX
 =		(\vert\partial*_{\ii}\graphft\vert^2 + \sinh(\graphft)^2\sphnormof{\partial*_{\ii}}^2)\fX,
\end{equation*}
where $\graphFt(p):=(\graphft(p).p)$. This implies
\begin{equation*}\labeleq{graph-levih_1}
 \vert\sphlevi*h\vert_{\sphg}^2 \le \frac\fX{1-\fX}\sinh(\graphft)^2 \le 2\fX\sinh(\graphft)^2, \quad
 \vert\graphft-\Hradius\vert \le \max_{\M}\{\rad-\Hradius\} \le \zeta\Hradius
\end{equation*}

Per definition of the mean curvature, we have
\def\sh{\text{sh}}\def\ch{\text{ch}}
\begin{equation*} \left\{\ \begin{aligned}\hspace{3em}&\hspace{-3em}
 \sphlaplace\graphft - (\frac{\ch}{\sh}\vert\sphlevi*\graphft\vert_{\sphg}^2 - \sh^{{-}2}\,\sphHess\,\graphft(\sphlevi*\graphft,\sphlevi*\graphft))(1+\sh^{{-}2}{\vert\sphlevi*\graphft\vert_{\sphg}^2})^{-1} \\
	={}& 2\,\sh\,\ch + \hH\,\sh^2\,(1+\sh^{{-}2}\vert\sphlevi*\graphft\vert_{\sphg}^2)^{\frac12}
	\end{aligned}\right.\labeleq{LaplaceGraph},
\end{equation*}
where $\sh:=\sinh(\graphft)$, $\ch:=\cosh(\graphft)$, and therefore \eqref{graph__control_on_normal} and the first inequality in \eqref{graph-levih_1} imply
\begin{equation*}
 \Vert\sphlaplace\graphft - 1 + \frac{\sinh(\graphft)^2}{\sinh(\Hradius)^2}\Vert_{\Lp^p(\sphere)}
	\le C\,\exp({-}(1+\outve)\Hradius)\,\Vert\sphHess\,\graphft\Vert_{\Lp^p(\sphere)} + C\,\exp({-}\outve'\Hradius)
\end{equation*}
Using the regularity of the Laplace operator and the second inequality in \eqref{graph-levih_1}, we get $\Vert\sphHess\,\graphft\Vert_{\Lp^p(\sphere)}\le C\,\exp({-}\outve'\Hradius)$. Including this in the last approximation, we get
\begin{equation*}\labeleq{LaplaceGraph_approx}
 \Vert\sphlaplace(\graphft-\Hradius) - 1 + \exp(2(\graphft-\Hradius))\Vert_{\Lp^p(\sphere)}
	\le C\,\exp({-}\outve'\Hradius). \labeleq{laplace-graphf}
\end{equation*}
In other words, the scalar curvature $\sc'$ of the metric\footnote{Note that this metric is \emph{not} the metric of $\M$.} $\g':=\exp(2(\graphft-\Hradius))\sphg*$ satisfies $\Vert\sc'-1\Vert_{\Lp^p(\sphere)} \le C\,\exp(-\outve'\Hradius)$. By \cite[Thm~A.1]{nerz2015GeometricCharac}, there is a conformal diffeomorphism $\varphi:\sphere\to\sphere$ such that the conformal function $v\in\Wkp^{2,p}(\sphere)$ with $\pullback\varphi(\g')=\exp(2v)\sphg*$ satisfies $\Vert v\Vert_{\Wkp^{2,p}(\sphere)}\le C\,\exp({-}\outve'\Hradius)$ or in other words \eqref{solutionf} holds.

\noindent\emph{Step 3 {\normalfont(}controlling the graph function {\normalfont II} -- choosing an isometry{\normalfont):}}
Let us first introduce the \lq pseudo-center\rq\ $Z(\Omega)=(Z^i(\Omega))_{i=1}^3$ defined for any compact region $\Omega\subseteq\R^3$ by
\[ Z^i(\Omega) := \int_\Omega \frac{\outx^i}\rad\d\houtmug. \]
Now, let $\Omega\subseteq\R^3$ denote the interior of $\M\subseteq\R^3$. We see
\begin{equation*}
 Z := Z(\Omega)
 = \int_{\sphere}\int_0^{\graphft(p)}\frac{\outx^i(r\,p)}r\sinh(r)^2 \d r\,\d\sphmug
 = \frac12\int_{\sphere}(\sinh(\graphft)\,\cosh(\graphft)-\graphft)p^i \d\sphmug,
\end{equation*}
Using $\vert\sinh(\graphft)\cosh(\graphft)-\frac14\exp(2\graphft)\vert\le\exp({-}2\graphft)$, $\sphlaplace p^i={-}2p^i$, and the controls of $\graphft$, we get
\begin{equation*}
 \vert 4Z - \frac{\exp(2\Hradius)}2\int\exp(2\solutionf)p^i\d\mug\vert
  \le \vert 4Z - \int(\frac{\exp(2\graphft)}2+\sphlaplace\graphft)p^i\d\sphmug\vert + C\,\exp((2-\outve')\Hradius)
	\le C\,\exp((2-\outve')\Hradius),
\end{equation*}
where $\solutionf\in\Ck^\infty(\sphere^2)$ is as in~\eqref{solutionf}.
By Lemma~\ref{lambda_control_by_ux_i_control}, we know
\[ \theta(\text{exp}(\Vert\solutionf\Vert_{\Lp^\infty(\M)})) = (\sum_{i=1}^3(\fint_{\sphere}\exp(2\solutionf)\,p^i \d\sphmug)^2)^{\frac12}, \quad
	\theta(\lambda):=\vert\frac{1+4\ln(\lambda)\lambda^2-\lambda^4}{\lambda^4-2\lambda^2+1}\vert \]
and therefore
\[ \sup_{\M}\vert\vphantom{\big|}\rad-\Hradius\vert
		\le \Vert\solutionf\Vert_{\Lp^\infty(\sphere)} + C\,\exp({-}\outve'\Hradius)
		\le \ln(\theta^{-1}(\frac{8\,\vert Z\vert}{\exp(2\Hradius)} + C\,\exp({-}\outve'\Hradius)))+C\,\exp({-}\outve'\Hradius). \]
Now, let us assume that $\vert Z(\Omega)\vert\neq0$. By Lemma~\ref{pseudo-center}, there exists a smooth one parameter family $\{\hiso_t\}_{t\in\interval*0*1}$ of isometries $\hiso_t:\R^3\to\R^3$ of the hyperbolic space with $\hiso_0=\id$, $\vert Z(\hiso_t(\Omega))\vert$ is strictly monotone decreasing, and $Z(\hiso_1(\Omega))=0$. For arbitrary $\zeta'\in\interval\zeta{\frac12}$, there exists $t_0>0$ such that $\hiso_t(\M)$ satisfies the assumptions of this theorem for every $t\le t_0$ and $p_0=0$ if we replace $\zeta$ by $\zeta'$, because these assumptions depend continuously on $t$ as $\hiso_t$ does so. However, as $\vert Z(\hiso_t(\Omega))\vert$ is strictly smaller than $\vert Z(\Omega)\vert$, the above implies that $\hiso_t(\M)$ satisfies the assumptions of this theorem for $\zeta'\le\zeta$. In particular, we can apply the theorem on every $\hiso_t(\M)$ without changing $\zeta$ along $t$, when we replace $\zeta\Hradius$ by $\zeta\Hradius+C\,t\,\exp({-}\outve\Hradius)$. As $Z(\hiso_1(\Omega))=0$, we can without loss of generality assume that $Z(\Omega)=0$ if we replace $p_0=0$ by $p_1:=\hiso_1^{-1}(0)$.

Thus using the regularity of the Laplace operator, the above proves 
\[ \Vert\vphantom{\big|}\graphf\Vert_{\Wkp^{2,p}(\hsphere_\Hradius(p_1))} \le C\,\exp({-}\outve'\Hradius) \]
for the graph function of $\M$ above $\hsphere_\Hradius(p_1)$ which is the same as the one of $\hiso_1(\M)$ above $\hsphere_\Hradius(0)$.
In particular, $\vert\rad-\Hradius\vert\ll C$ and we can therefore assume $\zeta=0$ implying $\outve'=\outve$. This proves all the claims.
\end{proof}

\begin{lemma}\label{pseudo-center}
Let $\Omega\subseteq\hyperbolicspace$ be any compact region with well-defined \lq pseudo-center\rq\ $Z(\Omega):=(Z^i(\Omega))_{i=1}^3$, where
\[ Z^i(\Omega) := \int_\Omega \frac{\outx^i}\rad\d\houtmug \qquad\forall\,i\in\{1,2,3\}. \]
Either $Z(\Omega)=0$ or there is an one-parameter family $\{\Phi_t:\R^3\to\R^3\}_{t\in\interval*0*1}$ of isometries of the hyperbolic space which depends smoothly on $t\in\interval*0*1$ with $\Phi_0=\id$ such that $t\mapsto \vert Z(\Phi_t(\Omega))\vert^2$ is strictly monotone decreasing and vanishes at $t=1$.
\end{lemma}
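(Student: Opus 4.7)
The plan is to parametrize the pure boosts of $\hyperbolicspace$ by $\hyperbolicspace$ itself: for each $p\in\hyperbolicspace$, let $\hiso^{(p)}$ denote the unique pure boost sending $0$ to $p$, and define $\Psi\colon\hyperbolicspace\to\R^3$ by $\Psi(p) := Z(\hiso^{(p)}(\Omega))$. Then $\Psi(0) = Z(\Omega)$, and the lemma reduces to producing a smooth path $p\colon[0,1]\to\hyperbolicspace$ with $p(0) = 0$, $\Psi(p(1)) = 0$, and $|\Psi(p(t))|^2$ strictly monotone decreasing; one then sets $\Phi_t := \hiso^{(p(t))}$.

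First I would analyze the behavior of $\Psi$ at infinity. As $p\to\infty$ along the geodesic ray in spatial direction $\hat v$, the image $\hiso^{(p)}(\Omega)$ is a region of fixed hyperbolic diameter centered near $p$, so every point of $\hiso^{(p)}(\Omega)$ has Euclidean radial unit vector $\outx/\rad$ uniformly close to $\hat v$. Consequently $\Psi(p)\to\hvolume{\Omega}\,\hat v$, so $\Psi$ extends continuously to the compactified space $\hyperbolicspace\cup S^2_\infty$ with boundary values $\Psi|_{S^2_\infty}(\hat v) = \hvolume{\Omega}\,\hat v$. This boundary map has topological degree $+1$ as a map to $\R^3\setminus\{0\}$, so a standard degree argument forces $\Psi^{-1}(0)\neq\emptyset$.

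Second, I would establish the key infinitesimal descent: at every $p$ with $\Psi(p)\neq 0$, an explicit direction in $T_p\hyperbolicspace$ strictly decreases $|\Psi|^2$. By the $\hyperbolicspace$-equivariance of the construction it suffices to prove this at $p=0$. Using that each $\hiso^{(p)}$ is an isometry, a direct computation yields
\begin{equation*}
  \frac\d{\d s}\bigg|_{s=0} \Psi^i\bigl(\exp_0(s\hat v)\bigr) = \int_\Omega \frac1{\rad(\outx)}\,\bigl(X_{\hat v}^i(\outx) - (X_{\hat v}(\outx)\cdot\hat r(\outx))\,\hat r^i(\outx)\bigr)\,\d\houtmug(\outx),
\end{equation*}
where $\hat r := \outx/\rad$ and $X_{\hat v}$ is the boost Killing field at $0$ in direction $\hat v$. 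Expanding $X_{\hat v}$ in polar-Cartesian coordinates and contracting with $\hat v$, the scalar derivative $\frac\d{\d s}|_{s=0}\bigl(\hat v\cdot\Psi(\exp_0(s\hat v))\bigr)$ reduces (up to a positive geometric factor) to $-\int_\Omega \cosh(\rad)\sinh(\rad)\bigl(1-(\hat v\cdot\hat r)^2\bigr)\,\d\rad\,\d\sphmug$, which is strictly negative because $\Omega$ has non-empty interior. Choosing $\hat v = \Psi(0)/|\Psi(0)|$ and reversing sign thus gives the desired infinitesimal descent; at general $p$, transporting this direction to $T_p\hyperbolicspace$ via $D\hiso^{(p)}|_0$ defines a smooth descent vector field $V$ on $\{\Psi\neq 0\}$.

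Finally, solving the ODE $\dot p(t) = V(p(t))$ from $p(0) = 0$ produces a path along which $|\Psi|^2$ is strictly monotone decreasing. The flow cannot escape to infinity because $|\Psi|^2\to\hvolume{\Omega}^2 > |Z(\Omega)|^2$ (the strict inequality is forced by $\hat r$ not being $\houtmug$-a.e.\ constant on $\Omega$), pinning the flow into a compact region of $\hyperbolicspace$; and it cannot stall at any positive value since $V$ vanishes only on $\Psi^{-1}(0)$. Hence the flow converges to $\Psi^{-1}(0)$ in finite time, and a smooth reparametrization onto $[0,1]$ yields the required family $\Phi_t$. The main obstacle is the pointwise infinitesimal descent calculation, but the explicit form of the boost Killing field in polar coordinates makes the positivity of the integrand transparent.
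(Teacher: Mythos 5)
Your argument follows essentially the paper's route: your infinitesimal descent calculation is the paper's Step~1 (carried out in geodesic-polar rather than Poincar\'e-ball coordinates), the boundary behavior of $\Psi$ is the observation the paper uses to prove $\vert v_*\vert<1$ in its Step~2, and your degree argument reproduces the paper's closing "less constructive proof" remark. Two small repairs first. The sign of your descent integral is wrong: in the chart $\outx$ with $\houtg*=\d r^2+\sinh(\rad)^2\sphg*$ one computes $X_{\hat v}^i-\hat r^i(\hat r\cdot X_{\hat v})=\frac{\rad\cosh(\rad)}{\sinh(\rad)}\bigl(\hat v^i-(\hat v\cdot\hat r)\hat r^i\bigr)$, so the contraction of $\frac\d{\d s}\Psi^i$ with $\hat v$ equals $\int_\Omega\frac{\cosh(\rad)}{\sinh(\rad)}\bigl(1-(\hat v\cdot\hat r)^2\bigr)\d\houtmug>0$; moving toward $\hat v$ thus increases $\hat v\cdot\Psi$, and the descent direction is $-\Psi(p)/\vert\Psi(p)\vert$, consistent with the paper's sign $\frac\d{\d t}\vert Z\vert^2>0$ toward $+Z$. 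Also, "by $\hyperbolicspace$-equivariance it suffices to check $p=0$" needs a word, since pure boosts do not form a subgroup: composing a boost at $0$ with the boost to $p$ gives a boost times a rotation, and one must invoke the rotation-invariance of $\vert Z\vert$ to identify the decrease of $\vert\Psi\vert^2$ along that two-boost family with the decrease along a genuine path in $\hyperbolicspace$.

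The genuine gap is in your final step: the claim that the ODE flow $\dot p=V(p)$ "converges to $\Psi^{-1}(0)$ in finite time, and a smooth reparametrization onto $[0,1]$ yields the required family." A smooth vector field that vanishes on $\Psi^{-1}(0)$ generically takes \emph{infinite} time to reach its zero set, and the trapping argument only shows that the $\omega$-limit set of the trajectory lies in $\Psi^{-1}(0)$, which could a~priori have positive dimension, so the flow need not even converge to a single point. To obtain a family over the \emph{closed} interval $[0,1]$ with $Z(\Phi_1(\Omega))=0$ one must show that the path of boost parameters has a limit and that the reparametrized family is continuous (indeed smooth) up to $t=1$. This is exactly what the paper's Step~2 provides: it normalizes so that $\vert Z_t\vert=(1-t)\vert Z(\Omega)\vert$ (forcing the zero at $t=1$), proves $\vert\d C_t/\d t\vert\le C(\M)$ for the speed factor $C_t$, and combines this with the trapping bound $\vert v_*\vert<1$ to conclude convergence of the boost parameter as $t\to1$. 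Without that control, your construction produces a family over $[0,1)$ only.
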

\begin{remark}[The pseudo-center]\label{Pseudo-center}
In Theorem~\ref{W2pSurfaceRegularity}, we have to control
\[ \int_{\sphere_\rradius} \frac{\outx^i}{\rad}\,\exp(2\graphf) \d\mug
		\approx 4\int_{\sphere_\rradius}\frac{\outx^i}\rad\,(\sinh(\graphf)\,\cosh(\graphf)-\graphf)\d\sphmug \]
in order to get the estimates on $\graphf$ we aime for. Here, $\graphf$ is the graph function of our surface $\M$. By a simple integration, we know
\[  \frac12\int_{\sphere_\rradius(0)}\frac{\outx^i}\rad\,(\sinh(\graphf)\,\cosh(\graphf)-\graphf)\d\sphmug
		= \int_0^\rradius\int_{\sphere}\frac{\outx^i}\rad\,\sinh(r)^2 \d\mug\d r
		= \int_{\Omega}\frac{\outx^i}\rad\d\outmug \]
for the interior $\Omega$ of $\M$. Up to multiplication with the constant $\volume{\Omega}^{-1}$, this is the mean value of the direction (from the origin) which is quite \emph{similar}---thus, \lq pseudo-\rq---to the mean value $\fint_{\Omega}\outx^i\d\houtmug$ of the coordinates. The latter is often called \emph{coordinate center} of \lq the object $\Omega$\rq (here, a subset of $\R^3$)---although this name is misleading in the hyperbolic setting, see \cite{cederbaum2015center}. However, if it is some kind of center, then it should behave nicely under the change of coordinates. Thus, we have found a way to understand the originally artificial integral $\int\frac{\outx^i}\rad\exp(2\graphf)\d\mug$ giving as a way to get ride of it---by changing coordinates. This is based on the idea to interpret $\int\frac{\outx^i}\rad\exp(2\graphf)\d\mug$ as pseudo-center by Katharina Radermacher.
\end{remark}
\begin{proof}$\,\!$%
\nopagebreak\par\nopagebreak
\emph{Step 1 (Local existence):}
Without loss of generality $Z(\Omega)\neq0$ and by applying a rotation, we can assume $Z=(\vert Z\vert,0,0)$. Let $\varphi:\R^3\to\eukball^3_1(0)$ be the rotation-free isometry between the hyperbolic space $(\R^3,\houtg)$ and its Poincar\'e ball model with $\varphi(0)=0$. In particular,
\[ Z^i(\Omega) = 4\int_{\varphi(\Omega)}\frac{y^i}{\vert y\vert}\,(1-\vert y\vert^2)^{{-}2} \d y = \int_{\varphi(\Omega)}\frac{y^i}{\vert y\vert}\,\d\houtmug \]
and therefore we can suppress $\varphi$ in the following.
Let furthermore $\hiso_t:B_1(0)\to B_1(0)$ be the rotation-free isometry of the Poincar\'e ball with $\hiso_t(0)=t\frac{Z(\Omega)}{\vert Z(\Omega)\vert}=(t,0,0)=te_1$, \ie
\[ \hiso_t(rp) = \frac{t(2rt\,p^1+r^2+1)e_1 + (1-t^2)rp}{r^2\,t^2+2rtp^1+1} \quad\forall\,r\in\interval*0*1,\,t\in\interval-11,\,p\in\sphere_1(0). \]
In particular,
\begin{equation*}\labeleq{del_t__integrand}
 \partial[t]<t=0>(\frac{\hiso_t(rp)^i}{\vert\hiso_t(rp)\vert})
	= \frac{1+r^2}r(\delta^{1i}-p^1\,p^i)
			\qquad\forall\,p\in\sphere,\,r\in\interval01
\end{equation*}
implying
\begin{align*}
 \left.\frac\d{\d t}\right|_{t=0}\vert Z(\hiso_t(\Omega))\vert^2
	={}& \eukoutg(Z(\Omega),\partial[t]<t=0>Z(\hiso_t(\Omega)))
	= 2\vert Z(\Omega)\vert\,\left.\frac\d{\d t}\right|_{t=0} Z^1(\hiso_t(\Omega)) \\
	={}& 2 \vert Z(\Omega)\vert \int_\Omega\frac{1+\rad@\outy^2}{\rad@\outy}(1-\frac{\vert\outy^1\vert^2}{\rad@\outy^2})\d\houtmug.
\end{align*}
This vanishes if and only if $Z(\Omega)=0$ or $\Omega\subseteq[\outy^1=1]$. As $\houtmug([\outy^1=1])=0$, we know $Z(\Omega)=0$ or $\frac\d{\d t}\vert Z(\hiso_t(\Omega))\vert^2>0$.

\emph{Step 2 (global existence):} Now, denote by $\HISO:\eukball^3_1(0)\to\text{iso}(\hyperbolicspace):v\mapsto\HISO_v$ the smooth map from the Euclidean ball to the space of isometries of $\hyperbolicspace$ such that $\HISO_v(0)=v$, \ie for $v\in\eukball_1(0)$
\[ \HISO_v(rp) = \frac{(2r\,\eukoutg(v,p)+r^2+1)v + (1-\vert v\vert_{\eukoutg}^2)rp}{r^2\,\vert v\vert_{\eukoutg}^2+2r\eukoutg(v,p)+1} \qquad\forall\,r\in\interval*0*1,\,p\in\sphere_1(0). \]
Furthermore, let $\alpha:\intervalI\to \eukball^3_1(0)$ be the maximal continuous, piecewise smooth curve such that $\HISO_{\alpha(t)}$ is the isometry with
\[ \partial[t]@{(\HISO_{\alpha(t)}(rp))}
	= C_t\partial[\tau]<\tau=0>(\frac{\tau(1+2r\tau(p\cdot Z_t)+r^2)Z_t+(1-\tau^2)rp}{1+r^2\tau^2+2r\tau(p\cdot Z_t)+1}) \]
and $0\in\intervalI\subseteq\interval*0*1$, where $Z_t:=Z(\HISO_{\alpha(t)}(\M))$ and where $C_t>0$ is some constant such that $\vert Z_t\vert = (1-t)\vert Z(\M)\vert$. By the first step, we can choose $\intervalI$ to be an open subinterval of $\interval*0*1$. Left to prove is $1\in\intervalI$ which is the case if $\intervalI$ is closed.

First, we note that \eqref{del_t__integrand} implies 
\[ \vert\frac{\d(Z_t^i)}{\d t}\vert
		\le C_t\int_{\Omega}\frac{1+\rad@\outy^2}{\rad@\outy}\vert Z_t^i-Z_t^jp_jp^i\vert\d\outmug
		\le C(\M)\,C_t \]
for some constant $C(\M)$ depending on $\M$, but not on $t$. This implies $\vert\nicefrac{\d C_t}{\d t}\vert\le C(\M)$. In particular, $\alpha(t)\to v_*\in\overline{\textstyle\eukball_1^3(0)}$ as $t\to\sup\intervalI$ and equally calculating the $Z_t$, we see that $\alpha$ is in fact smooth on $\intervalI$.

Now, we prove $v_*\in \eukball_1^3(0)$, \ie $\sup\vert\alpha(t)\vert<1$. Assume that this is not true, \ie $\lim_t\alpha(t)=\vert v_*\vert_{\eukoutg}=1$. Let $r>0$ be such that $\Omega\subseteq\hball_r^3(0)$. Per definition of $\HISO$, we know $\HISO_{\alpha(t)}(\Omega)\subseteq\hball_r^3(\alpha(t))$ and directly see that this implies
\[ \sup\lbrace\vert\outy-\alpha(t)\vert_{\eukoutg}\ \middle|\ \outy\in\HISO_{\alpha(t)}(\Omega)\rbrace \xrightarrow{t\to\sup \intervalI} 0. \]
Thus, we get $\volume{\M}>\vert Z(\HISO_{\alpha(t)}(\Omega))\vert\to\volume{\M}$ which contradicts the fact that $\vert Z(\HISO_{\alpha(t)}(\M))\vert$ is decreasing.

Thus, we know that $\gamma(t)$ converges in $\eukball_1^3(0)$ as $t\to\sup\intervalI$ and therefore we can extend $\gamma$ continuously to $\sup\intervalI$. Combining this with the first step, we have proven $\intervalI=\interval*0*1$ which proves the claim.
\end{proof}
\begin{remark}[A less constructive proof]
You can prove this theorem in a less constructive way: Using the same argument as in the step in which we prove $\vert v_*\vert<1$, we see that the map $h:\eukball^3_1(0)\to\eukball^3_1(0):p\mapsto \hvolume{\Omega}^{-1}\,Z(\HISO_p(\Omega))$ can continuously extended to $\bar h:\overline{\textstyle\eukball^3_1(0)}\to\overline{\textstyle\eukball^3_1(0)}$ by defining $\bar h(p)=p$ for every $p\in\bar X:=\overline{\textstyle\eukball^3_1(0)}$ with $\vert p\vert=1$. In particular, $h$ is a continuous map from $\bar X$ to itself with $h|_{\border\bar X}=\id|_{\border\bar X}$ and therefore $h(\bar X)=\bar X$. In particular, there is a continuous curve $\alpha:\interval*0*1\to\eukball^3_1(0)$ such that $\vert h(\alpha(t))\vert=t\,\vert h(0)\vert$. This proves the claim.
\end{remark}

\begin{lemma}\label{lambda_control_by_ux_i_control}
If $u\in\Ck^\infty(\sphere)$ is a solution of $\sphlaplace u=1-\exp(2u)$, then $\text{exp}(\Vert u\Vert_{\Lp^\infty(\M)}) = f^{-1}(K(u))$, where $f^{-1}$ is the inverse function of the diffeomorphism $f:\interval*1\infty\to\interval*01$, and where $f$ and $K$ are defined by
\[ f(\lambda) := \frac{\lambda^4-\ln(\lambda)\lambda^2-1}{\lambda^4-2\lambda^2+1}\ \forall\,\lambda>1, \quad
	 f(1) := 0, \quad
	K(v) = (\sum_{i=1}^3\vert\fint_{\sphere} \exp(2u)\,x_i\d\mug\vert^2)^{\frac12}. \]
\end{lemma}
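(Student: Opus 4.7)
I plan to exploit the fact that $\sphlaplace u=1-\exp(2u)$ is the Liouville equation: it asserts exactly that the conformal metric $\tilde g:=\exp(2u)\sphg$ on $\sphere$ has Gauss curvature identically equal to $1$, via the transformation law $K_{\tilde g}=\exp({-}2u)(1-\sphlaplace u)$. By the uniformization theorem (or equivalently an Obata-type rigidity on $\sphere$), $(\sphere,\tilde g)$ is isometric to $(\sphere,\sphg)$, and because the metrics are conformally related the realizing isometry is a conformal self-diffeomorphism $\varphi\colon\sphere\to\sphere$ with $\varphi^*\sphg=\exp(2u)\sphg$. Every solution $u$ is therefore the conformal factor of some element of the M\"obius group $\text{Conf}(\sphere)\cong\mathrm{PSL}(2,\mathds{C})$.

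Next I would invoke the $KAK$-decomposition of this six-dimensional group to write $\varphi=R_1\circ D_\lambda\circ R_2$, where $R_1,R_2\in SO(3)$ are rotations and $D_\lambda$ is the one-parameter family of conformal dilations along the $x_3$-axis, parametrized by $\lambda\ge 1$. Both sides of the desired identity are invariant under these two rotations: post-composition by $R_1$ leaves $u$ unchanged (since $R_1^*\sphg=\sphg$), while pre-composition by $R_2$ sends $u\mapsto u\circ R_2$, which trivially preserves $\Vert u\Vert_{\Lp^\infty(\sphere)}$ and merely rotates the vector $\fint_{\sphere}\exp(2u)(x_1,x_2,x_3)\,d\sphmug$ by $R_2^{-1}$, hence preserves its Euclidean length $K(u)$. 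We may therefore assume $\varphi=D_\lambda$, in which case both quantities can be computed explicitly.

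A direct calculation via stereographic projection yields
\begin{equation*}
 \exp(u_\lambda(x))=\frac{2\lambda}{(1+\lambda^2)+(\lambda^2-1)\,x_3}.
\end{equation*}
Since the denominator takes values in $[2,\,2\lambda^2]$ for $\lambda\ge 1$ with extrema at $x_3=\pm1$, one reads off $\Vert u_\lambda\Vert_{\Lp^\infty(\sphere)}=\ln\lambda$, i.e.\ $\exp(\Vert u\Vert_{\Lp^\infty(\sphere)})=\lambda$. Axial symmetry of $u_\lambda$ about the $x_3$-axis kills the $x_1$- and $x_2$-components of the mean and reduces $K(u_\lambda)$ to a one-dimensional integral,
\begin{equation*}
 K(u_\lambda)=\Big|\,2\lambda^2\int_{-1}^{1}\frac{t\,dt}{\bigl((1+\lambda^2)+(\lambda^2-1)t\bigr)^{2}}\Big|,
\end{equation*}
which is evaluated in closed form by the substitution $s=(1+\lambda^2)+(\lambda^2-1)t$. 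Simplification of the resulting expression in $\ln\lambda$, $\lambda^2$, and $(\lambda^2-1)^{-2}$ reproduces exactly $K(u_\lambda)=f(\lambda)$.

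Finally I would verify that $f\colon\interval*1\infty\to\interval*01$ is a diffeomorphism: continuity on $(1,\infty)$, the boundary values $f(1)=0$ (by L'H\^opital applied to the removable $0/0$ singularity at $\lambda=1$) and $f(\lambda)\to 1$ as $\lambda\to\infty$, together with strict monotonicity $f'(\lambda)>0$, are all elementary calculus checks on the explicit rational-plus-logarithm expression. Inverting then gives $\exp(\Vert u\Vert_{\Lp^\infty(\sphere)})=f^{-1}(K(u))$. The only conceptually nontrivial ingredient is the classification step in the first paragraph---invoking uniformization/Obata to parametrize all solutions by the M\"obius group---after which everything reduces to an elementary one-variable integral and a monotonicity check.
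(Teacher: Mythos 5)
Your proposal is correct and follows essentially the same route as the paper's proof: classify all solutions of $\sphlaplace u = 1 - \exp(2u)$ as conformal factors of M\"obius maps (the paper cites this directly, you derive it from uniformization), reduce by rotational invariance to a one-parameter dilation family $u_\lambda$, and then compute $\Vert u_\lambda\Vert_{\Lp^\infty}$ and $K(u_\lambda)$ explicitly. The only cosmetic difference is that you phrase the normalization via the $KAK$-decomposition of $\mathrm{PSL}(2,\mathds{C})$ and do the integral directly on the sphere, whereas the paper applies a rotation plus an equatorial inversion after stereographic projection and integrates in the plane; these are the same computation in different coordinates. One small caveat: since you did not write out the simplified integral, be aware that it evaluates to $\bigl|\frac{1+4\lambda^2\ln\lambda-\lambda^4}{\lambda^4-2\lambda^2+1}\bigr|$, which has a coefficient $4$ on the $\ln\lambda$ term; the $f$ displayed in the lemma statement drops that $4$ (a typo in the statement, since the paper's own proof and the subsequent use of $\theta(\lambda)$ in Theorem~\ref{W2pSurfaceRegularity} both carry the $4$).
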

\begin{proof}
Per definition of the stereographic projection $\psi:\sphere\to\R^2$, we know
\[ (\outx^1\circ\psi^{-1})(y) = \frac{\vert y\vert^2-1}{\vert y\vert^2+1}, \qquad\quad
	 (\outx^{\ii}\circ\psi^{-1})(y) = \frac{2\,y^{\ii}}{\vert y\vert^2+1}\quad \forall\,\ii\in\{2,3\}. \]
We chose $v\in\Ck^\infty(\R^2)$ with $\pullback\psi(\exp(2v)\delta)=\exp(2u)\sphg*$ and know
\[ v(y) = \ln(\frac{2\lambda}{\lambda^2+\vert y-y_0\vert^2}) \]
for some $\lambda>0$ and $y_0\in\R^3$, \cite[Thm~1]{chen1991}. Applying a suitable rotation $\varphi$ of $\sphere$, we can without loss of generality assume $y_0=0$. We note that by applying a inversion at the equator, $\lambda$ is replaced by $\lambda^{-1}$ while $K(v)$ does not change. Therefore, we can assume $\lambda\ge1$. 
In particular,
\begin{equation*}
 \int \outx^{\ii}\,\exp(2v)\d\sphmug
	= \int (\frac{2\lambda}{\lambda^2+\vert y\vert^2})^2\frac{2y^{\ii}}{\vert y\vert^2+1} \d y = 0
\end{equation*}
for $\ii\in\{2,3\}$ and therefore
\begin{align*}
 K(v)^2
	={}& \vert\int \outx^1\,\exp(2v)\d\sphmug\vert^2
	= \vert\int \outx^1\d\mug[\exp(2v)\sphg]\vert^2
	= \left|\int_{\R^2} (\frac{2\lambda}{\lambda^2+\vert y\vert^2})^2\frac{\vert y\vert^2-1}{\vert y\vert^2+1} \d y\right|^2 \\
	={}& \vert 4\pi\,\frac{1+4\ln(\lambda) \lambda^2-\lambda^4}{\lambda^4-2\,\lambda^2+1}\vert^2 = (4\pi f(\lambda))^2.
\end{align*}
A direct analysis shows that $f(\lambda)$ is strictly monotone increasing in $\lambda$ within $\interval*1*\infty$. Furthermore, we see
\[ u = (v - \ln(\frac2{1+|y|^2}))\circ\psi \]
and therefore $\sup\vert u\vert=\ln\lambda$.
\end{proof}

\section{A triviality on the second fundamental form}
The following lemma is well-known, but the author did not find a correct citation for it in the non-minimal setting and therefore included it for the readers convenience.
\begin{lemma}[A trivial generalization of {\cite[(1.28)]{schoen1975curvature}}]\def\vzFundtrf{\left|\zFundtrf\right|_{\hspace{-.05em}\g}}\label{Schoen-Yau75}
Let $(\M,\g*)$ be a hypersurface within a $n{+}1$-dimensional Riemannian manifold $(\outM,\outg*)$. If $\M$ has constant mean curvature, then
\[ \forall\,\delta>0\ \,\exists\,C=\Cof[n][\delta^{-1}]\!:\quad\trtr{\levi\zFundtrf}{\levi\zFundtrf}\,\vzFundtrf^2 \ge \frac{n+2-\delta}n\vzFundtrf^2\vert\levi\vzFundtrf\vert_{\g}^2 - C\vert\trzd{\outric_{\nu}}\zFundtrf\vert_{\g}^2. \]
\end{lemma}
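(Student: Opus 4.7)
Set $\rho:=|\zFundtrf|_{\g}$ for brevity. The plan is to adapt the classical Schoen--Simon--Yau refined Kato estimate \cite[(1.28)]{schoen1975curvature} from the minimal to the CMC setting, and to sharpen the ambient--curvature error so that it appears in the contracted form $\trzd{\outric_\nu}\zFundtrf$ rather than as a bare $\outric_\nu$ term. Everything is pointwise algebraic, so I fix $p\in\M$. At any point with $\rho(p)=0$ both $\rho^2|\levi\zFundtrf|_{\g}^2$ and $\rho^2|\levi\rho|_{\g}^2=\tfrac14|\levi\rho^2|_{\g}^2$ vanish and so does $(\trzd{\outric_\nu}\zFundtrf)|_p$, so the inequality is trivially $0\ge0$. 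At a point with $\rho(p)>0$ I pick an orthonormal frame $(e_1,\dots,e_n)$ diagonalising $\zFundtrf$, with eigenvalues $\lambda_i$ satisfying $\sum_i\lambda_i=0$ and $\sum_i\lambda_i^2=\rho^2$. Because $\H$ is constant, $\levi\zFund=\levi\zFundtrf$, so I set $T_{ijk}:=(\levi\zFund)_{ijk}$.

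The next step would be to record three structural facts on $T$: symmetry of $\zFund$ gives $T_{ijk}=T_{ikj}$; the CMC hypothesis gives $\sum_j T_{ijj}=\levi_i\H=0$; and Codazzi--Mainardi gives $T_{ijk}-T_{jik}={-}\outrc_{ijk\nu}$. A direct permutation count then yields $T_{ijk}=\widetilde T_{ijk}+\epsilon_{ijk}$, where $\widetilde T_{ijk}:=\tfrac13(T_{ijk}+T_{jik}+T_{kij})$ is fully symmetric and $\epsilon_{ijk}=\tfrac13(\outrc_{kij\nu}-\outrc_{ijk\nu})$; moreover $\sum_j\widetilde T_{ijj}$ is a contraction of ambient Riemann with $\nu$, hence $O(|\outric_\nu|)$. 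In the diagonal frame $(\levi_i\rho)\,\rho=\zFundtrf^{jk}T_{ijk}=\sum_j\lambda_j T_{ijj}$, so the two sides of the inequality become
\[
\rho^2\,|\levi\zFundtrf|_{\g}^2=\Bigl(\sum_\ell\lambda_\ell^2\Bigr)\Bigl(\sum_{ijk}T_{ijk}^2\Bigr),\qquad
\rho^2\,|\levi\rho|_{\g}^2=\sum_i\Bigl(\sum_j\lambda_j T_{ijj}\Bigr)^2.
\]

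The algebraic heart of the argument would be the Kato inequality for the fully symmetric (nearly) trace--free tensor $\widetilde T$:
\[
\Bigl(\sum_\ell\lambda_\ell^2\Bigr)\Bigl(\sum_{ijk}\widetilde T_{ijk}^2\Bigr)\;\ge\;\frac{n+2}{n}\sum_i\Bigl(\sum_j\lambda_j\widetilde T_{ijj}\Bigr)^2.
\]
Here I would use $\sum_j\widetilde T_{ijj}=0$ to rewrite $\sum_j\lambda_j\widetilde T_{ijj}=\sum_{j\ne i}(\lambda_j-\lambda_i)\widetilde T_{ijj}$, apply Cauchy--Schwarz in the inner sum, and exploit the full symmetry $\widetilde T_{ijj}=\widetilde T_{jij}=\widetilde T_{jji}$ (so that each off--diagonal entry contributes with multiplicity three to $|\widetilde T|^2$) together with the identity $\sum_{j\ne i}(\lambda_j-\lambda_i)^2=n\sum_\ell\lambda_\ell^2$ (valid because $\sum_i\lambda_i=0$) to extract the sharp constant $(n+2)/n$. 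Minimality is never invoked in this algebraic step, so the original Schoen--Simon--Yau computation transfers verbatim to the CMC case; the constant-$H$ hypothesis enters only through $\sum_j T_{ijj}=0$.

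The hard part will be the last step, where I have to replace $\widetilde T$ by $T=\widetilde T+\epsilon$. Expansion creates cross--terms of the form $2\lambda_j\widetilde T_{ijj}\,\epsilon_{ijj}$ on both sides and a leftover quadratic $\sum_i(\sum_j\lambda_j\epsilon_{ijj})^2$; a Young inequality with parameter $\delta$ absorbs the cross--terms into the leading piece at the price of shrinking $(n+2)/n$ to $(n+2-\delta)/n$. The delicate point---and the reason the stated error takes the refined form $|\trzd{\outric_\nu}\zFundtrf|_{\g}^2$ rather than the cruder $|\outric_\nu|_{\outg}^2\,\rho^2$---is that one must \emph{not} bound $|\epsilon_{ijj}|$ separately and pull $\rho^2$ out. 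Instead I would keep the pairing together and read it, in the diagonal frame, as $\zFundtrf^{jk}\epsilon_{ijk}=\tfrac13\,\zFundtrf^{jk}(\outrc_{kij\nu}-\outrc_{ijk\nu})$, a $\g$--trace of $\outrc_{\cdot\cdot\cdot\nu}$ against $\zFundtrf$ carrying the single free index $i$. The resulting one--index contractions $\zFundtrf^{jk}\outrc_{\cdot jk\nu}$ then collapse: the ``trace part'' $\outric_{i\nu}\sphtr\zFundtrf$ vanishes because $\zFundtrf$ is trace--free, while the remaining piece reduces (by the antisymmetry of $\outrc$ in its first pair combined with the symmetry of $\zFundtrf$, which in dimension three is an algebraic identity and in general $n$ is the observation that only the $\outric$--part of the Ricci decomposition of $\outrc$ survives this particular contraction) to $\pm(\trzd{\outric_\nu}\zFundtrf)_i$ up to a universal constant. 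Squaring in $i$, summing, and combining with the Young constant would then yield the claim with $C=\Cof[n][\delta^{-1}]$.
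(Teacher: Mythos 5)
Your approach is genuinely different from the paper's. Where the paper builds the antisymmetric five--index tensor $B_{ijklm}:=\zFundtrf_{ij}\levi*_k\zFundtrf_{lm}-\zFundtrf_{lm}\levi*_k\zFundtrf_{ij}$ (the ``Kato defect'') and extracts the constant $(n+2)/n$ from a combinatorial splitting of the index set, you decompose $T=\levi*\zFundtrf$ into its full symmetrization $\widetilde T$ plus the Codazzi remainder $\epsilon$ and propose to invoke a sharp pointwise Kato inequality for nearly trace--free, fully symmetric $3$--tensors. That is a legitimate alternative strategy, but the step you yourself flag as delicate does not go through as written, and a second algebraic identity you rely on is false.

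The decisive gap is at the end. You assert that the error contraction $\zFundtrf^{jk}\epsilon_{ijk}$ reduces, up to a universal constant, to $\pm(\trzd{\outric_\nu}{\zFundtrf})_i$, justified by the claim that ``only the $\outric$--part of the Ricci decomposition of $\outrc$ survives this particular contraction.'' That is not true once $n\ge3$. Inserting the Ricci decomposition of the ambient curvature into, say, $\outrc_{ijk\nu}$ and using $\outg*_{i\nu}=0$ and $\tr\zFundtrf=0$ gives
\[
 \zFundtrf^{jk}\,\outrc_{ijk\nu}
 \;=\; \zFundtrf^{jk}\,\overline W_{ijk\nu}\;+\;\tfrac1{n-1}\,\zFundtrf^{j}_{\ i}\,\outric_{j\nu},
\]
where $\overline W$ is the ambient Weyl tensor. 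The Weyl piece $\zFundtrf^{jk}\overline W_{ijk\nu}$ is generically a nonzero one--form --- the trace--freeness of $\overline W$ does not kill a contraction of two of its slots against a trace--free symmetric $\zFundtrf$. The same happens for $\zFundtrf^{jk}\outrc_{kij\nu}$, and the two Weyl contributions do not cancel; one can check $\zFundtrf^{jk}(\overline W_{kij\nu}-\overline W_{ijk\nu})=-2\,\zFundtrf^{jk}\overline W_{ijk\nu}$. So for $n\ge3$ your Young step produces, in addition to $\c|\trzd{\outric_\nu}{\zFundtrf}|_{\g}^2$, an irreducible term $\c\,|\zFundtrf^{jk}\overline W_{\cdot jk\nu}|_{\g}^2$ that the stated error cannot absorb. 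Only for $n=2$ --- the ambient is three--dimensional, $\overline W\equiv0$ --- does your argument close, and the lemma is stated for all $n$. A second, independent, problem is the identity $\sum_{j\ne i}(\lambda_j-\lambda_i)^2=n\sum_\ell\lambda_\ell^2$. With $\sum_i\lambda_i=0$ the single sum over $j$ equals $\sum_\ell\lambda_\ell^2+n\lambda_i^2$, which depends on $i$; summed also over $i$ it equals $2n\sum_\ell\lambda_\ell^2$. In neither reading is it $n\sum_\ell\lambda_\ell^2$, so the extraction of the sharp constant $(n+2)/n$ needs a different argument (the paper sidesteps this entirely through the $B$--tensor count). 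Finally, as you anticipate, $\sum_j\widetilde T_{ijj}=\tfrac23\outric_{\nu i}\neq0$, so the Kato estimate for $\widetilde T$ must itself carry a Ricci remainder through the Young absorption rather than being applied in the exactly trace--free form.
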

\begin{proof}\def\vzFundtrf{\left|\zFundtrf\right|_{\hspace{-.05em}\g}}
Fix normal coordinates around an arbitrary point $p\in\M$ such that $\zFund(p)$ is diagonalized. Doing the same calculation as in \cite{schoen1975curvature,metzger2004dissertation}, we see
\begin{align*}
 (\trtr{\levi\zFundtrf}{\levi\zFundtrf}-\trtr{\levi\vzFundtrf}{\levi\vzFundtrf})\,\vzFundtrf^2
 &{}=	\vzFundtrf^2\,\trtr{\levi\zFundtrf}{\levi\zFundtrf}-\frac14\trtr{\levi\vzFundtrf^2}{\levi\vzFundtrf^2} \\
 &{}= \frac12\Big(\zFundtrf_{ij}\,\levi*_k\,\zFundtrf_{lm} - \zFundtrf_{lm}\,\levi*_k\,\zFundtrf_{ij}\Big)(\zFundtrf^{ij}\,\levi*^k\,\zFundtrf^{lm} - \zFundtrf^{lm}\,\levi*^k\,\zFundtrf^{ij}) \\
 &{}=: \frac12B_{ijklm}\,B^{ijklm}
\end{align*}
in $p$. Now, we optimize the cited calculations by noting $B_{ijklm}=B_{jiklm}=B_{ijkml}$ and $B_{ijkij}=0$ for every $i,j,k\in\{1,\dots,n\}$ and that the index sets
\begin{align*}
 \boldsymbol B_1 :={}& \{ (i,j,i,j,k) \}_{ijk=1}^n, & 
 \boldsymbol B_2 :={}& \{ (i,j,k,j,k) \}_{ijk=1}^n, \\
 \boldsymbol B_3 :={}& \{ (i,j,k,k,i) \}_{ijk=1}^n, &
 \boldsymbol B_4 :={}& \{ (i,j,j,k,i) \}_{ijk=1}^n
\end{align*}
pairwise intersect in a set $\boldsymbol B$ of indices with $B_{ijklm}=0$ for $(i,j,k,l,m)\in\boldsymbol B$. In particular, we see in $p$ for every $\boldsymbol i\in\{1,\dots,4\}$---\emph{without} using the Einstein summation convention---
\begin{align*}
 \sum_{I\in\boldsymbol B_{\boldsymbol i}} B_IB^I
 ={}&		\sum_{I\in\boldsymbol B_1} B_IB^I
 =			\sum_{i,j,k=1}^n (\zFundtrf_{ij}\,\levi*_i\,\zFundtrf_{jk} - \zFundtrf_{jk}\,\levi*_i\,\zFundtrf_{ij})^2 \\
 ={}&		\sum_{i,k=1}^n (\zFundtrf_{ii}\,\levi*_i\,\zFundtrf_{ik} - \zFundtrf_{ik}\,\levi*_i\,\zFundtrf_{ii})^2
 \ge		\frac1n\sum_{k=1}^n (\sum_{i}^n(\zFundtrf_{ii}\,\levi*_i\,\zFundtrf_{ik} - \zFundtrf_{ik}\,\levi*_i\,\zFundtrf_{ii}))^2 \\
 \ge{}&	\frac1n\sum_{k=1}^n (\sum_{i,j=1}^n(\zFundtrf_{ij}\,\levi*_i\,\zFundtrf_{jk} - \zFundtrf_{jk}\,\levi*_i\,\zFundtrf_{ij}))^2.
\end{align*}
Let $\delta>0$ be arbitrary and denote any constant depending on $\delta$ with $C=\Cof[n][\delta]$. Now, we rejoin the cited literature by using the Codazzi equation (and $\H\equiv\text{const}$) which implies---again \emph{not} using the Einstein summation convention---
\begin{align*}
 (\trtr{\levi\zFundtrf}{\levi\zFundtrf}-\trtr{\levi\vzFundtrf}{\levi\vzFundtrf})\,\vzFundtrf^2
 \ge{}&	\frac2n\sum_{k=1}^n(\sum_{i,j=1}^n(\frac12\levi*_k\vert\zFundtrf_{ij}\vert_{\g}^2 + \zFundtrf_{jk}\,\outrc_{\nu iji}))^2 \\
 \ge{}&	\frac2n\sum_{k=1}^n(\frac12\levi*_k\!\trtr\zFundtrf\zFundtrf - (\trzd{\outric_{\nu}}\zFundtrf)_k)^2 \\
 \ge{}&	\frac{1-\delta}{2n}\sum_{k=1}^n\vert\levi*_k\!\trtr\zFundtrf\zFundtrf\vert^2 - C\,\sum_{k=1}^n\vert\trzd{\outric_{\nu}}\zFundtrf\vert_k^2.
\end{align*}
This proves the claim.
\end{proof}
\bigskip

\bibliography{bib}
\makeatletter
\def\bibindent{10em}
\let\old@biblabel\@biblabel
\def\@biblabel#1{\old@biblabel{#1}\kern\bibindent}
\makeatother																						
\bibliographystyle{alpha}\vfill
\end{document}